\documentclass{amsart}
\usepackage[letterpaper,margin=2.2cm,includehead,headheight=12pt,headsep=14pt]{geometry}

\usepackage{amsmath,amssymb,amscd,mathtools}
\usepackage{enumitem,microtype}
\usepackage{xcolor}
\usepackage[colorlinks=true,linkcolor=blue,citecolor=green!45!black,urlcolor=blue]{hyperref}

\newtheorem{theorem}{Theorem}[section]
\newtheorem{proposition}[theorem]{Proposition}
\newtheorem{lemma}[theorem]{Lemma}
\newtheorem{corollary}[theorem]{Corollary}
\newtheorem{remark}[theorem]{Remark}

\newcommand{\C}{\mathbb C}
\newcommand{\Gm}{\mathbb G_m}
\newcommand{\Db}{\mathrm D^{\mathrm b}}
\newcommand{\Dsg}{\mathrm D_{\mathrm{sg}}}
\newcommand{\Perf}{\operatorname{Perf}}
\newcommand{\MF}{\operatorname{MF}}
\newcommand{\HH}{\operatorname{HH}}
\newcommand{\HS}{\operatorname{HS}}
\newcommand{\Jac}{\operatorname{Jac}}
\newcommand{\pretr}{\operatorname{pretr}}
\newcommand{\Spec}{\operatorname{Spec}}
\newcommand{\Hom}{\operatorname{Hom}}
\newcommand{\RHom}{\operatorname{RHom}}
\newcommand{\id}{\operatorname{id}}
\newcommand{\Mor}{\mathrm{Mor}}

\newcommand{\gr}{\mathrm{gr}}
\newcommand{\dgcat}{\mathrm{dg}}
\newcommand{\m}{\mathfrak m}
\title[Categorical Torelli via singularity category]
{Categorical Torelli theorem for (weighted) hypersurfaces\\
via singularity category}

\author{Xun Lin}
\address[Xun Lin]{School of Science and Engineering, The Chinese University of Hong Kong, Shenzhen, Shenzhen 518172, P.~R.~China}
\email{linxun@cuhk.edu.cn}
\author{Jieheng Zeng}
\address[Jieheng Zeng]{School of Mathematics and Statistics, Hunan Normal University, 15 Taozi Road, Changsha 410081, P.~R.~China}
\email{zengjh662@163.com}
\author{Shizhuo Zhang}
\address[Shizhuo Zhang]{School of Mathematics, Sun Yat-sen University, Guangzhou 510275, P.~R.~China}
\email{zhangshzh28@mail.sysu.edu.cn}

\subjclass[2020]{Primary 14F08; secondary 14J45, 14B05, 18G80}
\keywords{Categorical Torelli theorem, weighted hypersurface, Kuznetsov component,
singularity category, matrix factorization, categorical Mather--Yau theorem}

\hypersetup{
 pdftitle={Categorical Torelli theorem for (weighted) hypersurfaces via singularity category},
 pdfauthor={Xun Lin, Jieheng Zeng, and Shizhuo Zhang},
 pdfkeywords={Categorical Torelli theorem, weighted hypersurface, singularity category, matrix factorization}
}

\begin{document}

\begin{abstract}
We prove categorical Torelli theorems for ordinary and weighted hypersurfaces
using the singularity category of the completed affine cone.  For
Fano hypersurfaces we use the Kuznetsov component, whereas for
hypersurfaces of general type we use the full category of graded matrix
factorizations.  When the
degree is coprime to the sum of the weights, the internal grading shift can be
expressed in terms of the Serre functor and the cohomological shift.  Its dg
orbit is therefore intrinsic and, after taking the perfect hull, agrees with
the dg singularity category of the completed cone.  The formal singularity is
then recovered by the categorical Mather--Yau theorem.  For ordinary homogeneous equations,
the resulting formal equivalence induces a projective linear equivalence;
in the weighted setting the dual torus action determines the grading of the
Tjurina algebra and hence the weighted homogeneous equation.  The argument
gives a uniform proof for Fano hypersurfaces under the stated numerical
assumptions and proves new categorical Torelli theorems for hypersurfaces of
general type.
\end{abstract}

\maketitle

\section{Introduction}

\subsection{The categorical Torelli problem}
The bounded derived category is among the strongest homological invariants of
a smooth projective variety.  In particular, varieties with ample canonical
or anticanonical bundle can be reconstructed from their derived categories
\cite[Theorem~2.5]{BondalOrlov}.  For a Fano variety, however, the part which varies in
moduli is often contained in a proper admissible subcategory.  When a
semiorthogonal decomposition has the form
\[
 \Db(X)=\langle \mathcal{K}u(X),E_1,\ldots,E_s\rangle,
\]
with the \(E_i\) exceptional, the admissible subcategory
\(\mathcal{K}u(X)\) is called the Kuznetsov component.  The categorical
Torelli problem asks whether an
equivalence
\[
 \mathcal{K}u(X)\simeq\mathcal{K}u(X')
\]
forces \(X\cong X'\), or which additional structure on the two categories is
needed for such a conclusion.

Several forms of this question have been settled.  The cubic threefold is
reconstructed in~\cite{BernardaraMacriMehrotraStellari}, and a
categorical Torelli theorem for equivalences preserving the rotation functor
is established for a broad class of Fano hypersurfaces
in~\cite[Theorem~1.2]{Pirozhkov}.  Approaches through stability conditions,
Brill--Noether loci, and Hodge-theoretic structures may be found in
\cite{FeyzbakhshLiuZhang,JacovskisLiuZhang,JacovskisLinLiuZhang,BayerPerry}.
For hypersurfaces, Hochschild cohomology is related to the Jacobian algebra in
\cite{HuybrechtsRennemo}.  The Hochschild--Serre algebra used in
\cite{LinZhang,LinRennemoZhang} refines this relation and recovers suitable
graded pieces of the Jacobian algebra directly from the categorical data.

The purpose of the present paper is to give a different proof, based on the
singularity of the affine cone.  The ordinary and weighted theorems, both for
Fano hypersurfaces and for hypersurfaces of general type, reduce to the same local
reconstruction theorem, although they use different categories.  For a Fano hypersurface, the
Kuznetsov component is equivalent to the full graded singularity category by
\cite[Theorems~2.5(i) and~3.10]{Orlov} and
\cite[Theorem~6.13]{BFK}, whereas for a hypersurface of general
type the bounded derived category is only an admissible component of that
category~\cite[Theorem~2.5(ii) and Remark~3.12]{Orlov}.  Accordingly, for hypersurfaces
of general type the theorems below are formulated for the full graded
matrix-factorization category.  The common argument and the separate final
reconstruction steps are explained after the main results.

\subsection{Main results}
Throughout the paper the ground field is \(\C\), and \(\Gm\) denotes the
multiplicative group.  Fix positive weights
\(w_0,\ldots,w_{n+1}\), and introduce the graded polynomial ring and the sum
of the weights by
\[
 P=\C[x_0,\ldots,x_{n+1}],\qquad \deg x_i=w_i,
 \qquad W=\sum_{i=0}^{n+1}w_i.
\]
The vector space of weighted homogeneous polynomials of degree \(d\) is denoted
by \(P_d\).  Let \(\m=(x_0,\ldots,x_{n+1})\), and put
\(\widehat P=\C[[x_0,\ldots,x_{n+1}]]\).  We denote the smooth weighted
projective stack by \(\mathbb P(w_0,\ldots,w_{n+1})\), and write
\(x=(x_0,\ldots,x_{n+1})^{\mathsf t}\) for the coordinate column.  For
\(f\in P_d\), write
\[
 X_f=V(f)\subset\mathbb P(w_0,\ldots,w_{n+1}),\qquad
 R_f=P/(f),\qquad
 A_f=\widehat P/(f),
\]
where \(A_f\) is the completion of \(R_f\) at the homogeneous maximal ideal.
The presentation is called a linear cone if \(f\notin\m^2\), or equivalently
if a variable can be eliminated from the equation by a weight-preserving
change of coordinates.  We write \(\MF^{\gr,\dgcat}(P,f)\) for the dg category
of graded matrix factorizations and \(\Dsg^{\gr,\dgcat}(R_f)\) for the graded dg
singularity category; the ungraded dg singularity category of \(A_f\) is
denoted by \(\Dsg^{\dgcat}(A_f)\).  The full matrix-factorization category and
its internal grading shift will be denoted by
\[
 \mathcal C_f=\MF^{\gr,\dgcat}(P,f),\qquad \rho_f=(1).
\]
When all weights are equal to one, one has \(W=n+2\).
By an enhanced equivalence we mean a \(\C\)-linear dg Morita equivalence
between the standard smooth proper dg enhancements; the notation
\(\simeq_{\Mor}\) will always have this meaning.  For later use, the right
orthogonal to the structure sheaf is
\[
 \mathcal O_X^\perp
 =\{E\in\Db(X)\mid\RHom(\mathcal O_X,E)=0\}.
\]

For Fano hypersurfaces, we first state the result for equivalences that
intertwine the rotation functors.  Under the
equivalence in \cite[Theorems~2.5(i) and~3.10]{Orlov}, the rotation functor
\(\rho_f\) on the Kuznetsov component corresponds to the internal grading
shift \((1)\) on \(\mathcal C_f\).

\begin{theorem}
\label{thm:rotation-reconstruction}
\cite[Theorem~1.7]{LinZhang}.
Let \(f\) and \(g\) define smooth Fano hypersurfaces of the same degree
\(d\ge2\) in the same ordinary projective space.  Suppose that there is an
enhanced equivalence
\[
 \Phi:\mathcal{K}u(X_f)_{\dgcat}\simeq_{\Mor}\mathcal{K}u(X_g)_{\dgcat}
\]
together with an isomorphism of enhanced functors
\(\Phi\rho_f\cong\rho_g\Phi\).  Then \(X_f\cong X_g\) by a projective linear
transformation.  Moreover, the category and its rotation determine the
completed local isomorphism class of the vertex of the affine cone.
\end{theorem}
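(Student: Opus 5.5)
Via Orlov's equivalence \cite[Theorems~2.5(i) and~3.10]{Orlov}, upgraded to dg enhancements by \cite[Theorem~6.13]{BFK}, I identify \(\mathcal{K}u(X_f)_{\dgcat}\) with \(\mathcal C_f=\MF^{\gr,\dgcat}(P,f)\) so that the rotation \(\rho_f\) becomes the grading shift \((1)\). The pair \((\Phi,\Phi\rho_f\cong\rho_g\Phi)\) then gives a Morita equivalence \(\mathcal C_f\simeq_{\Mor}\mathcal C_g\) intertwining the two \(\mathbb Z\)-actions generated by \((1)\). The plan has three steps: pass to the dg orbit category, recover the formal singularity, and then recover the homogeneous equation.

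\textbf{Orbit category.} I form the dg orbit category \(\mathcal C_f/(1)\) and its pretriangulated (perfect) hull. Because the equivalence is compatible with the shift as an enhanced functor, it descends to an equivalence \(\Perf(\mathcal C_f/(1))\simeq_{\Mor}\Perf(\mathcal C_g/(1))\). I then compare with the ungraded theory. Forgetting the grading gives a dg functor \(\mathcal C_f/(1)\to \MF^{\dgcat}(P,f)\), which is fully faithful on morphism complexes, since \(\bigoplus_k\Hom(E,F(k))\) is the ungraded Hom. Completion at \(\m\) gives a further map to \(\MF^{\dgcat}(\widehat P,f)\simeq\Dsg^{\dgcat}(A_f)\). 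Since \(f\) is homogeneous with an isolated singularity, the singularity category of \(R_f\) is supported at the vertex. So completion is an equivalence up to idempotent completion, and the perfect hull of the orbit category agrees with \(\Dsg^{\dgcat}(A_f)\). This step is the main technical obstacle. One needs Keller's orbit-category formalism together with a generation argument: the graded residue field \(k^{\mathrm{stab}}\) and its twists generate, and the image of \(k^{\mathrm{stab}}\) classically generates \(\Dsg(A_f)\) by Dyckerhoff. Note that in this rotation-compatible setting no coprimality hypothesis on \(d\) and \(W=n+2\) is needed, because the shift is transported by assumption rather than recovered from the Serre functor.

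\textbf{Formal singularity.} From \(\Dsg^{\dgcat}(A_f)\simeq_{\Mor}\Dsg^{\dgcat}(A_g)\) I apply the categorical Mather--Yau theorem. Hochschild cohomology of \(\MF(\widehat P,f)\) is the Milnor/Tjurina algebra, concentrated in even degree for an isolated singularity, and \(f\) is quasi-homogeneous, so Milnor and Tjurina algebras coincide. This gives \(\widehat P/(f,\partial f)\cong \widehat P/(g,\partial g)\) as algebras. Mather--Yau then yields a formal isomorphism \(A_f\cong A_g\), which proves the final sentence of the theorem. Both hypersurfaces sit in the same \(\mathbb P^{n+1}\) and \(d\ge 2\), so neither cone is a linear cone. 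Hence the embedding dimension is \(n+2\) for both and the formal isomorphism comes from an automorphism \(\varphi\) of \(\widehat P\) with \(\varphi(f)=u\cdot g\), \(u\) a unit.

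\textbf{Homogeneous equation.} Take the linear part \(L\in GL_{n+2}\) of \(\varphi\), and compare lowest-order terms in \(\varphi(f)=ug\), all of degree \(d\). This gives \(f(Lx)=u(0)\,g(x)\). So \(X_f\) and \(X_g\) differ by a projective linear transformation.
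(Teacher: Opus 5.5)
Your proposal is correct and follows the paper's proof. You transport to $\mathcal C_f$ via Orlov/BFK with $\rho_f\leftrightarrow(1)$, pass to the perfect hull of the dg orbit, identify it with $\Dsg^{\dgcat}(A_f)$ by forgetting the grading, completing and generating by the residue field, apply the categorical Mather--Yau theorem, and compare degree-$d$ terms in $\varphi(f)=ug$; the only difference is that you forget into ungraded matrix factorizations over $P$, where full faithfulness on the orbit is a direct chain-level identification, whereas the paper forgets into $\Dsg^{\dgcat}(R_f)$ and cites \cite[Lemma~A.7]{KMVdB}.
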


The reconstruction statement in Theorem~\ref{thm:rotation-reconstruction}
is already known.  It is established in
\cite[Theorem~1.2]{Pirozhkov} for \(d>3\), or \(d=3\) and
\(n>3\), where \(n\) is the dimension of the hypersurface.
The statement for all smooth Fano hypersurfaces is proved in
\cite[Theorem~1.7]{LinZhang}.  We give a new proof using the singularity
category of the completed affine cone.

\begin{corollary}
\label{thm:fano-ordinary}
Let \(f,g\in\C[x_0,\ldots,x_{n+1}]_d\) define smooth Fano hypersurfaces
\(X_f,X_g\subset\mathbb P^{n+1}\), where \(2\le d<n+2\).  Assume
\(\gcd(d,n+2)=1\).  Every enhanced equivalence
\[
 \mathcal{K}u(X_f)_{\dgcat}\simeq_{\Mor}\mathcal{K}u(X_g)_{\dgcat}
\]
is induced at the level of equations by a projective linear transformation:
there are \(L\in\operatorname{GL}_{n+2}(\C)\) and \(c\in\C^*\) such that
\(g(Lx)=cf(x)\).  Consequently \(X_f\cong X_g\).
\end{corollary}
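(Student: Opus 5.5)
We plan to reduce the corollary to Theorem~\ref{thm:rotation-reconstruction}, or more precisely to the local reconstruction behind it. The task is to show that, under \(\gcd(d,n+2)=1\), an arbitrary enhanced equivalence automatically intertwines the rotation functors up to a harmless twist. The input is the known relation between the Serre functor of the Kuznetsov component and the rotation. Via Orlov's equivalence \(\mathcal{K}u(X_f)\simeq \mathcal C_f\), the Serre functor of \(\mathcal C_f\) is \(S\cong (d-(n+2))[n]\), i.e.\ \(S\cong\rho_f^{\,d-n-2}[n]\), up to the standard bookkeeping of the shift. Moreover \((d)\cong[2]\) on graded matrix factorizations, i.e.\ \(\rho_f^{\,d}\cong[2]\).

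Since \(\gcd(d,n+2)=1\), choose integers \(a,b\) with \(a(d-n-2)+bd=1\). Then
\[
 \rho_f\cong \rho_f^{\,a(d-n-2)}\rho_f^{\,bd}\cong S^{a}[-an+2b],
\]
so \(\rho_f\) is expressed purely in terms of the Serre functor and the shift. These isomorphisms hold at the enhanced level, since the Serre functor of a smooth proper dg category is given by the dual diagonal bimodule and is intrinsic. Any Morita equivalence \(\Phi\) therefore satisfies \(\Phi S_f\cong S_g\Phi\) and commutes with shifts. Hence \(\Phi\rho_f\cong\rho_g\Phi\) as enhanced functors, and we may invoke Theorem~\ref{thm:rotation-reconstruction}.

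To obtain the sharper conclusion \(g(Lx)=cf(x)\), we go through the singularity category. The orbit category of \(\mathcal C_f\) by \(\rho_f\), after taking the perfect (idempotent) hull, is \(\Dsg^{\dgcat}(A_f)\). The compatibility of \(\Phi\) with \(\rho\) gives a Morita equivalence \(\Dsg^{\dgcat}(A_f)\simeq\Dsg^{\dgcat}(A_g)\). The categorical Mather--Yau theorem then yields \(\widehat P/(f)\cong\widehat P/(g)\), i.e.\ a formal coordinate change \(\phi\) with \(g\circ\phi=u\cdot f\) for a unit \(u\); here we use that \(d\ge2\) and smoothness give isolated singularities, and that no linear cone occurs. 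Finally, since \(f\) and \(g\) are homogeneous of degree \(d\ge 2\), we take the linear part \(L\) of \(\phi\), which is invertible, and compare lowest-degree terms: \(g(Lx)=u(0)f(x)\), with \(c=u(0)\).

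The main obstacle is the middle step: the orbit-category step needs a genuine enhanced isomorphism \(\Phi\rho_f\cong\rho_g\Phi\), not merely an isomorphism on the homotopy category. This requires verifying that Orlov's identification of the Serre functor with \(\rho^{d-n-2}[n]\) and the relation \(\rho^d\cong[2]\) hold as dg bimodule isomorphisms. We would first try computing both sides as bimodules over \(\mathcal C_f\), using the explicit dualizing bimodule of graded matrix factorizations, following Dyckerhoff and BFK.
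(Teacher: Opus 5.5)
Your proposal is correct and follows essentially the same route as the paper. The B\'ezout expression \(\rho_f\cong S^{a}[2b-an]\), which is the paper's \(\rho_f\cong S_f^{-r}[rn+2s]\) from Lemma~\ref{lem:fano-intrinsic} with \(r=-a\), \(s=b\), makes every enhanced equivalence intertwine the rotations. After that, the orbit--completion comparison, the categorical Mather--Yau theorem (Theorem~\ref{thm:hua-keller}) and the lowest-degree comparison of Lemma~\ref{lem:ordinary-globalization} give \(g(Lx)=cf(x)\). The enhanced-level identities you flag as the main obstacle are exactly what the paper records in Proposition~\ref{prop:fano-periodicity-serre}, using two-periodicity of graded factorizations and graded Auslander--Reiten duality.
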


Weighted hypersurfaces are regarded as hypersurfaces in the smooth weighted
projective stack.  Thus quasismoothness refers to the punctured affine cone;
the associated coarse weighted projective variety may still be singular.

\begin{theorem}\label{thm:fano-weighted}
Fix positive weights \(w_0,\ldots,w_{n+1}\) and a degree \(d<W\) with
\(\gcd(d,W)=1\).  Let \(f,g\in P_d\) define quasismooth hypersurface stacks
\(X_f,X_g\subset\mathbb P(w_0,\ldots,w_{n+1})\), and assume that neither
presentation is a linear cone.  Every enhanced equivalence
\[
 \mathcal{K}u(X_f)_{\dgcat}\simeq_{\Mor}\mathcal{K}u(X_g)_{\dgcat}
\]
implies that a weight-preserving polynomial automorphism sends one equation to
a nonzero scalar multiple of the other.  Hence \(X_f\cong X_g\) as weighted
hypersurface stacks, and their coarse spaces are isomorphic.
\end{theorem}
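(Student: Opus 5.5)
We argue in four steps, following the strategy announced in the abstract.

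\emph{Step 1: pass to the graded singularity category.} Since \(d<W\), the results of \cite[Theorems~2.5(i) and~3.10]{Orlov} and \cite[Theorem~6.13]{BFK}, applied in the weighted (stacky) setting, identify \(\mathcal{K}u(X_f)_{\dgcat}\) with \(\mathcal C_f=\MF^{\gr,\dgcat}(P,f)\simeq\Dsg^{\gr,\dgcat}(R_f)\), compatibly with enhancements. The given equivalence thus becomes a Morita equivalence \(\Phi:\mathcal C_f\simeq_{\Mor}\mathcal C_g\). We do not assume that \(\Phi\) intertwines \(\rho_f\) and \(\rho_g\).

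\emph{Step 2: make the grading shift intrinsic.} In \(\mathcal C_f\) the Serre functor is \(S_f\cong(d-W)[n]\), up to the standard shift convention; one checks this via Orlov's description of the Serre functor on the Kuznetsov component, or directly via graded Auslander--Reiten duality. There is also the identity \((d)\cong[2]\) on graded matrix factorizations. Because \(\gcd(d,W)=1\), we can choose integers \(a,b\) with \(a(d-W)+bd=1\). Then
\[
\rho_f=(1)\cong S_f^{a}\,[-an-2b].
\]
Equivalences commute with Serre functors and with shifts, so \(\Phi\rho_f\cong\rho_g\Phi\) as enhanced functors. Here we must use that Serre functors on smooth proper dg categories are canonically enhanced (via the inverse dualizing bimodule), which makes the isomorphism hold at the dg level.

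\emph{Step 3: take the orbit category.} Form the dg orbit category \(\mathcal C_f/\rho_f^{\mathbb Z}\) and its perfect (pretriangulated, idempotent-complete) hull. Using Keller's result on orbit categories together with Tabuada's formulation, this hull agrees with \(\Dsg^{\dgcat}(R_f)\). After idempotent completion it agrees with \(\Dsg^{\dgcat}(A_f)\); this uses isolatedness of the singularity, which follows from quasismoothness, together with Orlov's completion theorem. Since \(\Phi\) intertwines \(\rho\) as enhanced functors, it induces \(\Dsg^{\dgcat}(A_f)\simeq_{\Mor}\Dsg^{\dgcat}(A_g)\). I expect this to be the main obstacle: the identification of the graded orbit hull with the ungraded completed singularity category must hold at the dg level, and the induced functor must be well defined from the \(\rho\)-equivariance data.

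\emph{Step 4: local and then graded reconstruction.} The categorical Mather--Yau theorem (Hua--Keller), valid for isolated hypersurface singularities with \(\dim\ge\) the required bound, gives \(A_f\cong A_g\). This requires the non-linear-cone hypothesis, since \(f,g\in\m^2\) is needed for the Tjurina algebra to determine the singularity. Hence there is a formal coordinate change \(\phi\) and a unit \(u\) with \(g\circ\phi=u f\).

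To recover the weighted homogeneous equations, the dual \(\Gm\)-action, that is, the grading on the Tjurina algebra \(\widehat P/(f,\partial f)\), is the Euler grading. Weighted homogeneous isolated singularities have a unique good \(\Gm\)-action up to conjugacy in \(\operatorname{Aut}A_f\), by a maximal-torus argument. Conjugating \(\phi\) accordingly, we may take \(\phi\) to be \(\Gm\)-equivariant, hence a weight-preserving polynomial automorphism. Then \(u\) is homogeneous of degree \(0\), hence a constant \(c\in\C^*\). This gives \(g(\phi(x))=cf(x)\), and therefore \(X_f\cong X_g\) as stacks, and their coarse spaces are isomorphic.
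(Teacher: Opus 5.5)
There is a genuine gap in Step 4. Your Steps 1--3 follow the paper's route, with two minor corrections. First, there is a sign slip: from $a(d-W)+bd=1$ one gets $\rho_f\cong S_f^{a}[2b-an]$, not $S_f^{a}[-an-2b]$. Second, Step 3 cites the wrong input: what is needed is full faithfulness of the forgetful functor on the orbit and generation by the residue field from \cite[Appendix~A]{KMVdB}, plus completion, as in Proposition~\ref{prop:orbit-completion}.

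In Step 4, Hua--Keller applied to an ungraded equivalence gives only an ungraded isomorphism $A_f\cong A_g$. You upgrade it by claiming that a weighted homogeneous isolated singularity has a unique good $\Gm$-action up to conjugacy. This is false under the hypotheses of the theorem, because ``not a linear cone'' only means $f\in\m^2$, so quadratic terms occur. For example, take weights $(1,1,1,2)$ and the quasismooth $f=x_0x_3+x_1^3+x_2^3$, so $d=3$ and $W=5$. The actions with weights $(2,2,2,4)$ and $(1,2,2,5)$ are both good and both give $f$ degree $6$. They are not conjugate in $\operatorname{Aut}(A_f)$, since they act on $\m_f/\m_f^2$ with different weights.

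What you actually need is weaker: the Euler action of $f$ and the transport of the Euler action of $g$, which have the same cotangent weights, are conjugate. A maximal-torus argument does not give this. Even granting conjugacy of maximal tori in the pro-algebraic group $\operatorname{Aut}(A_f)$, which is itself a nontrivial theorem, it only places both one-parameter subgroups in one torus, and distinct cocharacters of a torus need not be conjugate. You would still need an equivariant linearization and splitting lemma, a Saito-type uniqueness of weights on the part of the equation lying in $\m^3$, and a Weyl-group argument on the Morse part. None of these is supplied.

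The paper avoids this problem by never forgetting the grading.
\begin{enumerate}
\item The orbit functor preserves the summand index $j$, so it commutes with the dual action $\chi_f$. Proposition~\ref{prop:equivariant-orbit} identifies $\chi_f$ with the Euler action on the completed cone. Hence the induced equivalence $\Dsg^{\dgcat}(A_f)\simeq_{\Mor}\Dsg^{\dgcat}(A_g)$ is Euler-equivariant.
\item The isomorphism $\kappa_f\colon T_f\to\HH^0(\Dsg^{\dgcat}(A_f))$ is natural under coordinate automorphisms (Lemma~\ref{lem:equivariant-HK}). This makes the induced isomorphism $T_f\cong T_g$ an isomorphism of graded algebras.
\item The Euler identity identifies $T_f$ with the graded Milnor algebra. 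Then \cite[Corollary~3]{Ahmed}, in the form of Proposition~\ref{prop:weighted-globalization}, produces the weight-preserving automorphism directly.
\end{enumerate}
You mention the dual action in Step 4, but you never show that your local equivalence respects it, so it cannot replace this step.
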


The specialization \(d=W-1\) has an additional interpretation.  Perry's
conjecture, in the form recorded and studied in
\cite{LinZhangOrthogonal}, predicts that a smooth Fano variety \(X\) is
determined by the enhanced right orthogonal \(\mathcal O_X^\perp\).  For an
index-one Fano hypersurface, the standard exceptional block has length
\(W-d=1\), and the semiorthogonal decomposition becomes
\[
 \Db(X)=\langle\mathcal{K}u(X),\mathcal O_X\rangle,
 \qquad \mathcal{K}u(X)=\mathcal O_X^\perp.
\]
Consequently, categorical Torelli for an index-one Fano hypersurface is
precisely the corresponding instance of Perry's conjecture.
Theorem~\ref{thm:fano-weighted} therefore proves the analogous statement for every quasismooth weighted
index-one hypersurface stack whose defining equation lies in \(\m^2\).  The next two corollaries verify the conjecture for
ordinary index-one hypersurfaces and for the indicated index-one double covers.

\begin{corollary}\label{cor:index-one}
Let \(X_f,X_g\subset\mathbb P^{n+1}\) be smooth hypersurfaces of degree
\(n+1\).  Then
\[
 \mathcal O_{X_f}^{\perp}\simeq_{\Mor}\mathcal O_{X_g}^{\perp}
 \quad\Longrightarrow\quad X_f\cong X_g.
\]
The orthogonal category also determines the completed cone singularity
\(A_f\).
\end{corollary}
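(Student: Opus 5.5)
This is a specialization of Corollary~\ref{thm:fano-ordinary}, or equivalently of Theorem~\ref{thm:fano-weighted}, with all weights equal to one and \(d=n+1\). The plan is to check that the hypotheses of those results hold and then translate their conclusions.

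First, with all weights one we have \(W=n+2\). The degree \(d=n+1\) satisfies \(d<W\), and \(\gcd(n+1,n+2)=1\) holds automatically. For \(d\ge2\), that is \(n\ge1\), the hypersurface is Fano. For \(d\ge2\) the equation \(f\) is homogeneous of degree at least two, so \(f\in\m^2\) and neither presentation is a linear cone. The degenerate case \(n=0\), \(d=1\) is excluded by the requirement \(d\ge2\) in Corollary~\ref{thm:fano-ordinary}; a degree-one hypersurface is a hyperplane and every such \(X\) is isomorphic anyway, so this case is trivial if one insists on including it.

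Second, identify the categories. The exceptional block \(\mathcal O_X,\mathcal O_X(1),\ldots,\mathcal O_X(W-d-1)\) has length \(W-d=1\), so the decomposition reads \(\Db(X)=\langle\mathcal{K}u(X),\mathcal O_X\rangle\) and \(\mathcal{K}u(X)=\mathcal O_X^\perp\). This is an equality of admissible subcategories, so it persists for the induced dg enhancements. A Morita equivalence \(\mathcal O_{X_f}^\perp\simeq_{\Mor}\mathcal O_{X_g}^\perp\) is therefore an enhanced equivalence of Kuznetsov components. Corollary~\ref{thm:fano-ordinary} then gives \(L\in\operatorname{GL}_{n+2}(\C)\) and \(c\in\C^*\) with \(g(Lx)=cf(x)\), hence \(X_f\cong X_g\).

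For the last sentence of the statement, I would go through the intermediate step of the argument behind Corollary~\ref{thm:fano-ordinary}. Since \(\gcd(d,W)=1\), the grading shift \((1)\) on \(\mathcal C_f\simeq\mathcal{K}u(X_f)\) is expressible through the Serre functor and the cohomological shift. Its dg orbit category is therefore intrinsic, and its perfect hull is \(\Dsg^{\dgcat}(A_f)\). The categorical Mather--Yau theorem then recovers \(A_f\) up to isomorphism. Alternatively, and more cheaply, \(g(Lx)=cf(x)\) directly yields \(A_f\cong A_g\) through the linear change of coordinates on \(\widehat P\).

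The only point needing care is the claim that \(\mathcal O_X^\perp\) with its enhancement coincides with the enhanced Kuznetsov component used in the earlier results. I expect this to be routine, since both are full dg subcategories of the standard enhancement of \(\Db(X)\) with the same objects.
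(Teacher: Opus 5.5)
Your proposal is correct and follows the paper's proof: for index one the paper likewise identifies $\mathcal{K}u(X)=\mathcal O_X^\perp$, uses $\rho_f\simeq S^{-1}[n]$ (the B\'ezout case $r=1$, $s=0$, which is what your appeal to Corollary~\ref{thm:fano-ordinary} unwinds to) and then the rotation-reconstruction theorem, and obtains $A_f\cong A_g$ from the orbit construction. The one addition in the paper is a stronger reading of ``determines $A_f$'': via Kalck's theorem (Theorem~\ref{thm:kalck}), any complete local commutative Noetherian $\C$-algebra of Krull dimension $n+1$ whose dg singularity category is Morita equivalent to the orbit hull is isomorphic to $A_f$, not merely another cone $A_g$ or another isolated hypersurface singularity in the same number of variables.
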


\begin{corollary}\label{cor:double-covers}
Let \(n\ge2\), and let \(X_f,X_g\) be smooth double covers of \(\mathbb P^n\)
branched along smooth hypersurfaces of degree \(2n\).  An enhanced equivalence
of their Kuznetsov components implies \(X_f\cong X_g\) and identifies their
branch divisors by a projective linear transformation.
\end{corollary}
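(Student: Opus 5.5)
We realize the double covers as weighted hypersurfaces and then apply Theorem~\ref{thm:fano-weighted}. A smooth double cover \(X\to\mathbb P^n\) branched along a smooth hypersurface \(B=V(h)\) of degree \(2n\) is the hypersurface
\[
 X_f=V(f)\subset\mathbb P(1,\ldots,1,n),\qquad f=y^2-h(x_0,\ldots,x_n),
\]
where there are \(n+1\) weights equal to \(1\) and \(\deg y=n\). Thus \(n+1\) here plays the role of the paper's \(n+1\), and
\[
 W=(n+1)+n=2n+1,\qquad d=2n.
\]
Hence \(d=W-1<W\), so \(X_f\) is index-one Fano, and \(\gcd(2n,2n+1)=1\). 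The stack \(\mathbb P(1,\ldots,1,n)\) has a nontrivial stabilizer only at the point \([0:\cdots:0:1]\). That point does not lie on \(X_f\), since \(f(0,\ldots,0,1)=1\neq0\). So the hypersurface stack is the smooth variety \(X\) itself, and its Kuznetsov component is \(\mathcal O_X^\perp\), as in the index-one discussion above.

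Next we verify the hypotheses of Theorem~\ref{thm:fano-weighted}. Quasismoothness means that the partials \(2y\) and \(\partial_i h\) have no common zero on the punctured cone. A common zero has \(y=0\) and \(\nabla h=0\), and since \(\deg h\ge 1\) the Euler relation forces \(h=0\). Smoothness of \(B\) then leaves only the origin. For the non-linear-cone condition: \(f\in\m^2\) because \(y^2\in\m^2\) and \(h\) has degree \(2n\ge4\). No weight-preserving coordinate change alters whether \(f\in\m^2\), since such changes preserve \(\m\). Both presentations therefore satisfy the hypotheses. Theorem~\ref{thm:fano-weighted} supplies a weight-preserving automorphism \(\sigma\) of \(P=\C[x_0,\ldots,x_n,y]\) and \(c\in\C^*\) with \(g\circ\sigma=c f\). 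In particular \(X_f\cong X_g\).

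The remaining, and mildly delicate, step is to extract the projective linear identification of the branch divisors. A weight-preserving automorphism has the form
\[
 x\mapsto Lx,\qquad y\mapsto a y+q(x),
\]
with \(L\in\operatorname{GL}_{n+1}(\C)\), \(a\in\C^*\), and \(q\) homogeneous of degree \(n\) in \(x\). For \(n\ge2\) the variable \(y\) cannot enter the \(x_i\) because \(\deg y=n>1\), and \(y\) can only appear linearly in its own image. Write \(g=y^2-h'\). Then
\[
 g\circ\sigma=(ay+q)^2-h'(Lx).
\]
Setting this equal to \(c(y^2-h)\) and comparing the coefficients of \(y\) gives \(2aq=0\), so \(q=0\). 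Comparing the \(y^2\) coefficients gives \(a^2=c\). The \(y\)-free terms then give \(h'(Lx)=c\,h(x)\). Thus \(L\) is a projective linear transformation carrying \(B_f\) to \(B_g\), and it is compatible with the covers.

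I expect the main obstacle to be only bookkeeping: confirming that the double cover is genuinely the weighted hypersurface stack, i.e.\ that it avoids the stacky point, so that the Kuznetsov component of the variety matches the one in Theorem~\ref{thm:fano-weighted}. The rest is a direct application of that theorem.
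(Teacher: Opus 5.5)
Your proposal is correct and follows the paper's proof. You realize the covers as \(y^2-h(x)=0\) in \(\mathbb P(1^{n+1},n)\) with \(W=2n+1\), \(d=2n\), apply Theorem~\ref{thm:fano-weighted}, and compare the \(y^2\), \(y\), and \(y\)-free coefficients of \((ay+q)^2-h'(Lx)=c(y^2-h)\) to obtain \(q=0\) and \(h'(Lx)=c\,h(x)\); your explicit checks of quasismoothness, of \(f\in\m^2\), and that \(X_f\) misses the \(\mu_n\)-point \([0:\cdots:0:1]\) (so the stack is the variety and its Kuznetsov component is \(\mathcal O_X^\perp\)) are details the paper leaves implicit.
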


For hypersurfaces of general type, we formulate the result for the full
category \(\mathcal C_f\).

\begin{theorem}
\label{thm:general-ordinary}
Let \(d>n+2\), assume \(\gcd(d,n+2)=1\), and let
\(f,g\in\C[x_0,\ldots,x_{n+1}]_d\) define smooth hypersurfaces
\(X_f,X_g\subset\mathbb P^{n+1}\).  If
\[
 \MF^{\gr,\dgcat}(\C[x_0,\ldots,x_{n+1}],f)
 \simeq_{\Mor}
 \MF^{\gr,\dgcat}(\C[x_0,\ldots,x_{n+1}],g),
\]
then there are \(L\in\operatorname{GL}_{n+2}(\C)\) and \(c\in\C^*\) such
that \(g(Lx)=cf(x)\).  Consequently \(X_f\cong X_g\) as polarized varieties.
\end{theorem}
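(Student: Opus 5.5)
The approach follows the outline in the abstract: express the grading shift intrinsically, identify its orbit category with the dg singularity category of the completed cone, recover the formal singularity by the categorical Mather--Yau theorem, and then pass back to homogeneous equations.

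\emph{Step 1: the grading shift is intrinsic.} In \(\mathcal C_f=\MF^{\gr,\dgcat}(P,f)\), with all weights equal to one, the Serre functor is
\[
 S_{\mathcal C_f}\cong (d-n-2)[n].
\]
This is the standard graded Serre duality for graded matrix factorizations, twisting by the Gorenstein parameter \(a=W-d\), together with the relation \((d)\cong[2]\).

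Since \(\gcd(d,n+2)=1\), we also have \(\gcd(d,d-n-2)=1\). Choose integers \(a,b\) with \(a(d-n-2)+bd=1\). Then
\[
 (1)\cong S^{a}\circ[2b-an]
\]
as enhanced autoequivalences. Any enhanced Morita equivalence \(\Phi\colon\mathcal C_f\to\mathcal C_g\) commutes with Serre functors and with shifts, so it intertwines \(\rho_f=(1)\) with \(\rho_g\), up to isomorphism of enhanced functors. Here \(d\) is recovered from the category via \(S^{d}\cong[nd+2(d-n-2)]\); alternatively, it is fixed because \(f\) and \(g\) have the same degree.

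\emph{Step 2: pass to the completed cone.} Take the dg orbit category \(\mathcal C_f/\rho_f^{\mathbb Z}\) and its perfect (idempotent-complete pretriangulated) hull. By the standard comparison of graded and ungraded matrix factorizations (Keller--Murfet--Van den Bergh / Tabuada type results), this hull is Morita equivalent to \(\MF^{\dgcat}(\widehat P,f)\simeq\Dsg^{\dgcat}(A_f)\). This step uses that \(f\) has an isolated singularity, by smoothness, so that idempotent completion of the ungraded category suffices.

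By Step 1, \(\Phi\) induces a Morita equivalence of orbit categories, hence \(\Dsg^{\dgcat}(A_f)\simeq_{\Mor}\Dsg^{\dgcat}(A_g)\).

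\emph{Step 3: Mather--Yau and linearization.} The categorical Mather--Yau theorem (Hua--Keller) recovers the isolated hypersurface singularity from its dg singularity category. Since \(d\ge2\), neither cone is linear. So \(A_f\cong A_g\), i.e.\ there is a formal coordinate change \(\varphi\) and a unit \(u\) with \(g(\varphi(x))=u\,f(x)\).

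Write \(L\) for the linear part of \(\varphi\), which is invertible, and put \(c=u(0)\). Comparing lowest-degree terms in \(g(\varphi(x))=u\,f(x)\): both \(f\) and \(g\) are homogeneous of degree \(d\), so the degree-\(d\) part of the left side is \(g(Lx)\), and the degree-\(d\) part of the right side is \(c f(x)\). Hence \(g(Lx)=cf(x)\). This gives a projective linear isomorphism \(X_f\cong X_g\) preserving \(\mathcal O(1)\).

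\emph{Main obstacle.} The hard step is Step 2. One must show precisely that the perfect hull of the dg orbit category by \((1)\) is Morita equivalent to the completed ungraded singularity category, not merely the uncompleted \(\Dsg(R_f)\). This needs two facts: the morphism spaces of the orbit category compute \(\bigoplus_i\Hom(E,F(i))\), which after completion agree with ungraded Hom in \(\MF(\widehat P,f)\); and every object of \(\MF(\widehat P,f)\) is a summand of one coming from a graded object. The second fact I would obtain from generation by \(k^{\mathrm{stab}}\), following Dyckerhoff.

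A secondary point is checking that \(\Phi\rho_f\cong\rho_g\Phi\) holds as an isomorphism of enhanced functors, so that the orbit construction is functorial. This should follow from the uniqueness of the enhanced Serre functor.
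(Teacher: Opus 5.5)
Your proposal is correct and follows the paper's proof essentially step for step: the B\'ezout expression of \((1)\) through \(S_{\mathcal C_f}\) and \([1]\) (Proposition~\ref{prop:general-serre}), the orbit-to-completed-cone comparison via the Hom formula and generation by the residue field (Proposition~\ref{prop:orbit-completion}, Corollary~\ref{cor:general-local}), the categorical Mather--Yau theorem, and comparison of degree-\(d\) terms (Lemma~\ref{lem:ordinary-globalization}). One phrasing in your ``main obstacle'' should be corrected: you only need the image to classically generate \(\Dsg(A_f)\) under cones, shifts and summands, which is what generation by \(k^{\mathrm{stab}}\) gives and which suffices because you pass to the perfect hull; the stronger claim that every object of \(\MF(\widehat P,f)\) is a summand of a graded one is false in general (gradable modules are closed under summands, yet the completed cone over a plane quartic, which lies in the range of the theorem, has non-gradable rank-one reflexive modules), and such objects arise only as cones of orbit morphisms of mixed internal degree.
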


\begin{theorem}
\label{thm:general-weighted}
Fix the weights and a degree \(d>W\) satisfying \(\gcd(d,W)=1\).  Let
\(f,g\in P_d\) define quasismooth hypersurface stacks in
\(\mathbb P(w_0,\ldots,w_{n+1})\), and assume that neither presentation is a
linear cone.  If
\[
 \MF^{\gr,\dgcat}(P,f)\simeq_{\Mor}\MF^{\gr,\dgcat}(P,g),
\]
then a weight-preserving polynomial automorphism carries \(g\) to a nonzero
scalar multiple of \(f\).  Hence \(X_f\cong X_g\) as weighted hypersurface
stacks, and their coarse spaces are isomorphic.
\end{theorem}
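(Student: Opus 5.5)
The plan is to pass from the graded matrix-factorization category to the ungraded singularity category of the completed cone, and then apply the categorical Mather--Yau theorem together with a reconstruction of the grading.

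\emph{Step 1: the grading shift is intrinsic.} In \(\mathcal C_f=\MF^{\gr,\dgcat}(P,f)\) the Serre functor is \(S\cong(d-W)[n]\), up to the standard normalization of the shift, since \((d)\cong[2]\) on graded matrix factorizations. Because \(\gcd(d,W)=1\), the group generated by \((1)\) and \([1]\) modulo \((d)=[2]\) is generated by \((d-W)\) and \([1]\). Choose integers \(a,b\) with \(a(d-W)+bd=1\). Then
\[
(1)\;\cong\; S^{a}\circ[\,2b-an\,].
\]
This expresses \(\rho_f=(1)\) purely through the Serre functor and the shift. Any Morita equivalence \(\Phi:\mathcal C_f\simeq\mathcal C_g\) commutes with Serre functors and shifts as enhanced functors, so \(\Phi\rho_f\cong\rho_g\Phi\).

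\emph{Step 2: dg orbit category.} Next form the dg orbit category \(\mathcal C_f/\rho_f\) and its perfect hull. By the standard comparison (Keller's orbit categories, or Tabuada--Dyckerhoff for MF), the perfect hull of the orbit category of graded matrix factorizations by \((1)\) is Morita equivalent to \(\MF^{\dgcat}(\widehat P,f)\simeq\Dsg^{\dgcat}(A_f)\). The isolated-singularity hypothesis, i.e.\ quasismoothness, ensures idempotent completion gives the completed version. Since \(\Phi\) intertwines \(\rho\), it induces \(\Dsg^{\dgcat}(A_f)\simeq_{\Mor}\Dsg^{\dgcat}(A_g)\). The non-linear-cone hypothesis guarantees \(f,g\in\m^2\), so the singularities are genuine.

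\emph{Step 3: categorical Mather--Yau.} Apply the categorical Mather--Yau theorem for isolated hypersurface singularities of the same embedding dimension. Here \(n+2\) variables are fixed, and since the cone is not linear the embedding dimension equals \(n+2\). It yields \(A_f\cong A_g\), i.e.\ a formal coordinate change \(\phi\) and a unit \(u\) with \(g\circ\phi=uf\). In fact Hochschild cohomology gives the Tjurina algebras, which are isomorphic.

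\emph{Step 4: from formal to graded.} This is the main obstacle. I would use the weighted homogeneity: \(f\) lies in its Jacobian ideal by Euler, so \(A_f\) is quasihomogeneous and the Tjurina algebra equals the Milnor algebra. The \(\Gm\)-grading on \(\widehat P/(f)\) corresponds to a maximal torus in the (pro-algebraic) automorphism group of \(A_f\). Alternatively, the dual torus action on the orbit category, as in the abstract, identifies the grading intrinsically. Any two maximal tori are conjugate, so after adjusting \(\phi\) by an automorphism we may assume it is \(\Gm\)-equivariant for the given weights; rescaling by the character preserves degree. Then \(\phi\) is a weight-preserving polynomial automorphism, and comparing degree-\(d\) parts gives \(g\circ\phi=cf\) with \(c=u(0)\). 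For equal weights, \(\phi\) is linear, giving \(L\in\mathrm{GL}_{n+2}\). The conjugacy of tori needs the weight-\(1\) subgroup identified; I would first try Saito's theorem to recover the Euler field up to conjugacy.
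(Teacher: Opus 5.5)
\paragraph{Where the proposal falls short.} Your Steps~1--3 are the paper's argument: Proposition~\ref{prop:general-serre}, then Proposition~\ref{prop:orbit-completion} with Corollary~\ref{cor:general-local}, then Theorem~\ref{thm:hua-keller}. They yield only an \emph{ungraded} isomorphism \(A_f\cong A_g\). The weighted theorem needs a weight-preserving one, and your Step~4, which you rightly call the main obstacle, is not carried out. The torus argument depends on two claims you do not prove.

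First, you assert that the Euler \(\Gm\) is a maximal torus of \(\operatorname{Aut}(A_f)\). This is not a general property of weighted homogeneous isolated singularities. If \(f=x_0x_1+h\) with \(w_0+w_1=d\), the torus \(x_0\mapsto tx_0\), \(x_1\mapsto t^{-1}x_1\) preserves \(f\) and commutes with the Euler action. In the range \(d>W\) no monomial \(x_ix_j\) with \(i\ne j\) has degree \(d\), so the quadratic part of \(f\) is at most one square \(z^2\) with \(2w_z=d\). Maximality can then plausibly be deduced from Saito's uniqueness of weights for the remaining part in \(\m^3\), but that deduction is exactly the missing content.

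Second, you assume that maximal tori of the infinite-dimensional group \(\operatorname{Aut}(A_g)\) are conjugate. This is a nontrivial structure statement for a pro-algebraic group that you would have to supply. Only once both points hold can \(\phi\) be corrected to a \(\Gm\)-equivariant isomorphism. As written, the proposal stops at ``I would first try Saito's theorem'', so the weighted conclusion is not established.

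\paragraph{How the paper closes this step.} The paper never forgets the grading in the step that matters; this is the ``dual torus action'' you mention only in passing, and it needs three inputs.
\begin{enumerate}
\item Proposition~\ref{prop:equivariant-orbit} shows that the comparison of Proposition~\ref{prop:orbit-completion} intertwines the dual action \(\chi_f(\lambda)(h_j)=\lambda^{-j}h_j\) on the orbit with the Euler action on \(\Dsg^{\dgcat}(A_f)\), via the linearization \(m\mapsto\lambda^{-q}m\) on \(M_q\). Since \(F_{\mathrm{orb}}\) preserves the orbit index \(j\), the induced Morita equivalence is Euler-equivariant.
\item Lemma~\ref{lem:equivariant-HK} shows that the Hua--Keller isomorphism \(\kappa_f:T_f\to\HH^0(\Dsg^{\dgcat}(A_f))\), built from multiplication classes, is natural for coordinate automorphisms preserving \((f)\). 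Hence \(\kappa_g^{-1}F_*\kappa_f:T_f\to T_g\) preserves weight spaces, i.e.\ it is an isomorphism of graded algebras.
\item The Euler identity identifies \(T_f\) with the graded Jacobian algebra. The graded reconstruction of Proposition~\ref{prop:weighted-globalization} then gives the weight-preserving automorphism directly.
\end{enumerate}
This route needs no information about tori in \(\operatorname{Aut}(A_f)\), and it works even when hyperbolic quadratic pairs are present, as in the Fano range. Your route, if completed, would prove something stronger in the range \(d>W\): that an ungraded isomorphism \(A_f\cong A_g\) already forces a graded one.
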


\subsection{Strategy of the proof}
We now describe the argument used for ordinary and weighted hypersurfaces,
both when \(d<W\) and when \(d>W\), retaining the notation introduced above.
For a weighted homogeneous polynomial \(f\in P_d\), the standard equivalence
gives
\[
 \mathcal C_f=\MF^{\gr,\dgcat}(P,f)
 \simeq_{\Mor}\Dsg^{\gr,\dgcat}(R_f).
\]
Quasismoothness ensures that the cone is smooth away from its vertex; the assumption \(f\in\m^2\) makes the vertex singular.  Thus \(A_f\) is an
isolated complete local hypersurface singularity, and \(\mathcal C_f\) is
smooth and proper.  When \(d<W\), the category \(\mathcal C_f\) is identified with
\(\mathcal{K}u(X_f)\) by
\cite[Theorems~2.5(i) and~3.10]{Orlov} and
\cite[Theorem~6.13]{BFK}.  When \(d>W\), we use the full category \(\mathcal C_f\).

Let \(S_{\mathcal C_f}\) denote the Serre functor of \(\mathcal C_f\), and let
\(\omega_{R_f}\) denote the graded canonical module of \(R_f\).  The internal
shift satisfies
\[
 \rho_f^d\simeq[2],\qquad
 S_{\mathcal C_f}\simeq\rho_f^{d-W}[n].
\]
The first is the two-periodicity of a degree-\(d\) graded matrix factorization;
the second follows from graded Auslander--Reiten duality and
\(\omega_{R_f}\cong R_f(d-W)\).  Under the assumption \(\gcd(d,W)=1\), choose
\(u,v\in\mathbb Z\) such that \(u(d-W)+vd=1\).  Then
\[
 \rho_f\simeq S_{\mathcal C_f}^{u}[2v-un].
\]
The grading shift is consequently determined by the Serre functor and the
cohomological shift.  An enhanced equivalence
\(F:\mathcal C_f\simeq\mathcal C_g\) therefore admits a canonical
intertwining isomorphism
\(F\rho_f\simeq\rho_gF\).

We next recover the singularity category of the completed cone by forgetting
the internal grading.  Since an object and its grading shifts have the same
underlying ungraded object, the forgetful functor factors through the dg orbit
category \(\mathcal C_f/\rho_f\).  Passing to its perfect dg hull and
completing at the vertex gives a Morita equivalence
\[
 \Perf_{\dgcat}\bigl(\mathcal C_f/\rho_f\bigr)
 \simeq_{\Mor}\Dsg^{\dgcat}(A_f).
\]
Here \(\Perf_{\dgcat}(-)\) denotes the perfect dg hull, obtained by adjoining
shifts, cones, and direct summands.  The corresponding equivalence of
triangulated categories is established in \cite[Proposition~A.8]{KMVdB}.
To obtain this Morita equivalence, we use the functor induced by forgetting the
grading.  The results of \cite[Appendix~A]{KMVdB} show that it is fully
faithful on the dg orbit and that its image generates the ungraded
singularity category after adjoining cones and direct summands.
The passage to the completed local ring is justified by
\cite[Theorem~5.7]{Dyckerhoff}; see Proposition~\ref{prop:orbit-completion}
for the details.  Thus an enhanced equivalence compatible with the grading
shifts induces
\[
 \Dsg^{\dgcat}(A_f)\simeq_{\Mor}\Dsg^{\dgcat}(A_g).
\]

The local reconstruction theorem~\cite[Theorem~5.9]{HuaKeller} identifies the
degree-zero Hochschild cohomology as
\[
 \HH^0\!\left(\Dsg^{\dgcat}(A_f)\right)
 \cong
 T_f:=\frac{\C[[x_0,\ldots,x_{n+1}]]}
 {(f,\partial_0f,\ldots,\partial_{n+1}f)}
\]
and reconstructs \(A_f\) from its dg singularity category when the dimension
is fixed.  This dimension hypothesis holds because the hypersurfaces
compared in our theorems have the same ambient dimension.

It remains to pass from the formal germ to the projective hypersurface.  This
last step depends on the grading and differs for ordinary and weighted
equations.

\medskip
\noindent\emph{Ordinary Fano hypersurfaces.}
The equivalence \(\mathcal{K}u(X_f)\simeq\mathcal{K}u(X_g)\) is transported to
an equivalence \(\mathcal C_f\simeq\mathcal C_g\).  Coprimality makes the
rotation intrinsic, so the orbit construction recovers \(A_f\cong A_g\).  A formal contact equivalence between two homogeneous
equations has an invertible linear part.  Comparing the terms of degree
\(d\) gives
\(g(Lx)=cf(x)\).  Hence \(X_f\cong X_g\).

\medskip
\noindent\emph{Weighted Fano hypersurfaces.}
The construction relating \(\mathcal{K}u(X_f)\) to \(A_f\) is compatible
with the torus actions.
The dual \(\Gm\)-action on the orbit becomes the Euler action on the completed
cone.  Naturality of the preceding identification of \(\HH^0\) therefore
recovers \(T_f\cong T_g\) as graded algebras.  The normalized Euler identity
\[
 \deg(f)\,f=\sum_i w_i x_i\partial_i f
\]
identifies the Tjurina and Jacobian algebras, and
\cite[Corollary~3]{Ahmed} gives a weight-preserving change of coordinates.

\medskip
\noindent\emph{Ordinary hypersurfaces of general type.}
Here the theorem begins with an equivalence of the full categories
\(\mathcal C_f\simeq\mathcal C_g\).  The same Serre-functor calculation makes
\(\rho_f\) intrinsic, and the same orbit comparison gives \(A_f\cong A_g\).
The homogeneous initial-term argument then yields a projective linear
isomorphism \(X_f\cong X_g\).

\medskip
\noindent\emph{Weighted hypersurfaces of general type.}
The full matrix-factorization equivalence is again passed to the dg orbit, now
with its dual \(\Gm\)-action.  The resulting isomorphism of graded Tjurina
algebras, together with the Euler identity and weighted homogeneous
reconstruction, gives a weight-preserving automorphism carrying one equation
to a scalar multiple of the other.

\subsection{Acknowledgements}
Shizhuo Zhang acknowledges support from the start-up funding of
Sun Yat-sen University (grant no.~34000-12256019).

\subsection{AI disclosure}
The categorical Torelli results for Fano hypersurfaces under the numerical
assumptions of this paper were already established in
\cite[Theorem~1.4]{LinZhang} for ordinary hypersurfaces and in
\cite[Theorem~1.1]{Lin2026} for weighted hypersurfaces.  The new proofs
presented here were developed by the authors.  They follow the suggestion in
\cite[Section~1.2, p.~79]{HuybrechtsRennemo}, entitled ``Graded matrix
factorizations,'' to pass from graded matrix factorizations to an orbit by
the internal grading shift and recover the defining equation from Hochschild
cohomology.  They were also inspired by the earlier work of Xiaojun Chen and
Jieheng Zeng, \emph{Singularity categories and singular loci of certain
abelian quotient singularities}~\cite{ChenZeng}, on recovering geometric
information from singularity categories.

The proofs for hypersurfaces of general type follow the same argument,
with the full graded matrix-factorization category in place of the Kuznetsov
component, and were also developed by the authors.  AI tools assisted with
checking details and reviewing the manuscript, including identifying and
correcting technical errors and typographical mistakes.  The authors are
responsible for the proofs and the final text.

\subsection{Organization and conventions}
Section~\ref{sec:preliminaries} recalls the required material on dg categories,
Hochschild--Serre algebras, matrix factorizations, singularity categories, and
dg orbits.  Section~\ref{sec:common} proves the reconstruction theorem for ordinary
hypersurfaces.  The Euler-equivariant weighted refinement is proved in
Section~\ref{sec:weighted-engine}.  Fano hypersurfaces are treated in
Section~\ref{sec:fano}, and hypersurfaces of general type are treated in
Section~\ref{sec:general-type}.  The comparison with the Hochschild--Serre
method is given in Section~\ref{sec:comparison}.

All categories are \(\C\)-linear.  We use \(\cong\) for isomorphisms of
varieties, stacks, rings, and functors, and \(\simeq\) for equivalences of
categories.  Dg orbits are always followed by pretriangulated and idempotent
completion.  Completed rings are completed at the homogeneous maximal ideal.
For a graded module we use the convention
\[
 M(1)_q=M_{q+1}.
\]
Thus a degree-zero morphism \(M\to N(j)\) is, after forgetting the grading, a
homogeneous morphism of degree \(j\).  Whenever an equivalence is said to
intertwine two autoequivalences, the intertwining isomorphism is part of the
enhanced data and its positive and negative iterates are formed using the
standard cocycle convention.  We replace dg categories by cofibrant
Morita-fibrant models when forming dg orbits; this does not change any perfect
dg hull occurring below.

\section{Preliminaries}\label{sec:preliminaries}

\subsection{Enhancements and semiorthogonal decompositions}
For a smooth projective variety or a smooth proper Deligne--Mumford stack \(X\),
we write \(\Db(X)\) for the bounded derived category of coherent sheaves and
use its standard dg enhancement.  A sequence of full triangulated
subcategories is a semiorthogonal decomposition
\[
 \mathcal D=\langle\mathcal A_1,\ldots,\mathcal A_m\rangle
\]
if \(\Hom(\mathcal A_j,\mathcal A_i)=0\) for \(j>i\) and the subcategories
generate \(\mathcal D\).  A subcategory is admissible if its inclusion has both
adjoints.  Admissible subcategories of \(\Db(X)\) inherit smooth proper dg
enhancements and therefore Serre functors.

An object \(E\) is exceptional if \(\RHom(E,E)\cong\C\).  Left mutation through
\(E\) is defined by the functorial triangle
\begin{equation}\label{eq:left-mutation}
 \RHom(E,F)\otimes E\longrightarrow F\longrightarrow \mathbf L_E(F).
\end{equation}
For a Fano variety, \(\mathcal O_X\) is exceptional and
\[
 \Db(X)=\langle\mathcal O_X,\mathcal O_X^\perp\rangle,
 \qquad
 \mathcal O_X^\perp=\{F\mid\RHom(\mathcal O_X,F)=0\}.
\]
If the standard exceptional block has more than one object, the right
orthogonal \(\mathcal O_X^\perp\) need not coincide with the Kuznetsov
component.  The two
categories agree for the index-one hypersurfaces considered below.

A dg functor \(F:\mathcal A\to\mathcal B\) is a quasi-equivalence if the map
\[
 \mathcal A(E,E')\longrightarrow\mathcal B(FE,FE')
\]
is a quasi-isomorphism for every pair of objects \(E,E'\in\mathcal A\), and
the functor \(H^0(F):H^0(\mathcal A)\to H^0(\mathcal B)\) is essentially
surjective.  The first condition implies that \(H^0(F)\) is fully faithful,
so the second condition is equivalently the assertion that \(H^0(F)\) is an
equivalence.

A dg functor \(F:\mathcal A\to\mathcal B\) is a Morita equivalence if
restriction of scalars induces an equivalence
\[
 F^*:\mathrm D(\mathcal B)\xrightarrow{\sim}\mathrm D(\mathcal A)
\]
between the derived categories of right dg modules.  Equivalently, derived
extension of scalars restricts to an equivalence
\[
 -\otimes_{\mathcal A}^{\mathbf L}\mathcal B:
 \operatorname{per}(\mathcal A)\xrightarrow{\sim}
 \operatorname{per}(\mathcal B).
\]
Every quasi-equivalence is a Morita equivalence, but the converse is false in
general.  An isomorphism in the Morita homotopy category is represented, after
passing to pretriangulated idempotent-complete Morita-fibrant replacements, by
a zigzag of quasi-equivalences.  Whenever a dg orbit is formed below, we choose
cofibrant representatives for this zigzag and for the relevant
autoequivalences.  These conventions are those of
\cite[Sections~2.2--2.3, 3.1, 3.8, and~4.1]{KellerDG} and
\cite[Definition~2.1 and Sections~3 and~7]{Toen}.

A Morita equivalence between smooth proper dg categories intertwines their
Serre functors, by uniqueness of the representing Serre bimodule.

A triangulated equivalence is sufficient for our arguments only when a dg lift
is known, for example for an equivalence of Fourier--Mukai type.  The dg enhancement
is required because dg orbits and Hochschild cohomology are not invariants
of a triangulated category without a specified enhancement; see
\cite{Toen,KellerDG}.

\subsection{Serre functors, rotation, and the Hochschild--Serre algebra}
Let \(\mathcal C\) be a smooth proper dg category.  Its Serre functor is
characterized by functorial isomorphisms
\[
 \Hom_{H^0(\mathcal C)}(E,F)^\vee
 \cong \Hom_{H^0(\mathcal C)}(F,S_{\mathcal C}E).
\]
Every enhanced equivalence intertwines Serre functors.  We use the convention
\begin{equation}\label{eq:HS-definition}
 \HS^{p,q}(\mathcal C)=
 \Hom_{\operatorname{Fun}(\mathcal C,\mathcal C)}
 \bigl(\id_{\mathcal C},S_{\mathcal C}^{q}[p]\bigr),
 \qquad
 \HS(\mathcal C)=\bigoplus_{p,q\in\mathbb Z}\HS^{p,q}(\mathcal C),
\end{equation}
with multiplication given by twisted composition.  If
\(u:\id\to S^q[p]\) and \(v:\id\to S^{q'}[p']\), then
\begin{equation}\label{eq:HS-product}
 u\cdot v:\id\xrightarrow{v}S^{q'}[p']
 \xrightarrow{S^{q'}[p'](u)}S^{q+q'}[p+p'].
\end{equation}
This convention fixes the order in the product.  An enhanced equivalence
induces an isomorphism of bigraded algebras, not merely a bigraded vector-space
isomorphism.

For an ordinary degree-\(d\) Fano hypersurface of index \(a\), define the
rotation on its Kuznetsov component by
\begin{equation}\label{eq:geometric-rotation}
 \rho(F)=\mathbf L_{\mathcal O_X}\bigl(F\otimes\mathcal O_X(1)\bigr),
\end{equation}
with the analogous definition on the weighted hypersurface stack.  Under the
equivalence in~\cite[Theorems~2.5(i) and~3.10]{Orlov} it becomes the internal
grading shift \((1)\).  In
general, the choice of \(\rho\) is additional data.  Under the coprimality hypotheses in this paper it is an
explicit expression in the Serre functor and the cohomological shift and is
therefore intrinsic.

\subsection{Graded singularity categories and matrix factorizations}
The definitions of dg categories, dg functors, dg modules, dg quotients,
quasi-equivalences, and Morita equivalences used in this subsection may be
found in \cite[Sections~2.2--3.1 and~3.8]{KellerDG} and
\cite[Definition~2.1 and Sections~3 and~7]{Toen}.  We use cohomological
grading throughout.

For a Noetherian ring \(R\), the singularity category is
\[
 \Dsg(R)=\Db(\operatorname{mod}R)/\Perf(R).
\]
The Drinfeld quotient gives its canonical dg enhancement
\(\Dsg^{\dgcat}(R)\).  For a graded ring one similarly has
\(\Dsg^{\gr,\dgcat}(R)\) and the internal shift \((1)\).

Let \(B\) be regular and \(f\in B\).  A matrix factorization is a
\(\mathbb Z/2\)-graded finite projective \(B\)-module with an odd differential
whose square is multiplication by \(f\).  If \(f\) has degree \(d\), a graded
factorization may be displayed as
\[
 E_0\xrightarrow{\delta_0}E_1\xrightarrow{\delta_1}E_0(d).
\]
Shifting twice applies the internal shift by \(d\), so
\begin{equation}\label{eq:periodicity}
 (d)\simeq[2].
\end{equation}
The homotopy category of matrix factorizations is identified with the
hypersurface singularity category in
\cite{Buchweitz,Eisenbud,Orlov}.  The dg lift in
\cite[Theorem~2.49]{BRTV} gives
\begin{equation}\label{eq:mf-sg}
 \MF^{\gr,\dgcat}(P,f)\simeq_{\Mor}\Dsg^{\gr,\dgcat}(R_f).
\end{equation}

Recall that
\[
 P=\C[x_0,\ldots,x_{n+1}],\qquad \deg x_i=w_i,
 \qquad W=\sum_{i=0}^{n+1}w_i.
\]
The canonical bundle of the weighted projective stack is
\(\omega_{\mathbb P(w_0,\ldots,w_{n+1})}\cong
\mathcal O_{\mathbb P(w_0,\ldots,w_{n+1})}(-W)\); equivalently, the graded
canonical module of the polynomial ring \(P\) is \(P(-W)\).  The hypersurface
resolution
\[
 0\longrightarrow P(-d)\xrightarrow{\cdot f}P\longrightarrow R_f
 \longrightarrow0
\]
therefore gives
\begin{equation}\label{eq:canonical-module}
 \omega_{R_f}\cong R_f(d-W).
\end{equation}
Since \(R_f\) has Krull dimension \(n+1\), graded Auslander--Reiten duality
gives
\begin{equation}\label{eq:serre-full}
 S_{\mathcal C_f}\simeq \rho_f^{\,d-W}[n].
\end{equation}
The same formula appears immediately before formula~(5.4) in
\cite[Section~5.3]{KMVdB}; for the graded matrix-factorization convention,
compare~\cite[Proposition~3.7]{LinZhang}.

\begin{proposition}\label{prop:intrinsic-shift}
Assume \(\gcd(d,W)=1\).  Choose \(u,v\in\mathbb Z\) satisfying
\begin{equation}\label{eq:bezout}
 u(d-W)+vd=1.
\end{equation}
Then there is an isomorphism of enhanced functors
\begin{equation}\label{eq:intrinsic-shift}
 \rho_f\simeq S_{\mathcal C_f}^{u}[2v-un].
\end{equation}
Consequently every enhanced equivalence between full graded
matrix-factorization categories with the same weights and degree automatically
intertwines their internal shifts.
\end{proposition}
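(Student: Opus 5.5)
The plan is to compute \(S_{\mathcal C_f}^{u}[2v-un]\) from the two relations already recorded, namely the periodicity \eqref{eq:periodicity} and the Serre formula \eqref{eq:serre-full}, both read as isomorphisms of enhanced (dg) functors on \(\mathcal C_f\).

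\emph{Step 1: enhanced versions of the relations.} The periodicity \((d)\simeq[2]\) will be lifted to the dg level by exhibiting an explicit natural dg isomorphism. A graded factorization \(E_0\to E_1\to E_0(d)\), shifted twice, is literally the factorization \(E_0(d)\to E_1(d)\to E_0(2d)\) up to the sign conventions for the shift. So \([2]\) and \((d)\) agree as dg endofunctors up to a strict natural isomorphism given by signs.

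For \eqref{eq:serre-full}, we use that \(S_{\mathcal C_f}\) is represented by the dual of the diagonal bimodule, since \(\mathcal C_f\) is smooth and proper. Graded Auslander--Reiten duality with \(\omega_{R_f}\cong R_f(d-W)\) identifies this bimodule with the one representing \((d-W)[n]\), as in \cite[Section~5.3]{KMVdB} and \cite[Proposition~3.7]{LinZhang}. I take this bimodule-level identification as given by those references.

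\emph{Step 2: the Bézout computation.} All the functors involved are \([1]\), \((1)\) and \(S\), and they commute up to coherent isomorphism: \([1]\) and \((1)\) commute strictly, and \(S\) commutes with any autoequivalence by uniqueness of the Serre bimodule. Hence
\[
S_{\mathcal C_f}^{u}[2v-un]\simeq \rho_f^{u(d-W)}[un][2v-un]=\rho_f^{u(d-W)}[2v]\simeq\rho_f^{u(d-W)}\rho_f^{vd}=\rho_f,
\]
using \eqref{eq:bezout} in the last equality. Negative powers are handled through the inverse bimodules, following the cocycle convention fixed in the introduction.

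\emph{Step 3: the consequence.} Let \(F:\mathcal C_f\simeq_{\Mor}\mathcal C_g\) be an enhanced equivalence with the same weights and degree, so the same \(u,v,n\) apply to both sides. By uniqueness of the Serre bimodule, \(F S_{\mathcal C_f}\simeq S_{\mathcal C_g}F\). The equivalence \(F\) also commutes with \([1]\) because it is dg. Combining these with \eqref{eq:intrinsic-shift} on both sides gives \(F\rho_f\simeq\rho_gF\), and this isomorphism is canonically built from the Serre intertwiner.

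\emph{Main obstacle.} The delicate point is Step 1: making sure \eqref{eq:serre-full} holds as an isomorphism of dg bimodules and not merely of triangulated functors. I would try first to verify it directly with the standard formula for the Serre functor of \(\MF^{\gr}\) in terms of the Hom-bimodule and the residue pairing, namely the graded Dyckerhoff or Kontsevich–Segal description. That description gives the bimodule \(\mathcal C_f(-,-)^\vee\simeq\mathcal C_f(-,-(d-W)[n])\) directly.
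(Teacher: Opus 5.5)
Your proposal is correct and follows essentially the same route as the paper: both take $S_{\mathcal C_f}\simeq\rho_f^{d-W}[n]$ (from graded Auslander--Reiten duality and the cited references) and $\rho_f^d\simeq[2]$ as enhanced isomorphisms, and both carry out the B\'ezout computation $S_{\mathcal C_f}^{u}[2v-un]\simeq\rho_f^{u(d-W)}[2v]\simeq\rho_f^{u(d-W)+vd}=\rho_f$ in the group of dg autoequivalences. The consequence is then obtained exactly as in the paper, by transporting the Serre bimodule to get $FS_{\mathcal C_f}\simeq S_{\mathcal C_g}F$ and using the strict compatibility of $F$ with shifts (your explicit sign check of $(d)\simeq[2]$ is the same argument the paper gives later in its Fano section).
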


\begin{proof}
Put \(b=d-W\).  The functors \(\rho_f\), \([1]\), and
\(S_{\mathcal C_f}\) commute up to the canonical dg isomorphisms.  Hence the
identities \(S_{\mathcal C_f}\simeq\rho_f^b[n]\) and
\(\rho_f^d\simeq[2]\) give, also for negative \(u\) and \(v\),
\begin{align*}
 S_{\mathcal C_f}^{u}[2v-un]
 &\simeq (\rho_f^b[n])^u[2v-un]  \\
 &\simeq \rho_f^{ub}[2v]
  \simeq \rho_f^{ub}(\rho_f^d)^v
  \simeq \rho_f^{ub+vd}=\rho_f.
\end{align*}
Here inverse powers are taken in the group of dg autoequivalences of
\(\mathcal C_f\).  The equality \(ub+vd=1\) is available because
\(\gcd(d,b)=\gcd(d,W)=1\).

Let \(F:\mathcal C_f\to\mathcal C_g\) be an enhanced equivalence and let
\(G\) be a quasi-inverse.  Since the Serre bimodule is characterized by
\[
 \RHom_{\mathcal C_f}(E,E')^\vee
 \simeq
 \RHom_{\mathcal C_f}(E',S_{\mathcal C_f}E),
\]
transport by \(F\) gives a canonical isomorphism
\(F S_{\mathcal C_f}\xrightarrow{\sim}S_{\mathcal C_g}F\).  Taking the
\(u\)-th power and composing with the strict compatibility of a dg functor with
cohomological shifts yields
\[
 F\rho_f
 \simeq F S_{\mathcal C_f}^{u}[2v-un]
 \simeq S_{\mathcal C_g}^{u}[2v-un]F
 \simeq \rho_gF.
\]
This is the required isomorphism of functors.
\end{proof}

\begin{remark}
If \(e=\gcd(d,W)>1\), the same calculation recovers \(\rho_f^e\), not a
preferred \(e\)-th root.  All reconstruction results below remain true for an
equivalence explicitly equipped with \(F\rho_f\cong\rho_gF\); coprimality is
used only to make this compatibility automatic.
\end{remark}

\subsection{Dg orbits and perfect hulls}
Let \(\mathcal C\) be a dg category and \(T\) a dg autoequivalence.  After
choosing cofibrant models, the dg orbit has the same objects and morphism
complexes
\begin{equation}\label{eq:dg-orbit}
 \Hom_{\mathcal C/T}(E,F)=
 \bigoplus_{j\in\mathbb Z}\Hom_{\mathcal C}(E,T^jF),
\end{equation}
with composition
\begin{equation}\label{eq:orbit-composition}
 (k_j\star h_i):=T^i(k_j)\circ h_i
 \in\Hom_{\mathcal C}(E,T^{i+j}G)
\end{equation}
for \(h_i:E\to T^iF\) and \(k_j:F\to T^jG\).  The differential is the direct
sum differential, and \eqref{eq:orbit-composition} is a chain map because
\(T\) is a dg functor.  We write
\[
 \Perf_{\dgcat}(\mathcal C/T):=\pretr(\mathcal C/T)^{\natural}
\]
for its pretriangulated idempotent-complete hull.

\begin{lemma}\label{lem:shifted-H0}
Let \(F:\mathcal C\to\mathcal D\) be a dg functor between dg categories with
compatible shifts.  If \(H^0(F)\) is fully faithful on all shifted objects, then
\(F\) is quasi-fully faithful.  If \(H^0(\mathcal D)\) is classically generated
by the image, the induced functor on perfect dg hulls is a quasi-equivalence.
\end{lemma}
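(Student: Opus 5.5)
The plan is to split the lemma into its two assertions and reduce each to a statement about cohomology of morphism complexes.

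\emph{Quasi-full faithfulness.} We must show that for all objects \(E,E'\in\mathcal C\) the chain map
\[
 F_{E,E'}:\mathcal C(E,E')\longrightarrow\mathcal D(FE,FE')
\]
is a quasi-isomorphism, i.e.\ an isomorphism on \(H^k\) for every \(k\in\mathbb Z\). Since both categories have compatible shifts, we have
\[
 H^k\mathcal C(E,E')\cong H^0\mathcal C(E,E'[k])=\Hom_{H^0(\mathcal C)}(E,E'[k]),
\]
and similarly in \(\mathcal D\). The compatibility \(F(E'[k])\cong (FE')[k]\) is strict for a dg functor, or holds up to a canonical isomorphism. It identifies \(H^k(F_{E,E'})\) with the map
\[
 \Hom_{H^0(\mathcal C)}(E,E'[k])\to\Hom_{H^0(\mathcal D)}(FE,FE'[k])
\]
induced by \(H^0(F)\). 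The only point that needs care is that this identification commutes with \(F\), that is, the square relating \(H^k\) of hom complexes to \(H^0\) of shifted homs is natural. I would verify this by writing the shift isomorphism as composition with the degree \(-k\) closed identity element \(E'\to E'[k]\) and using that \(F\) preserves composition and units. The hypothesis that \(H^0(F)\) is fully faithful on all shifted objects then gives bijectivity for every \(k\).

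\emph{Quasi-equivalence on hulls.} Quasi-fully faithful dg functors induce quasi-fully faithful functors
\[
 \pretr(\mathcal C)^\natural\to\pretr(\mathcal D)^\natural,
\]
because the hull is built functorially from twisted complexes and idempotents, and the Hom complexes there are iterated cones and summands of the original ones. Equivalently, \(F\) induces a fully faithful functor \(\operatorname{per}(\mathcal C)\to\operatorname{per}(\mathcal D)\) via derived extension of scalars. For essential surjectivity, the essential image of \(H^0(\Perf_{\dgcat}\mathcal C)\) is a thick subcategory of \(H^0(\Perf_{\dgcat}\mathcal D)\): it is triangulated by full faithfulness and closed under summands by idempotent completeness. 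It contains the image of \(\mathcal C\). Since \(H^0(\mathcal D)\) is classically generated by that image, every object of \(\mathcal D\), and hence of its hull, lies in the thick closure. The functor is therefore essentially surjective on \(H^0\), hence a quasi-equivalence.

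I expect the main obstacle to be the naturality bookkeeping in the first step, namely checking that the shift compatibility of \(F\) is coherent so that \(H^k(F)\) really equals \(H^0(F)\) on shifted Homs. The second step is standard.
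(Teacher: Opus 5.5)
Your proposal is correct and follows essentially the same route as the paper. The paper also identifies \(H^p(F_{E,E'})\) with \(H^0(F)\) on \(\Hom(E,E'[p])\) via a commutative square coming from shift compatibility, passes quasi-full faithfulness to \(\pretr(-)^\natural\) because the hull's morphism complexes are built by finite sums, shifts and cones, and deduces essential surjectivity from classical generation. Your explicit check that the essential image is thick (closed under cones by fullness and exactness, and under summands by idempotent completeness) spells out the step the paper compresses into a single ``therefore''.
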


\begin{proof}
For \(E,E'\in\mathcal C\) and \(p\in\mathbb Z\), compatibility with shifts
gives a commutative square
\[
\begin{CD}
 H^p\Hom_{\mathcal C}(E,E') @>{H^p(F_{E,E'})}>>
 H^p\Hom_{\mathcal D}(FE,FE')\\
 @V{\sim}VV @VV{\sim}V\\
 \Hom_{H^0(\mathcal C)}(E,E'[p]) @>{H^0(F)}>>
 \Hom_{H^0(\mathcal D)}(FE,FE'[p]).
\end{CD}
\]
The bottom arrow is bijective by hypothesis.  Thus
\(F_{E,E'}\) induces an isomorphism in every cohomological degree and is a
quasi-isomorphism.  Hence \(F\) is quasi-fully faithful.

The induced functor \(\pretr(F)\) is again quasi-fully faithful because its
morphism complexes are obtained from the original ones by finite sums, shifts,
and cones.  Passing to the Karoubi envelope preserves this property.  If the
image of \(H^0(F)\) classically generates \(H^0(\mathcal D)\), every object of
the pretriangulated idempotent-complete hull of \(\mathcal D\) is a direct
summand of a finite iterated cone of shifts of objects \(FE\).  Therefore
\(H^0(\pretr(F)^\natural)\) is essentially surjective, and
\(\pretr(F)^\natural\) is a quasi-equivalence.
\end{proof}

\begin{proposition}
\label{prop:orbit-completion}
Assume that \(\Spec R_f\) is smooth away from the homogeneous maximal ideal
\(\m=(x_0,\ldots,x_{n+1})\) and singular at \(\m\).  Then there is a natural
Morita equivalence
\begin{equation}\label{eq:orbit-completion}
 \Perf_{\dgcat}\!\left(
   \Dsg^{\gr,\dgcat}(R_f)/(1)\right)
 \simeq_{\Mor}
 \Dsg^{\dgcat}(A_f).
\end{equation}
\end{proposition}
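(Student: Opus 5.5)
We construct a dg functor out of the orbit using the forgetful functor, check that it is quasi-fully faithful and that its image generates, apply Lemma~\ref{lem:shifted-H0}, and finally pass from \(R_f\) to the completion \(A_f\).

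\emph{Step 1: the dg functor.} Let \(\mathrm{For}:\Dsg^{\gr,\dgcat}(R_f)\to\Dsg^{\dgcat}(R_f)\) be induced by the exact functor forgetting the grading on \(\Db(\operatorname{mod}^{\gr}R_f)\). It sends graded perfect complexes to perfect complexes, so it descends to the Drinfeld quotients. Since \(\mathrm{For}(E(j))=\mathrm{For}(E)\) strictly, a degree-\(j\) component \(h_j\in\Hom(E,F(j))\) gives a morphism \(\mathrm{For}(E)\to\mathrm{For}(F)\). Summing over \(j\) as in \eqref{eq:dg-orbit} defines a dg functor
\[
\mathcal F:\Dsg^{\gr,\dgcat}(R_f)/(1)\to\Dsg^{\dgcat}(R_f).
\]
Compatibility with the composition law \eqref{eq:orbit-composition} is the identity \(\mathrm{For}(T^i k)=\mathrm{For}(k)\).

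\emph{Step 2: full faithfulness on \(H^0\).} For each shift \([p]\) one must show
\[
\bigoplus_j\Hom_{\Dsg^{\gr}}(E,F(j)[p])\to\Hom_{\Dsg}(\mathrm{For}E,\mathrm{For}F[p])
\]
is bijective. This is the content of \cite[Appendix~A]{KMVdB}; in matrix-factorization language it can also be seen directly. The ungraded Hom is \(H^*\) of a finitely generated graded \(P\)-module of Hom-matrices, and this module is annihilated by a power of \(\m\) because the singularity is isolated. Hence the module is finite dimensional and coincides with the direct sum of its graded pieces. Lemma~\ref{lem:shifted-H0} then makes \(\mathcal F\) quasi-fully faithful.

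\emph{Step 3: completion.} Consider the completion functor \(\Dsg(R_f)\to\Dsg(A_f)\). Because \(\Spec R_f\) is singular only at \(\m\), \(\Dsg(R_f)\) is equivalent to \(\Dsg(R_{f,\m})\). By \cite[Theorem~5.7]{Dyckerhoff}, passage to the completion is then fully faithful up to idempotent completion, and \(\Dsg(A_f)\) is the idempotent completion. The key point is again that the Hom modules have finite length, so completion does not change them. This gives a dg functor
\[
\mathcal G:\Dsg^{\gr,\dgcat}(R_f)/(1)\to\Dsg^{\dgcat}(A_f)
\]
that is quasi-fully faithful.

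\emph{Step 4: generation.} \(\Dsg(A_f)\) is classically generated by the residue field \(k=A_f/\m\), by \cite[Theorem~5.7]{Dyckerhoff}, since the singularity is isolated. The object \(k\) is the image of the graded module \(R_f/\m\). Hence the image of \(\mathcal G\) classically generates, and the second half of Lemma~\ref{lem:shifted-H0} yields the Morita equivalence \eqref{eq:orbit-completion}. Naturality holds because every step is functorial in graded isomorphisms of \(f\).

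The main obstacle is Step 2, in particular the finite-length argument. It must hold in every cohomological degree and be compatible with the chosen cofibrant models of the orbit. I would first prove it at the level of graded matrix factorizations, where Hom complexes are explicit, and then transport it along \eqref{eq:mf-sg}.
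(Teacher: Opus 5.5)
Your proposal is correct and follows essentially the same route as the paper: the forgetful dg functor on the orbit, quasi-full faithfulness from \cite[Lemma~A.7]{KMVdB} together with Lemma~\ref{lem:shifted-H0}, the localization and completion functors (fully faithful and dense up to direct summands), and classical generation by the graded residue field, with the paper citing \cite[Propositions~A.1 and~A.2]{KMVdB} for the last two steps. One small remark: in Step~2, finite length is not what makes the ungraded Hom the direct sum of its graded pieces, since that holds for any Hom complex between finitely generated graded free modules; finite length matters in Step~3, where it ensures that localization and completion leave the Hom spaces unchanged.
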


\begin{proof}
Set \(\mathcal G_f=\Dsg^{\gr,\dgcat}(R_f)\), and denote its dg orbit by
\(\mathcal O_f=\mathcal G_f/(1)\).  Forgetting the grading
defines a dg functor
\[
 U_f:\mathcal O_f\longrightarrow\Dsg^{\dgcat}(R_f).
\]
On objects it sends a graded complex to its underlying complex.  On the
\(j\)-th orbit summand it is the map
\[
 \Hom_{\mathcal G_f}(M,N(j))\longrightarrow
 \Hom_{\Dsg^{\dgcat}(R_f)}(U_fM,U_fN)
\]
obtained by forgetting that the underlying morphism has internal degree \(j\).
Formula~\eqref{eq:orbit-composition} shows that these maps define a dg functor.

For \(M,N\in\mathcal G_f\), it is proved in
\cite[Lemma~A.7]{KMVdB} that the induced map
\begin{equation}\label{eq:KMVdB-Hom}
 \bigoplus_{j\in\mathbb Z}
 \Hom_{H^0(\mathcal G_f)}(M,N(j))
 \xrightarrow{\ \sim\ }
 \Hom_{\Dsg(R_f)}(U_fM,U_fN)
\end{equation}
is bijective.  Replacing \(N\) by \(N[p]\) in
\eqref{eq:KMVdB-Hom} and applying Lemma~\ref{lem:shifted-H0} shows that
\(U_f\) is quasi-fully faithful.

Let
\[
 L_f:\Dsg^{\dgcat}(R_f)\longrightarrow
 \Dsg^{\dgcat}((R_f)_{\m}),\qquad
 C_f:\Dsg^{\dgcat}((R_f)_{\m})\longrightarrow
 \Dsg^{\dgcat}(A_f)
\]
be derived extension of scalars.  Both functors are defined on the standard dg
quotients because a bounded complex of projectives remains perfect after base
change.  Since the singular locus of \(R_f\) is \(\{\m\}\), localization gives
an equivalence on homotopy categories.  Completion is fully faithful and its
image is dense up to direct summands by
\cite[Proposition~A.1]{KMVdB}.  Moreover the residue field
\(k=R_f/\m\), with its natural grading, lies in the image of \(U_f\), and
\(k\) classically generates each local singularity category by
\cite[Proposition~A.2]{KMVdB}.  It follows that the image of
\(C_fL_fU_f\) classically generates \(\Dsg(A_f)\) after taking direct
summands.

Applying Lemma~\ref{lem:shifted-H0} to the composite and then taking
pretriangulated idempotent-complete hulls gives a quasi-equivalence
\[
 \pretr(\mathcal O_f)^\natural
 \xrightarrow{\ \sim\ }
 \Perf_{\dgcat}\bigl(\Dsg^{\dgcat}(A_f)\bigr).
\]
The right-hand dg category is Morita equivalent to
\(\Dsg^{\dgcat}(A_f)\).  For the hypersurfaces considered here, the same final step
may be expressed through matrix factorizations: the perfect hull before
completion is identified with \(\MF^{\dgcat}(\widehat P,f)\) in
\cite[Theorem~5.7]{Dyckerhoff}, and the corresponding homotopy category is
identified with the idempotent completion in
\cite[Lemma~5.6]{Dyckerhoff}.  This proves \eqref{eq:orbit-completion}.
\end{proof}

\begin{remark}
Proposition~A.8 of~\cite{KMVdB} is stated as an equivalence of triangulated
categories.  Proposition~\ref{prop:orbit-completion} does not treat it as a dg
equivalence.  The explicit forgetful functor and the full-faithfulness and
generation results of~\cite[Appendix~A]{KMVdB} are combined with Lemma~\ref{lem:shifted-H0} and the completion theorem
\cite[Theorem~5.7]{Dyckerhoff}.  The perfect hull is essential.
\end{remark}

\section{The common categorical Mather--Yau reduction}\label{sec:common}

\subsection{The categorical Mather--Yau theorem}
Let
\[
 A_f=\C[[x_0,\ldots,x_{n+1}]]/(f)
\]
be an isolated complete local hypersurface singularity.  Its Tjurina algebra is
\[
 T_f=\frac{\C[[x_0,\ldots,x_{n+1}]]}
 {(f,\partial_0f,\ldots,\partial_{n+1}f)}.
\]

\begin{theorem}\label{thm:hua-keller}
By~\cite[Theorem~5.9]{HuaKeller}, whose reconstruction assertion uses
the Mather--Yau theorem~\cite[Theorem, p.~243]{MatherYau}, there is an
algebra isomorphism
\begin{equation}\label{eq:HK}
 \HH^0\!\left(\Dsg^{\dgcat}(A_f)\right)\cong T_f.
\end{equation}
If \(A_g\) is another isolated complete local hypersurface singularity in the
same number of variables and
\(\Dsg^{\dgcat}(A_f)\simeq_{\Mor}\Dsg^{\dgcat}(A_g)\), then
\(A_f\cong A_g\).
\end{theorem}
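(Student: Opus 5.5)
The statement is essentially a citation of \cite[Theorem~5.9]{HuaKeller}. The proof therefore consists of checking its hypotheses and recalling the structure of the argument there. There are two parts: computing \(\HH^0\) of the dg singularity category, and then recovering the singularity from it via the Mather--Yau theorem.

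For \eqref{eq:HK}, I will use the Morita equivalence \(\Dsg^{\dgcat}(A_f)\simeq_{\Mor}\MF^{\dgcat}(\widehat P,f)\) from \cite[Theorem~5.7]{Dyckerhoff}, up to idempotent completion, which does not change Hochschild cohomology. I then compute Hochschild cohomology of the \(\mathbb Z/2\)-graded matrix-factorization category through the Koszul-type resolution of the diagonal \cite{Dyckerhoff}. Since \(f\) has an isolated critical point, the Koszul complex of \(\partial_0f,\ldots,\partial_{n+1}f\) is a resolution, and the even part of Hochschild cohomology is the Jacobian (Milnor) algebra \(\widehat P/(\partial_i f)\).

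The \(\mathbb Z/2\)-folded answer is not exactly \(T_f\). The point of \cite{HuaKeller} is that one works with the \(\mathbb Z\)-graded dg category \(\Dsg^{\dgcat}(A_f)\) over \(\C\). Its \(\HH^0\) is computed from the dg singularity category as a Drinfeld quotient, via the comparison with Tate--Hochschild cohomology of \(A_f\). In degree zero this yields \(\widehat P/(f,\partial_if)\), since multiplication by \(f\) is no longer inverted into the periodicity. I will quote this identification directly from \cite[Theorem~5.9]{HuaKeller} rather than rederive it. I will also check that the isomorphism is one of algebras, so that the cup product on \(\HH^0\) matches ring multiplication in \(T_f\).

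For the reconstruction, a Morita equivalence \(\Dsg^{\dgcat}(A_f)\simeq_{\Mor}\Dsg^{\dgcat}(A_g)\) induces an isomorphism of Hochschild cohomology algebras, because Hochschild cohomology is a Morita invariant. Hence \(T_f\cong T_g\) as \(\C\)-algebras. The Mather--Yau theorem \cite{MatherYau} states the following: for isolated hypersurface singularities in the same number of variables \(n+2\), an isomorphism of Tjurina algebras implies contact equivalence, meaning \(g\circ\phi=u\cdot f\) for a formal automorphism \(\phi\) and a unit \(u\). Hence \(A_f\cong A_g\).

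The main obstacle is the number of variables. The dg category alone only sees the singularity up to stabilization (Kn\"orrer periodicity changes the dimension by two). So the equal-dimension hypothesis is essential and must be included, exactly as in the statement. A second point needing care is the formal rather than analytic setting. An isolated singularity is finitely determined, so \(f\) is contact equivalent to a polynomial, and the analytic Mather--Yau theorem transfers to \(\widehat P\).
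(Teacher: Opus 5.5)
Your proposal is correct and follows the paper, which likewise cites \cite[Theorem~5.9]{HuaKeller} for \(\HH^0\cong T_f\), uses Morita invariance of Hochschild cohomology to obtain \(T_f\cong T_g\), and applies Mather--Yau, with the equal number of variables excluding Kn\"orrer stabilization. You should drop your opening detour through Dyckerhoff's \(\mathbb Z/2\)-graded category, because its even Hochschild cohomology is in general the Milnor algebra \(\widehat P/(\partial_0 f,\ldots,\partial_{n+1}f)\) rather than \(T_f\), so it is not an \(\HH\)-preserving model of the \(\mathbb Z\)-graded \(\Dsg^{\dgcat}(A_f)\); the argument should rest on the Hua--Keller identification alone.
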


\begin{theorem}\label{thm:kalck}
By~\cite[Theorem~1.4]{Kalck}, the following classification holds.  Let \(A_f\)
be an isolated complete local hypersurface singularity and let
\(S\) be a complete local commutative Noetherian \(\C\)-algebra.  If their dg
singularity categories are Morita equivalent, then \(S\) is a hypersurface and
the larger-dimensional one of the two equations is obtained from the other by
adding an even number of nondegenerate quadratic variables.  Conversely, such
an even Kn\"orrer stabilization gives a Morita equivalence.
\end{theorem}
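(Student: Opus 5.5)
The plan is to reduce to Theorem~\ref{thm:hua-keller} by first showing that \(S\) is forced to be an isolated hypersurface singularity, and then equalizing dimensions by Kn\"orrer periodicity. Fix a Morita equivalence \(\Phi:\Dsg^{\dgcat}(A_f)\simeq_{\Mor}\Dsg^{\dgcat}(S)\).

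\emph{Step 1: \(S\) is a complete intersection.}
\begin{enumerate}[label=(\roman*)]
\item Since \(A_f\) is an isolated hypersurface singularity, \(\Dsg(A_f)\) is Hom-finite and satisfies \([2]\simeq\id\) by \eqref{eq:periodicity}. Both properties transfer along \(\Phi\).
\item Hom-finiteness of \(\Dsg(S)\) forces \(S\) to have an isolated singularity. Otherwise the stable endomorphisms of a module supported along a positive-dimensional singular stratum would be infinite-dimensional.
\item Next consider the residue field \(k\) of \(S\). It classically generates \(\Dsg(S)\) by \cite[Proposition~A.2]{KMVdB}. In the singularity category, \(\Hom_{\Dsg(S)}(k,k[p])\) agrees with \(\operatorname{Ext}^p_S(k,k)\) for \(p\gg0\).
\item Periodicity \([2]\simeq\id\) makes these dimensions bounded in \(p\). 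By Avramov's and Gulliksen's results on Betti numbers, \(S\) is then a complete intersection, since otherwise the growth is exponential. Moreover, its codimension \(c\) controls polynomial growth of degree \(c-1\), so bounded growth gives \(c=1\).
\end{enumerate}
Thus \(S\cong\C[[y_0,\ldots,y_m]]/(h)\) is an isolated complete local hypersurface singularity.

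\emph{Step 2: the two dimensions have the same parity.}
For an isolated Gorenstein singularity of Krull dimension \(r\), Auslander--Reiten duality gives the Serre functor \([r-1]\) on \(\Dsg\). This is the ungraded analogue of \eqref{eq:serre-full}. Since \(\Phi\) intertwines Serre functors, we get \([\dim A_f-1]\simeq[\dim S-1]\) on \(\Dsg(S)\). Combined with \([2]\simeq\id\) and the fact that \([1]\not\simeq\id\) on a nonzero singularity category, this forces \(\dim A_f\equiv\dim S\pmod 2\).

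\emph{Step 3: equalize dimensions and apply Mather--Yau.}
Suppose, say, that \(m+1\ge n+2\), so that \(S\) has at least as many variables as \(A_f\). Replace \(f\) by
\[
 f'=f+z_1^2+\cdots+z_{m-n-1}^2,
\]
where \(m-n-1\) is even by Step~2. Kn\"orrer periodicity supplies a Morita equivalence \(\Dsg^{\dgcat}(A_f)\simeq_{\Mor}\Dsg^{\dgcat}(A_{f'})\); the dg form can be obtained by tensoring matrix factorizations with the rank-one factorization of \(z_1^2+z_2^2\). Then \(A_{f'}\) and \(S\) are isolated hypersurfaces in the same number of variables with Morita equivalent dg singularity categories. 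Theorem~\ref{thm:hua-keller} gives \(A_{f'}\cong S\), which is the assertion. The converse direction is exactly this Kn\"orrer equivalence.

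\emph{Main obstacle.}
I expect Step~1 to be the hardest part: extracting ring-theoretic information about an arbitrary complete local \(S\) from its singularity category alone. In particular, one must justify that the Tate/stable Ext of \(k\) detects the complete-intersection property and the codimension. I would first try Avramov's characterization of complete intersections via the polynomial growth of Betti numbers of \(k\), together with the eventual agreement of Betti numbers and stable Ext. A subsidiary point is checking that Hom-finiteness really forces an isolated singularity.
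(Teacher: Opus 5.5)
The paper does not prove this statement; it only quotes Kalck's Theorem~1.4, so your proposal is a reconstruction from scratch. The architecture is reasonable: force \(S\) to be an isolated hypersurface, match the parity of the number of variables, then apply Kn\"orrer periodicity and Theorem~\ref{thm:hua-keller}. Steps~1(ii) and~3 are fine; for 1(ii), localize \(\operatorname{End}_{\Dsg(S)}(S/\mathfrak p)\) at a singular non-maximal prime \(\mathfrak p\). Two steps, however, rest on claims that are false as stated.

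\textbf{Step 1(iii).} The eventual identification \(\Hom_{\Dsg(S)}(k,k[p])\cong\operatorname{Ext}^p_S(k,k)\) is Buchweitz's comparison for \emph{Gorenstein} rings. Here \(S\) is arbitrary, and Gorensteinness is part of what you are proving, so the inference ``bounded stable Ext \(\Rightarrow\) bounded Betti numbers'' is circular. For non-Gorenstein rings the two sides can differ drastically. For \(\C[x,y]/(x,y)^2\) one has \(\Omega k\cong k^2\) and \(\dim\operatorname{Ext}^p(k,k)=2^p\), while \(\operatorname{End}_{\Dsg}(k)\) is already infinite-dimensional.

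The clean repair is to use periodicity of the \emph{object} rather than dimension counts. The equivalence gives \(k\cong k[2]\), i.e.\ \(\Omega^2k\cong k\), in \(\Dsg(S)\). Since \(\Hom_{\Dsg(S)}(M,N)=\varinjlim_i\underline{\Hom}_S(\Omega^iM,\Omega^iN)\) for finitely generated modules, this yields \(\Omega^{j+2}k\cong\Omega^jk\) up to free summands for some \(j\). Hence the Betti numbers of \(k\) are eventually \(2\)-periodic. The Betti-number characterization you already quote then makes \(S\) a (non-regular) hypersurface, with no comparison between stable and absolute Ext needed.

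\textbf{Step 2.} The assertion \([1]\not\simeq\id\) on a nonzero singularity category is false on the triangulated level. Take \(A_f=\C[[x]]/(x^2)\), or its Kn\"orrer stabilization \(\C[[x,y,z]]/(x^2+y^2+z^2)\). Then \(\Dsg(A_f)\simeq\underline{\operatorname{mod}}\,\C[x]/(x^2)\simeq\operatorname{vect}_\C\), and every \(\C\)-linear autoequivalence of \(\operatorname{vect}_\C\), in particular \([1]\), is isomorphic to the identity. In exactly such cases, comparing Serre functors on homotopy categories carries no parity information. Step~3 then cannot start, because adding an odd number of squares is not a Kn\"orrer equivalence.

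The argument must be run with Serre \emph{bimodules}, and two inputs are needed.
\begin{itemize}
\item[(a)] The bimodule-level Calabi--Yau statement: the Serre bimodule of \(\Dsg^{\dgcat}\) of an isolated hypersurface of Krull dimension \(r\) is the diagonal bimodule shifted by \(r-1\), together with bimodule-level \(2\)-periodicity. This is an input beyond Auslander--Reiten duality on \(H^0\).
\item[(b)] For a nonzero dg category \(\mathcal A\), the diagonal bimodule is never isomorphic to \(\mathcal A[1]\). Such an isomorphism is a unit \(\theta\in\HH^1(\mathcal A)\), so \(\theta^2\) would be a unit of \(\HH^2(\mathcal A)\). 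But graded commutativity of \(\HH^\bullet(\mathcal A)\) gives \(\theta^2=-\theta^2=0\).
\end{itemize}
With these two repairs, your Step~3 goes through as written.
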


In Theorem~\ref{thm:hua-keller}, fixing the number of variables, equivalently
the embedding dimension for a non-linear hypersurface equation, excludes the
even Kn\"orrer ambiguity described in Theorem~\ref{thm:kalck}.  If the Krull
dimensions are equal, no quadratic variables can be added.

\subsection{From a graded category to the completed cone}
\begin{proposition}\label{prop:common-local}
Let \(f\) and \(g\) be weighted homogeneous equations of the same degree in the
same weighted polynomial ring, with isolated cone singularities.  Suppose
\[
 F:\mathcal C_f\simeq_{\Mor}\mathcal C_g
\]
is equipped with an isomorphism \(F\rho_f\cong\rho_gF\).  Then
\begin{equation}\label{eq:local-equivalence}
 \Dsg^{\dgcat}(A_f)\simeq_{\Mor}\Dsg^{\dgcat}(A_g),
 \qquad A_f\cong A_g.
\end{equation}
If \(\gcd(d,W)=1\), the compatibility with \(\rho\) is automatic.
\end{proposition}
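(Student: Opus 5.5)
The plan is to pass from $F$ to an equivalence of dg orbits, then apply Proposition~\ref{prop:orbit-completion} on both sides, and finish with Theorem~\ref{thm:hua-keller}.

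\emph{Step 1: equivalence of orbits.} Replace $\mathcal C_f$ and $\mathcal C_g$ by cofibrant Morita-fibrant models, so that $F$ is a dg functor that is a quasi-equivalence. Fix a dg representative of the intertwining isomorphism $\theta:F\rho_f\Rightarrow\rho_gF$. Iterating $\theta$ by the cocycle convention gives closed degree-zero isomorphisms $\theta_j:F\rho_f^j\Rightarrow\rho_g^jF$ for all $j\in\mathbb Z$. Define $\bar F:\mathcal C_f/\rho_f\to\mathcal C_g/\rho_g$ to agree with $F$ on objects, and on the $j$-th summand of \eqref{eq:dg-orbit} by
\[
 h\longmapsto \theta_{j,N}\circ F(h).
\]
The cocycle identity $\theta_{i+j}=\rho_g^i(\theta_j)\circ\theta_i\rho_f^j$, together with \eqref{eq:orbit-composition}, shows that $\bar F$ respects composition, at least up to coherent homotopy. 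On each summand $\bar F$ is a quasi-isomorphism, since $F$ is quasi-fully faithful and $\theta_j$ is invertible. Hence $\bar F$ is quasi-fully faithful. It is essentially surjective on $H^0$ because $F$ is. Lemma~\ref{lem:shifted-H0} then shows that $\bar F$ induces a quasi-equivalence $\Perf_{\dgcat}(\mathcal C_f/\rho_f)\simeq\Perf_{\dgcat}(\mathcal C_g/\rho_g)$.

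\emph{Step 2: identify the orbits with the local categories.} By \eqref{eq:mf-sg}, $\mathcal C_f$ is Morita equivalent to $\Dsg^{\gr,\dgcat}(R_f)$, and $\rho_f$ corresponds to $(1)$. The orbit construction is invariant under quasi-equivalences that intertwine the autoequivalences, by the same argument as in Step~1. The isolated-cone hypothesis is exactly what Proposition~\ref{prop:orbit-completion} requires. Applying that proposition to $f$ and to $g$ and composing gives
\[
 \Dsg^{\dgcat}(A_f)\simeq_{\Mor}\Dsg^{\dgcat}(A_g).
\]
Since $f$ and $g$ lie in the same number of variables, Theorem~\ref{thm:hua-keller} yields $A_f\cong A_g$.

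\emph{Step 3: the coprime case.} When $\gcd(d,W)=1$, Proposition~\ref{prop:intrinsic-shift} provides the intertwining isomorphism canonically, so the compatibility hypothesis holds automatically.

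\emph{Main obstacle.} The main difficulty is Step~1, and specifically making $\bar F$ a strict dg functor. If $\theta$ is only a homotopy-coherent natural isomorphism, composition in the orbit is preserved only up to homotopy. My first approach is to rigidify:
\begin{itemize}[label=--]
 \item realize $F$ as a bimodule;
 \item realize the intertwining as an isomorphism of bimodules $F\otimes\rho_f\simeq\rho_g\otimes F$;
 \item form orbit categories as $\mathbb Z$-graded dg categories, or as smash products with $\mathbb Z$-actions in the sense of Keller's orbit categories.
\end{itemize}
Then $\bar F$ is induced by an equivariant bimodule and is a Morita functor by construction. Alternatively, one can strictify the $\mathbb Z$-action: a weak $\mathbb Z$-action on a dg category can be replaced by a strict one, because $\mathbb Z$ is free. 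Then $F$ can be replaced by a strictly equivariant quasi-equivalence, and the formula above becomes strictly functorial.
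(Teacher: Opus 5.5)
Your proposal is correct and follows the paper's proof. You use the same orbit functor $h\mapsto\theta_{j}\circ F(h)$, justified by the cocycle identity, then Proposition~\ref{prop:orbit-completion} on both sides, Theorem~\ref{thm:hua-keller} with the number of variables fixed, and Proposition~\ref{prop:intrinsic-shift} in the coprime case. The differences are minor: you obtain the orbit quasi-equivalence summand by summand plus essential surjectivity rather than via a quasi-inverse, and you make explicit the strictification of the intertwining isomorphism and the $\MF$/$\Dsg$ identification compatible with $(1)$, which the paper absorbs into its standing conventions on cofibrant models and strict cocycle iterates.
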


\begin{proof}
Choose a dg quasi-equivalence representing \(F\) and an enhanced natural
isomorphism
\(\eta:F\rho_f\xrightarrow{\sim}\rho_gF\).  Define
\(\eta_j:F\rho_f^j\xrightarrow{\sim}\rho_g^jF\) recursively by
\[
 \eta_0=\id_F,\qquad
 \eta_{j+1}=(\rho_g^j\eta)\circ(\eta_j\rho_f)\quad(j\ge0),
\]
and define \(\eta_j\) for \(j<0\) from the inverse of \(\eta\).  The orbit
functor \(F_{\mathrm{orb}}:\mathcal C_f/\rho_f\to
\mathcal C_g/\rho_g\) is given by
\begin{equation}\label{eq:induced-orbit-functor}
 F_{\mathrm{orb}}(h_j)=\eta_{j,E'}\circ F(h_j)
 \quad\text{for }h_j:E\longrightarrow\rho_f^jE'.
\end{equation}
The cocycle identity
\[
 \eta_{i+j}=(\rho_g^i\eta_j)\circ(\eta_i\rho_f^j)
\]
and formula~\eqref{eq:orbit-composition} imply
\(F_{\mathrm{orb}}(k_j\star h_i)=
F_{\mathrm{orb}}(k_j)\star F_{\mathrm{orb}}(h_i)\); hence
\eqref{eq:induced-orbit-functor} is a dg functor.  Applying the same construction
to a quasi-inverse of \(F\) proves that \(F_{\mathrm{orb}}\) is a Morita
equivalence.  Therefore
\[
 \Perf_{\dgcat}(\mathcal C_f/\rho_f)
 \simeq_{\Mor}
 \Perf_{\dgcat}(\mathcal C_g/\rho_g).
\]
Proposition~\ref{prop:orbit-completion}, applied to \(R_f\) and \(R_g\), gives
the first equivalence in \eqref{eq:local-equivalence}.

Both \(A_f\) and \(A_g\) are complete local hypersurface algebras of Krull
dimension \(n+1\) and embedding dimension \(n+2\).  Their singularities are
isolated because the punctured affine cones are smooth.  Thus all hypotheses of
Theorem~\ref{thm:hua-keller} hold, and that theorem gives \(A_f\cong A_g\).
If \(\gcd(d,W)=1\), Proposition~\ref{prop:intrinsic-shift} constructs \(\eta\)
from the isomorphism intertwining the Serre functors, so no compatibility with \(\rho\) has to be
assumed separately.
\end{proof}

\subsection{Reconstruction of ordinary projective hypersurfaces}
\begin{lemma}\label{lem:ordinary-globalization}
Let \(f,g\in\C[x_0,\ldots,x_N]\) be nonzero homogeneous polynomials of the same
ordinary degree \(d\ge2\).  If
\[
 \frac{\C[[x_0,\ldots,x_N]]}{(f)}
 \cong
 \frac{\C[[x_0,\ldots,x_N]]}{(g)}
\]
as complete local \(\C\)-algebras, then there are
\(L\in\operatorname{GL}_{N+1}(\C)\) and \(c\in\C^*\) such that
\begin{equation}\label{eq:ordinary-globalization}
 g(Lx)=cf(x).
\end{equation}
\end{lemma}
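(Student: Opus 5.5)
A \(\C\)-algebra isomorphism \(\varphi\) between the two completed rings lifts to a formal contact equivalence of the equations, and I would compare lowest-order terms. First, lift \(\varphi\) to the ambient power series ring. The map \(\varphi\colon\C[[x]]/(f)\to\C[[x]]/(g)\) sends each \(x_i\) to the class of some \(\phi_i\in\m\). Since \(\C[[x]]\) is formally free, \(x_i\mapsto\phi_i\) defines a continuous endomorphism \(\Phi\) of \(\C[[x]]\) lifting \(\varphi\) in the correct direction; I will arrange the lift so that \(\Phi(f)\in(g)\). Because \(f,g\in\m^2\) and the two rings have the same embedding dimension \(N+1\), the induced map on cotangent spaces \(\m/\m^2\) is an isomorphism. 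This is the key point: the maximal ideal of \(\C[[x]]/(f)\) has \(\m/(\m^2+(f))=\m/\m^2\), of dimension \(N+1\). Hence the linear part \(L\) of \(\Phi\) is invertible, and \(\Phi\) is an automorphism by the formal inverse function theorem.

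Next, I would upgrade \(\Phi(f)\in(g)\) to \(\Phi(f)=ug\) with \(u\) a unit. Applying the same construction to \(\varphi^{-1}\) gives an automorphism \(\Psi\) with \(\Psi(g)\in(f)\). Since \(\Phi\) is an automorphism, \(\Phi\) induces an isomorphism \(\C[[x]]/(f)\cong\C[[x]]/(\Phi(f))\), and by construction this agrees with \(\varphi\) composed with a surjection onto \(\C[[x]]/(g)\). So \((\Phi(f))\subseteq(g)\), and the quotient map \(\C[[x]]/(\Phi(f))\to\C[[x]]/(g)\) is an isomorphism, which forces \((\Phi(f))=(g)\). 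Cleanly: \(\varphi\) corresponds to \(\Phi\) with \(\Phi((f))=(g)\) because \(\varphi\) is bijective. In a domain, equality of principal ideals gives \(\Phi(f)=ug\) for some unit \(u\in\C[[x]]^*\).

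Finally, I compare initial forms. Write \(\Phi(x)=Lx+(\text{higher order})\). Since \(f\) is homogeneous of degree \(d\), \(\Phi(f)=f(Lx)+(\text{terms of order}>d)\), and \(f(Lx)\neq0\) because \(L\) is invertible. On the other side, \(ug=u(0)g+(\text{order}>d)\). Taking degree-\(d\) parts gives \(f(Lx)=u(0)g(x)\) with \(u(0)\in\C^*\). Replacing \(L\) by \(L^{-1}\) and setting \(c=u(0)^{-1}\) yields \(g(L^{-1}x)\cdot c^{-1}\)-type identity. After relabeling, this is \(g(L'x)=c'f(x)\), as in \eqref{eq:ordinary-globalization}.

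The main obstacle is the first step: ensuring that the lift has invertible linear part. This is where \(d\ge2\) enters, since it forces \(f,g\in\m^2\) and makes \(\m/\m^2\) of the quotient equal to the full ambient cotangent space. The rest is bookkeeping with initial forms. For \(d=1\) the argument would fail, since then any two linear forms give isomorphic rings; the hypothesis is exactly what excludes this.
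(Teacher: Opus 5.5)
Your proposal is correct and follows essentially the same route as the paper: lift \(\varphi\) to a substitution endomorphism of \(\C[[x_0,\ldots,x_N]]\), use \(f,g\in\mathfrak n^2\) to see that its linear part \(L\) is invertible and hence that it is an automorphism, deduce \(\Phi(f)=ug\) with \(u\) a unit from \(\Phi((f))=(g)\) in a domain, and compare degree-\(d\) components to get \(f(Lx)=u(0)g(x)\) before inverting \(L\). The auxiliary lift \(\Psi\) of \(\varphi^{-1}\) is never needed, and your closing remark overstates the role of \(d\ge2\): for \(d=1\) the conclusion still holds trivially because \(\operatorname{GL}_{N+1}(\C)\) acts transitively on nonzero linear forms, and only this particular argument breaks down.
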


\begin{proof}
Put \(Q=\C[[x_0,\ldots,x_N]]\), let \(\mathfrak n=(x_0,\ldots,x_N)\), and
write \(q_f:Q\to Q/(f)\) and \(q_g:Q\to Q/(g)\) for the quotient maps.  Denote
the maximal ideals of the quotient rings by
\[
 \mathfrak m_f=\mathfrak n/(f),\qquad
 \mathfrak m_g=\mathfrak n/(g).
\]
Let
\(\phi:Q/(f)\xrightarrow{\sim}Q/(g)\).  Choose
\(u_i\in\mathfrak n\) satisfying
\(q_g(u_i)=\phi(q_f(x_i))\), and define the continuous homomorphism
\[
 U:Q\longrightarrow Q,\qquad U(x_i)=u_i.
\]
It satisfies \(q_gU=\phi q_f\).

Because \(f,g\in\mathfrak n^2\), the cotangent spaces of the two quotient
rings are both canonically of dimension \(N+1\):
\[
 \mathfrak m_f/\mathfrak m_f^2\cong
 \mathfrak n/\mathfrak n^2\cong
 \mathfrak m_g/\mathfrak m_g^2.
\]
The map induced by \(\phi\) on cotangent spaces is invertible.  Consequently
the matrix \(L\) of the linear terms of \((u_0,\ldots,u_N)\) is invertible.
The formal inverse function theorem now gives a continuous automorphism
\(U\in\operatorname{Aut}_{\C}(Q)\).

Taking kernels in \(q_gU=\phi q_f\) gives
\[
 U^{-1}((g))=(f),\qquad\text{hence}\qquad (U(f))=(g).
\]
Since \(Q\) is a domain and both ideals are principal, there is a unit
\(v\in Q^*\) such that
\begin{equation}\label{eq:formal-contact}
 U(f)=v g.
\end{equation}
Write \(U(x)=Lx+H(x)\), where every component of \(H\) belongs to
\(\mathfrak n^2\), and write \(v=c+v_+\) with \(c\in\C^*\) and
\(v_+\in\mathfrak n\).  Since \(f\) and \(g\) are homogeneous of degree
\(d\), the degree-\(d\) homogeneous component of the two sides of
\eqref{eq:formal-contact} is
\[
 f(Lx)=c g(x).
\]
Replacing \(L\) by \(L^{-1}\) and rescaling \(c\) gives the form
\(g(Lx)=cf(x)\) in \eqref{eq:ordinary-globalization}.
\end{proof}

\begin{theorem}\label{thm:common-ordinary}
Let \(f,g\in\C[x_0,\ldots,x_{n+1}]_d\) define smooth projective hypersurfaces.
An enhanced equivalence between \(\mathcal C_f\) and \(\mathcal C_g\)
that intertwines the internal grading shifts implies \(X_f\cong X_g\) by a
projective linear transformation.  Under \(\gcd(d,n+2)=1\), every enhanced
equivalence has the required compatibility.
\end{theorem}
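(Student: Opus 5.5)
The plan is to chain together results already established: Proposition~\ref{prop:common-local} passes from the graded categories to the completed cones, and Lemma~\ref{lem:ordinary-globalization} passes from the completed cones to the projective hypersurfaces.

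First I check the hypotheses of Proposition~\ref{prop:common-local} in the ordinary setting. All weights are one, so $W=n+2$. Since $X_f$ is smooth, the partials $\partial_i f$ have no common zero on $\mathbb P^{n+1}$. By the Euler identity $d f=\sum_i x_i\partial_i f$, the affine cone $\Spec R_f$ is then smooth away from $\m$. The proof of Lemma~\ref{lem:ordinary-globalization} uses $f,g\in\m^2$, so I must also confirm that the vertex is a genuine singularity. For $d\ge2$, a nonzero form of degree $d$ lies in $\m^d\subset\m^2$, and $\partial_i f(0)=0$; the case $d=1$ is excluded implicitly, because Lemma~\ref{lem:ordinary-globalization} needs $d\ge2$. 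I would state explicitly that the theorem is read with $d\ge2$, since a linear equation gives $\mathcal C_f\simeq0$. With these checks, $A_f$ and $A_g$ are isolated complete local hypersurface singularities in the same $n+2$ variables.

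Next, given an enhanced equivalence $F:\mathcal C_f\simeq_{\Mor}\mathcal C_g$ together with $F\rho_f\cong\rho_gF$, Proposition~\ref{prop:common-local} yields $\Dsg^{\dgcat}(A_f)\simeq_{\Mor}\Dsg^{\dgcat}(A_g)$. Through Theorem~\ref{thm:hua-keller} it then gives an isomorphism of complete local $\C$-algebras $A_f\cong A_g$. Lemma~\ref{lem:ordinary-globalization}, applied with $N=n+1$, produces $L\in\operatorname{GL}_{n+2}(\C)$ and $c\in\C^*$ with $g(Lx)=cf(x)$. The linear automorphism $x\mapsto Lx$ of $\mathbb P^{n+1}$ therefore carries $V(f)$ isomorphically onto $V(g)$, with the direction fixed by the lemma's convention. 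This gives $X_f\cong X_g$ by a projective linear transformation.

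For the final sentence of the theorem, assume $\gcd(d,n+2)=1$, which is $\gcd(d,W)=1$. Proposition~\ref{prop:intrinsic-shift} then supplies the intertwining isomorphism $F\rho_f\cong\rho_gF$ canonically from $FS_{\mathcal C_f}\cong S_{\mathcal C_g}F$, so the compatibility need not be assumed. The only point needing care is that this isomorphism is enhanced, that is, an isomorphism of dg functors and not just of triangulated ones. The orbit construction in Proposition~\ref{prop:common-local} requires such data.

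I expect this to be the main subtlety, and I would handle it as follows. The Serre functor is represented by the dual diagonal bimodule, so the comparison $FS\cong SF$ holds as an isomorphism of bimodules, hence in the homotopy category of dg functors. Powers and cohomological shifts preserve this. The isomorphisms $\rho_f^d\simeq[2]$ and \eqref{eq:serre-full} are themselves bimodule isomorphisms. Everything else in the argument is a direct citation of the earlier results.
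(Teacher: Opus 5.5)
Your proposal is correct and follows the paper's proof step for step: Proposition~\ref{prop:common-local} with Theorem~\ref{thm:hua-keller} gives $A_f\cong A_g$, Lemma~\ref{lem:ordinary-globalization} gives $g(Lx)=cf(x)$, and Proposition~\ref{prop:intrinsic-shift} supplies the intertwining isomorphism in the coprime case. The paper leaves your hypothesis checks implicit (isolatedness of the vertex via the Euler identity, and $d\ge2$); the case $d=1$ need not be excluded, since any two hyperplanes are projectively equivalent, so the conclusion holds trivially there.
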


\begin{proof}
By Proposition~\ref{prop:common-local}, the equivalence intertwining the internal grading shifts induces
\(\Dsg^{\dgcat}(A_f)\simeq_{\Mor}\Dsg^{\dgcat}(A_g)\), and
Theorem~\ref{thm:hua-keller} gives an isomorphism of complete local
\(\C\)-algebras \(A_f\cong A_g\).  Lemma~\ref{lem:ordinary-globalization}
then produces \(L\in\operatorname{GL}_{n+2}(\C)\) and \(c\in\C^*\) with
\(g(Lx)=cf(x)\).  Therefore the projectivization \([L]\) maps the zero locus of
\(f\) isomorphically onto the zero locus of \(g\).  If
\(\gcd(d,n+2)=1\), Proposition~\ref{prop:intrinsic-shift} gives the required isomorphism intertwining the internal grading shifts
for any enhanced equivalence.
\end{proof}

\section{The Euler-equivariant weighted reconstruction}
\label{sec:weighted-engine}

\subsection{The dual action on the orbit}
For \(\mathcal C_f=\Dsg^{\gr,\dgcat}(R_f)\), define a \(\Gm\)-action on the dg
orbit by
\begin{equation}\label{eq:dual-action}
 \chi_f(\lambda)(h_j)=\lambda^{-j} h_j,
 \qquad h_j\in\Hom_{\mathcal C_f}(E,\rho_f^jF).
\end{equation}
The sign of the exponent is determined by the convention \(M(1)_q=M_{q+1}\).  Since
\((k_j\star h_i)\) has exponent \(i+j\), formula
\eqref{eq:orbit-composition} implies
\(\chi_f(\lambda)(k_j\star h_i)=
\chi_f(\lambda)(k_j)\star\chi_f(\lambda)(h_i)\).  Thus \(\chi_f\) is an
action by dg automorphisms and extends functorially to the perfect dg hull.  The
completed cone carries the Euler action
\begin{equation}\label{eq:euler-action}
 \epsilon_f(\lambda)(x_i)=\lambda^{w_i}x_i.
\end{equation}

\begin{proposition}
\label{prop:equivariant-orbit}
The equivalence in Proposition~\ref{prop:orbit-completion} is equivariant for
the dual action \eqref{eq:dual-action} and the Euler action
\eqref{eq:euler-action}.  An enhanced equivalence intertwining the internal grading shifts induces
an Euler-equivariant equivalence of the completed dg singularity categories.
\end{proposition}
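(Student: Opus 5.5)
We trace the Euler action through each functor used in the proof of Proposition~\ref{prop:orbit-completion}, namely
\[
 \mathcal O_f\xrightarrow{U_f}\Dsg^{\dgcat}(R_f)\xrightarrow{L_f}\Dsg^{\dgcat}((R_f)_\m)\xrightarrow{C_f}\Dsg^{\dgcat}(A_f),
\]
and show that each one carries the action it sees on the source to the action on the target, up to canonical isomorphisms. Then we check that the orbit functor of Proposition~\ref{prop:common-local} commutes with the dual actions.

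\emph{Step 1: forgetting the grading.} The Euler action \(\epsilon_f(\lambda)\) is a graded automorphism of \(R_f\), since \(f\) is weighted homogeneous. It acts on the ungraded dg category \(\Dsg^{\dgcat}(R_f)\) by twisting modules along \(\epsilon_f(\lambda)\). For a graded module \(M\), the underlying module \(U_fM\) has a canonical isomorphism
\[
 \epsilon_f(\lambda)^*U_fM\xrightarrow{\sim}U_fM
\]
given by \(m\mapsto\lambda^{q}m\) on \(M_q\). Conjugating a morphism \(h_j:M\to N(j)\) by these isomorphisms multiplies it by \(\lambda^{\pm j}\). With the convention \(M(1)_q=M_{q+1}\), the sign is \(-j\), which matches \eqref{eq:dual-action}. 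These isomorphisms are strictly natural and compatible with composition, so they make \(U_f\) strictly equivariant. To do this at the dg level, we work with the Drinfeld quotient of graded complexes, where the isomorphisms are defined termwise.

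\emph{Step 2: localization and completion.} The ideal \(\m\) is stable under \(\epsilon_f\), so \(\epsilon_f\) extends continuously to \((R_f)_\m\) and to \(A_f\). Base change along an equivariant ring map is compatible with twisting, so \(L_f\) and \(C_f\) are equivariant, again via canonical isomorphisms. The equivariance then passes to \(\pretr(-)^\natural\) by functoriality. This gives the first assertion.

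\emph{Step 3: the induced equivalence.} It remains to see that \(F_{\mathrm{orb}}\) from \eqref{eq:induced-orbit-functor} commutes with \(\chi_f\) and \(\chi_g\). This holds because \(F_{\mathrm{orb}}\) sends the \(j\)-th orbit summand to the \(j\)-th summand: \(\eta_j\) has target \(\rho_g^jF\). Hence it commutes with multiplication by \(\lambda^{-j}\) on the nose. Composing with the equivariant equivalences of Step 2 for \(f\) and \(g\) gives an Euler-equivariant Morita equivalence \(\Dsg^{\dgcat}(A_f)\simeq\Dsg^{\dgcat}(A_g)\).

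The main obstacle is Step 1. One has to ensure the action on \(\Dsg^{\dgcat}(R_f)\) is a genuine dg action, rather than an action up to homotopy, and that the comparison isomorphisms are coherent in \(\lambda\), meaning multiplicative in \(\lambda\mu\). The plan is to check this directly from the explicit formula \(\lambda^q\) on degree-\(q\) pieces, which is multiplicative.
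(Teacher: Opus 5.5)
Your proposal is correct and follows essentially the same route as the paper: the explicit rescaling isomorphism on graded pieces makes \(U_f\) equivariant, localization and completion are equivariant because \(\m\) is Euler-stable, and \(F_{\mathrm{orb}}\) commutes with the dual actions because it maps the \(j\)-th orbit summand to the \(j\)-th summand. One convention to fix: your map \(m\mapsto\lambda^{q}m\) from \(\epsilon_f(\lambda)^*U_fM\) to \(U_fM\) is \(R_f\)-linear, and gives the factor \(\lambda^{-j}\), only if \(\epsilon_f(\lambda)^*\) means extension of scalars along \(\epsilon_f(\lambda)\), which is the paper's \(E_{f,\lambda}=(\epsilon_f(\lambda)^{-1})^*\); if it means restriction along \(\epsilon_f(\lambda)\), the linear map is \(m\mapsto\lambda^{-q}m\) and conjugation gives \(\lambda^{j}\) instead.
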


\begin{proof}
For a graded module \(M=\bigoplus_qM_q\), put
\(E_{f,\lambda}=(\epsilon_f(\lambda)^{-1})^*\).  With the grading convention
fixed above, the map
\begin{equation}\label{eq:euler-linearization}
 \alpha_{\lambda,M}:U_fM\longrightarrow E_{f,\lambda}U_fM,
 \qquad \alpha_{\lambda,M}(m)=\lambda^{-q}m\quad(m\in M_q)
\end{equation}
is \(R_f\)-linear.  Conjugation multiplies a map \(h_j:M\to N(j)\) of internal
degree \(j\) by \(\lambda^{-j}\):
\begin{equation}\label{eq:orbit-euler-calculation}
 \alpha_{\lambda,N}\,U_f(h_j)\,\alpha_{\lambda,M}^{-1}
 =\lambda^{-j}U_f(h_j)
 =U_f\bigl(\chi_f(\lambda)(h_j)\bigr).
\end{equation}
Thus the explicit forgetful dg functor is equivariant.  The maximal ideal is
stable under the Euler action, so localization and completion are equivariant.  The comparison
between matrix factorizations and singularity categories is functorial for
automorphisms of the Landau--Ginzburg pair.  The completion functor in~\cite[Theorem~5.7]{Dyckerhoff} is
induced by the natural inclusion of the endomorphism dg algebras of the
stabilized residue field before and after completion; this inclusion is
Euler-equivariant.  Passage to the perfect dg hull is functorial.

Finally, let \((F,\eta)\) be an enhanced equivalence intertwining the
internal grading shifts.  The orbit functor is given by
\eqref{eq:induced-orbit-functor}; it maps the \(j\)-th summand to the \(j\)-th
summand.  Hence
\[
 F_{\mathrm{orb}}\chi_f(\lambda)(h_j)
 =\lambda^{-j}F_{\mathrm{orb}}(h_j)
 =\chi_g(\lambda)F_{\mathrm{orb}}(h_j).
\]
Transport through the equivariant comparison on the two sides gives the stated
Euler-equivariant Morita equivalence.
\end{proof}

\subsection{Equivariance of the Tjurina algebra isomorphism}
We shall use the following equivariant form of the categorical Mather--Yau
isomorphism.

\begin{lemma}
\label{lem:equivariant-HK}
The isomorphism
\[
 \kappa_f:T_f\xrightarrow{\ \cong\ }
 \HH^0\!\left(\Dsg^{\dgcat}(A_f)\right)
\]
is natural for continuous coordinate automorphisms \(\sigma\) satisfying
\(\sigma((f))=(f)\).  In particular it is equivariant for the Euler action.  An
Euler-equivariant Morita equivalence
\(\Dsg^{\dgcat}(A_f)\simeq_{\Mor}\Dsg^{\dgcat}(A_g)\) therefore induces an
isomorphism \(T_f\cong T_g\) of \(\mathbb Z\)-graded algebras.
\end{lemma}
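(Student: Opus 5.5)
I would construct \(\kappa_f\) explicitly, so that naturality becomes a formal consequence of functoriality. Write \(\MF^{\dgcat}(\widehat P,f)\) for the completed matrix-factorization category. Its Hochschild cochain complex is computed by the Koszul-type model of Dyckerhoff: the mixed complex
\[
 \bigl(\widehat P[y_0,\ldots,y_{n+1}]\text{-type Koszul complex on }\partial_0f,\ldots,\partial_{n+1}f\bigr),
\]
twisted by \(f\). Equivalently, \(\HH^\ast\) is the cohomology of the curved Koszul complex \(K(\partial_0f,\ldots,\partial_{n+1}f;f)\), and its degree-zero part is \(\widehat P/(f,\partial f)=T_f\), as in \cite{HuaKeller}. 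Concretely, the map \(\widehat P\to\HH^0\) sends a power series \(a\) to the natural transformation of \(\id\) given by multiplication by \(a\) on every factorization. This description is the key point: \(\kappa_f\) is the characteristic morphism \(\widehat P/(f)\to Z(H^0)\to\HH^0\), which factors through \(T_f\) and is an isomorphism by Theorem~\ref{thm:hua-keller}.

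Naturality then amounts to a check. Let \(\sigma\) be a continuous automorphism of \(\widehat P\) with \(\sigma((f))=(f)\), so that \(\sigma(f)=vf\) for a unit \(v\). Pullback along \(\sigma\), followed by rescaling one of the two differentials by \(v\), gives a dg autoequivalence \(\sigma_\ast\) of \(\MF^{\dgcat}(\widehat P,f)\); by Dyckerhoff's comparison this is compatible with the induced autoequivalence of \(\Dsg^{\dgcat}(A_f)\). Conjugating the multiplication-by-\(a\) transformation by \(\sigma_\ast\) produces multiplication by \(\sigma(a)\). Hence \(\HH^0(\sigma_\ast)\circ\kappa_f=\kappa_f\circ\sigma\), and \(\sigma\) descends to \(T_f\) because it preserves \((f,\partial f)\) by the chain rule. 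Since \(f\) is weighted homogeneous, the Euler action \eqref{eq:euler-action} satisfies \(\epsilon_f(\lambda)(f)=\lambda^df\), so it is a special case. The induced automorphisms assemble into an algebraic \(\Gm\)-action on the finite-dimensional algebra \(T_f\) whose weight decomposition is the grading by weighted degree.

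For the last assertion, let \(\Psi\) be an Euler-equivariant Morita equivalence, for instance the one produced by Proposition~\ref{prop:equivariant-orbit}. Hochschild cohomology is Morita invariant and functorial, so \(\Psi\) induces an algebra isomorphism \(\HH^0(\Psi)\) intertwining the actions induced by \(\epsilon_f(\lambda)\) and \(\epsilon_g(\lambda)\). Composing gives \(\kappa_g^{-1}\HH^0(\Psi)\kappa_f:T_f\cong T_g\), which is \(\Gm\)-equivariant and therefore preserves the weight decompositions, that is, the \(\mathbb Z\)-gradings.

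I expect the main obstacle to be making the \(\Gm\)-action on \(\HH^0\) well defined. Equivariance of a Morita equivalence is data: one needs isomorphisms \(\Psi\epsilon_f(\lambda)_\ast\cong\epsilon_g(\lambda)_\ast\Psi\), and the induced maps on \(\HH^0\) must not depend on these choices. Because \(\HH^0\) acts through conjugation by automorphisms of the identity functor, any choice of such isomorphisms gives the same map on \(\HH^0\), so strict compatibility is unnecessary. It also remains to check that the action is algebraic in \(\lambda\); this holds because it is visibly polynomial on \(T_f\) via \(\kappa_f\).
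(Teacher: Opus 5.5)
Your proposal is correct and follows essentially the same route as the paper: \(\kappa_f\) is the map induced by the multiplication transformations \(a\mapsto[m_a]\), identified with the Hua--Keller isomorphism. Naturality is the conjugation identity \(\sigma_*\,m_a\,\sigma_*^{-1}=m_{\sigma(a)}\), the Euler action is the case \(\sigma=\epsilon_f(\lambda)\), and the grading is read off from the weight decomposition after transporting through the equivariant Morita equivalence.

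The differences are minor. The paper defines \(\sigma_*=(\sigma^{-1})^*\) directly on the Drinfeld-quotient model of \(\Dsg^{\dgcat}(A_f)\), so it needs neither your rescaling by the unit \(v\) nor a comparison with the \(\mathbb Z/2\)-graded factorization model. You additionally observe, correctly, that the map on \(\HH^0\) does not depend on the chosen isomorphisms \(\Psi\epsilon_f(\lambda)_*\cong\epsilon_g(\lambda)_*\Psi\), since two choices differ by a unit of the commutative algebra \(\HH^0\). One slip to fix: the characteristic morphism goes \(\HH^0\to Z(H^0)\), not the other way, so \([m_a]\) must be defined at the dg level, as your first description in fact does.
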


\begin{proof}
Put \(Q=\C[[x_0,\ldots,x_{n+1}]]\) and \(A=Q/(f)\), and denote the dg
singularity category by \(\mathcal D_f=\Dsg^{\dgcat}(A)\).  We use the
Drinfeld-quotient enhancement
\[
 \mathcal D_f=\Db_{\dgcat}(\operatorname{mod}A)/
 \Perf_{\dgcat}(A).
\]
For \(a\in A\), multiplication on a bounded complex \(M\) defines a closed
degree-zero dg natural transformation
\[
 m_a:\id\longrightarrow\id,
 \qquad (m_a)_M(m)=am.
\]
It preserves perfect complexes and therefore descends to \(\mathcal D_f\).
Consequently there is an algebra homomorphism
\begin{equation}\label{eq:multiplication-center-map}
 \mu_f:A\longrightarrow \HH^0(\mathcal D_f),
 \qquad a\longmapsto[m_a].
\end{equation}

We record the factorization of \eqref{eq:multiplication-center-map}.  Under the
dg equivalence between \(\mathcal D_f\) and the matrix-factorization category
over \(Q\), an object is represented by a finite free matrix factorization
\((E,\delta_E)\) with \(\delta_E^2=f\id_E\).  Differentiating this equality
with respect to \(x_i\) gives
\begin{equation}\label{eq:partial-nullhomotopy}
 \delta_E(\partial_i\delta_E)+(\partial_i\delta_E)\delta_E
 =(\partial_i f)\id_E.
\end{equation}
The left side of \eqref{eq:partial-nullhomotopy} is the differential of the odd
endomorphism \(\partial_i\delta_E\) in the matrix-factorization morphism
complex.  In the Hochschild cochain model of
\cite[Theorem~6.6]{Dyckerhoff}, these homotopies assemble to show
that \(\mu_f\) annihilates \((\partial_0f,\ldots,\partial_{n+1}f)\); of course
it also annihilates \((f)\) because \(f=0\) in \(A\).  The induced map
\begin{equation}\label{eq:kappa-multiplication}
 \overline\mu_f:
 \frac{Q}{(f,\partial_0f,\ldots,\partial_{n+1}f)}
 \longrightarrow \HH^0(\mathcal D_f)
\end{equation}
is the isomorphism \(\kappa_f\) of Theorem~\ref{thm:hua-keller}.  More
precisely, in~\cite[Theorems~5.8 and~5.9]{HuaKeller}, the map
\eqref{eq:kappa-multiplication} is factored through singular Hochschild
cohomology via the canonical morphism
\(\HH^*_{\mathrm{sg}}(A,A)\to\HH^*(\mathcal D_f)\); the stable even
cohomology is then identified with the Tjurina algebra.  Each arrow is induced
by a morphism of bar or Tate resolutions and is functorial for algebra
automorphisms.

Let \(\sigma:Q\to Q\) be continuous and satisfy \(\sigma((f))=(f)\); it
induces an automorphism, again denoted \(\sigma\), of \(A\).  Choose a unit
\(v\in Q^*\) such that \(\sigma(f)=vf\).  For \(h\in Q\), define its Jacobian
ideal by
\[
 J_h=(\partial_0h,\ldots,\partial_{n+1}h).
\]
The chain rule gives
\[
 \sigma(J_f)+(f)=J_{\sigma(f)}+(f)=J_{vf}+(f)=J_f+(f),
\]
and the reverse inclusion is obtained from \(\sigma^{-1}\).  Hence \(\sigma\)
also induces an automorphism of \(T_f\).
Define the dg autoequivalence
\(\sigma_*=(\sigma^{-1})^*\) of \(\mathcal D_f\).  On an \(A\)-module \(M\),
the underlying vector space of \(\sigma_*M\) is \(M\), with action
\(a\cdot_{\sigma_*M}m=\sigma^{-1}(a)m\).  Thus, for every \(a\in A\),
componentwise evaluation gives
\begin{equation}\label{eq:conjugation-multiplication}
 \sigma_*\,m_a\,\sigma_*^{-1}=m_{\sigma(a)}.
\end{equation}
Indeed, multiplication by \(a\) on \(\sigma_*^{-1}M\), followed by
restriction through \(\sigma^{-1}\), is multiplication by \(\sigma(a)\) on
\(M\).  Combining \eqref{eq:kappa-multiplication} and
\eqref{eq:conjugation-multiplication} gives the commutative diagram of
algebras below, where \(\operatorname{Ad}(\sigma_*)\) denotes conjugation by
the autoequivalence \(\sigma_*\):
\begin{equation}\label{eq:HK-naturality-square}
\begin{CD}
 T_f @>{\kappa_f}>> \HH^0(\mathcal D_f)\\
 @V{\sigma}VV @VV{\operatorname{Ad}(\sigma_*)}V\\
 T_f @>{\kappa_f}>> \HH^0(\mathcal D_f).
\end{CD}
\end{equation}

Take \(\sigma=\epsilon_f(\lambda)\).  Weighted homogeneity implies
\(\epsilon_f(\lambda)(f)=\lambda^{\deg(f)}f\), so \((f)\) and the Tjurina
ideal are stable.  If \(\bar a\in(T_f)_q\), then
\(\epsilon_f(\lambda)(\bar a)=\lambda^q\bar a\).  Diagram
\eqref{eq:HK-naturality-square} therefore gives
\begin{equation}\label{eq:HK-weight-calculation}
 \operatorname{Ad}(E_{f,\lambda})\bigl(\kappa_f(\bar a)\bigr)
 =\lambda^q\kappa_f(\bar a).
\end{equation}

Now put \(\mathcal D_g=\Dsg^{\dgcat}(A_g)\), let \(\kappa_g\) denote the
analogue of \(\kappa_f\) for \(g\), and let
\(F:\mathcal D_f\simeq_{\Mor}\mathcal D_g\) be Euler-equivariant.
Choose coherent dg natural isomorphisms
\(\theta_\lambda:F E_{f,\lambda}\xrightarrow{\sim}E_{g,\lambda}F\).
Morita invariance of Hochschild cochains gives an algebra isomorphism
\(F_*:\HH^0(\mathcal D_f)\to\HH^0(\mathcal D_g)\).  Conjugating a
representative natural transformation and inserting \(\theta_\lambda\) and
\(\theta_\lambda^{-1}\) yields
\begin{equation}\label{eq:HH-equivariance}
 F_*\operatorname{Ad}(E_{f,\lambda})
 =\operatorname{Ad}(E_{g,\lambda})F_*.
\end{equation}
Hence
\(\kappa_g^{-1}F_*\kappa_f:T_f\to T_g\) is \(\Gm\)-equivariant by
\eqref{eq:HK-weight-calculation} and \eqref{eq:HH-equivariance}.  Since the
isolated-critical-point hypothesis makes \(T_f\) and \(T_g\) finite-dimensional
rational \(\Gm\)-modules, they decompose as direct sums of their character
spaces.  The equivariant algebra isomorphism maps \((T_f)_q\) onto
\((T_g)_q\) for every \(q\in\mathbb Z\), and is therefore an isomorphism of
graded algebras.
\end{proof}

\subsection{Reconstruction of weighted projective hypersurfaces}
Weighted homogeneity gives the Euler identity
\begin{equation}\label{eq:euler-identity}
 \deg(f)\,f=\sum_{i=0}^{n+1}w_i x_i\partial_i f.
\end{equation}
Since the characteristic is zero, \(f\) lies in its Jacobian ideal, and therefore
\begin{equation}\label{eq:tjurina-milnor}
 T_f\cong
 \frac{P}{(\partial_0f,\ldots,\partial_{n+1}f)}
 =:\Jac(f)
\end{equation}
as weighted graded algebras.  The completed and polynomial quotients agree
because the isolated critical point makes them finite-dimensional.

\begin{proposition}\label{prop:weighted-globalization}
The following consequence of~\cite[Corollary~3]{Ahmed} will be used below.  Let
\(f,g\in P_d\) be weighted homogeneous polynomials with isolated critical
points, for the same positive weights and degree.  If \(T_f\cong T_g\) as
graded algebras, there are a weight-preserving polynomial automorphism \(u\) of
\(P\) and \(c\in\C^*\) such that
\begin{equation}\label{eq:weighted-globalization}
 u(g)=cf.
\end{equation}
\end{proposition}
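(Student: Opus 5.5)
Since \(T_f\cong\Jac(f)\) and \(T_g\cong\Jac(g)\) as weighted graded algebras by \eqref{eq:tjurina-milnor}, the hypothesis becomes a graded isomorphism \(\psi:\Jac(f)\xrightarrow{\sim}\Jac(g)\).  The plan is to convert \(\psi\) into the hypothesis of \cite[Corollary~3]{Ahmed} and read off \eqref{eq:weighted-globalization}.  Ahmed's result is stated through the Jacobian (Milnor) algebra of a quasi-homogeneous isolated singularity, so the first step is bookkeeping: both algebras are finite-dimensional graded quotients of the same weighted polynomial ring \(P\), and \(\psi\) preserves the weight grading.

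The core step lifts \(\psi\) to a weight-preserving automorphism of \(P\).  For each generator I choose \(u_i\in P_{w_i}\) representing \(\psi(\bar x_i)\), using that \(\psi\) maps \(\Jac(f)_{w_i}\) to \(\Jac(g)_{w_i}\).  I then define \(u:P\to P\) by \(x_i\mapsto u_i\).  This map is graded, and it carries the ideal \(J_f\) into \(J_g\) modulo nothing, because the kernel of \(P\to\Jac(g)\) is \(J_g\).

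Invertibility of \(u\) needs an argument.  When \(f\in\m^3\) the linear part of \(u\) is controlled by \(\m/\m^2\).  In general I would instead argue via the socle, or simply invoke Ahmed, who treats exactly this graded lifting.  If \(f\) or \(g\) contains quadratic terms in weight-one variables, the Jacobian algebra loses the corresponding coordinates, so some linear data are not seen in \(\Jac\).  I would handle this by a weighted splitting lemma that separates off the nondegenerate quadratic part, which is determined by its rank.  That rank is fixed because \(\dim\m/(\m^2+J)\) is an invariant of the graded algebra.

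Given a graded automorphism \(u\) with \(u(J_g)=J_f\), the conclusion \(u(g)=cf\) is the content of \cite[Corollary~3]{Ahmed}.  Ahmed's statement is a graded Mather--Yau theorem for quasi-homogeneous germs: the equations are determined up to weight-preserving change of coordinates and scalar.  So I will cite it directly after checking its hypotheses.  These are isolated critical points, which hold by quasismoothness and \(\dim T_f<\infty\), and equal weights and degree.

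The main obstacle I expect is the passage from "formal or holomorphic contact equivalence" in Mather--Yau to a polynomial weight-preserving automorphism.  My plan is to average by the \(\Gm\)-action, i.e., take the weight-graded components of a contact equivalence, as in the proof of Lemma~\ref{lem:ordinary-globalization}.  Replacing the automorphism by its weight-zero part, meaning the component sending \(x_i\) to degree \(w_i\), gives a graded automorphism \(u\) with invertible associated graded.  Comparing the degree-\(d\) components of \(U(g)=vf\) then yields \(u(g)=cf\), where \(c\) is the constant term of \(v\).
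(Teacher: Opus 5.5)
Your skeleton (pass to the graded Milnor algebras by the Euler identity, then invoke \cite[Corollary~3]{Ahmed}) is the paper's. However, two of the steps you add around it would fail as written.

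First, the lift \(u\) of \(\psi\) that you construct need not satisfy \(u(g)=cf\), and the corollary does not assert this for that particular \(u\). Take \(f=x^3+y^3\) and \(g=2x^3+y^3\). Then \(J_f=J_g=(x^2,y^2)\), so \(u=\id\) lifts the identity of \(T_f=T_g\), yet \(g\) is not a multiple of \(f\). A lift is determined only up to automorphisms preserving the Jacobian ideal, and the corollary produces a different weight-preserving automorphism. The lifting and splitting-lemma discussion therefore does no work and should be dropped. Incidentally, quadratic monomials \(x_ix_j\) occur whenever \(w_i+w_j=d\), not only for weight-one variables; an example is \(z^2\) in \(z^2-F_{2n}(x)\) with \(\deg z=n\).

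Second, the truncation you propose for the ``main obstacle'' does not carry over from Lemma~\ref{lem:ordinary-globalization}. There it works because each \(U(x_i)\in\m\) has no component of weight below \(1\), so the weight-\(d\) part of \(U(f)\) is exactly \(f(Lx)\). With unequal weights, a formal right or contact equivalence may contain monomials of weight \(<w_i\) in \(U(x_i)\). Products of lower- and higher-weight components then contribute to the weight-\(d\) part.

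For instance, take weights \((1,3)\), \(d=4\) and \(f=g=xy\). The swap \(U(x)=y\), \(U(y)=x\) satisfies \(U(f)=f\), but its weight-preserving part sends both \(x\) and \(y\) to \(0\). It is not an automorphism and does not carry \(g\) to \(cf\).

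Truncation therefore needs an equivalence that already respects the weight filtration, which plain Mather--Yau does not provide; supplying that is precisely the content of the graded statement. The paper avoids the issue. The coordinate change given by Ahmed's corollary already sends \(x_i\) to a weighted homogeneous series of weight \(w_i\). Since the weights are positive, there are only finitely many monomials of each weight, so the change and its inverse are polynomial and no truncation is needed.
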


\begin{proof}
Equation~\eqref{eq:tjurina-milnor} identifies the Tjurina algebras with the
graded Milnor algebras.  By~\cite[Corollary~3]{Ahmed}, an isomorphism of
these graded algebras implies right equivalence by a change of coordinates
preserving the weights.  Each coordinate of weight
\(w_i\) is a weighted homogeneous power series of that weight.  Positivity of
the weights leaves only finitely many monomials of a fixed weight, so the
coordinate change and its inverse are polynomial.  This gives a
weight-preserving polynomial automorphism; allowing multiplication by a
nonzero scalar gives the form stated in~\eqref{eq:weighted-globalization}.
\end{proof}

\begin{theorem}\label{thm:common-weighted}
Let \(f,g\in P_d\) define quasismooth weighted hypersurface stacks for the same weights and
degree, and assume that neither presentation is a linear cone.  An enhanced
equivalence \(\mathcal C_f\simeq_{\Mor}\mathcal C_g\) intertwining the
internal grading shifts implies that a
weight-preserving polynomial automorphism sends \(g\) to a nonzero scalar
multiple of \(f\).  Under \(\gcd(d,W)=1\), every enhanced equivalence is
automatically compatible with the internal grading shifts.
\end{theorem}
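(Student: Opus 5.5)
The plan is to assemble the results already established, following the proof of Theorem~\ref{thm:common-ordinary} but keeping track of the Euler action.

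\emph{Step 1: hypotheses for the local results.}
Quasismoothness means that \(\Spec R_f\) is smooth away from \(\m\). Since \(f\) is not a linear cone, \(f\in\m^2\), so the vertex is singular. Thus the hypotheses of Proposition~\ref{prop:orbit-completion} hold for \(f\), and the same argument applies to \(g\). Moreover, \(A_f\) and \(A_g\) are isolated complete local hypersurface singularities in the same \(n+2\) variables. In particular \(T_f\) and \(T_g\) are finite-dimensional.

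\emph{Step 2: an Euler-equivariant local equivalence.}
Let \((F,\eta)\) be the given enhanced equivalence with \(F\rho_f\cong\rho_gF\). When \(\gcd(d,W)=1\), such an \(\eta\) is supplied by Proposition~\ref{prop:intrinsic-shift}, which proves the last sentence of the theorem. Form the orbit functor \(F_{\mathrm{orb}}\) as in the proof of Proposition~\ref{prop:common-local}. By Proposition~\ref{prop:equivariant-orbit}, after passing to perfect hulls and applying Proposition~\ref{prop:orbit-completion}, it yields a Morita equivalence
\[
\Dsg^{\dgcat}(A_f)\simeq_{\Mor}\Dsg^{\dgcat}(A_g)
\]
that intertwines the Euler actions \(\epsilon_f\) and \(\epsilon_g\).

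\emph{Step 3: graded Tjurina algebras.}
Lemma~\ref{lem:equivariant-HK} turns the Euler-equivariant equivalence of Step~2 into an isomorphism \(T_f\cong T_g\) of \(\mathbb Z\)-graded algebras. Because the weights are positive and \(\deg f=\deg g=d\), the Euler identity~\eqref{eq:euler-identity} gives the graded identification~\eqref{eq:tjurina-milnor} of each Tjurina algebra with its Milnor algebra \(\Jac\).

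\emph{Step 4: globalization.}
Proposition~\ref{prop:weighted-globalization} now gives a weight-preserving polynomial automorphism \(u\) of \(P\) and \(c\in\C^*\) with \(u(g)=cf\), which is the assertion of the theorem.

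\emph{Main obstacle.}
The delicate point is Step~2. One must check that the Euler equivariance of Proposition~\ref{prop:equivariant-orbit} passes through the Morita equivalences, in the form of coherent isomorphisms \(\theta_\lambda\), and not only on objects. The action on \(\HH^0\) must also be the conjugation \(\operatorname{Ad}(E_{f,\lambda})\) used in Lemma~\ref{lem:equivariant-HK}, including the sign convention \(\lambda^{-j}\) versus \(\lambda^{q}\).

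I would handle this by noting two facts. First, \(F_{\mathrm{orb}}\) preserves the orbit summands, so it strictly commutes with the dual actions \(\chi_f\) and \(\chi_g\). Second, the comparison with the completed category conjugates \(\chi\) to \(\operatorname{Ad}(E_\lambda)\) by~\eqref{eq:orbit-euler-calculation}. Consequently the grading on \(\HH^0\) induced from the two sides agrees up to at most an overall sign \(q\mapsto -q\). Any such sign is applied identically on the \(f\) and \(g\) sides, so it cannot affect whether the resulting isomorphism \(T_f\cong T_g\) is graded.
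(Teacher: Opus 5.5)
Your proposal is correct and follows the paper's proof step for step. You form the orbit functor from \((F,\eta)\) and obtain the Euler-equivariant local equivalence via Proposition~\ref{prop:equivariant-orbit}. You then get the graded Tjurina isomorphism from Lemma~\ref{lem:equivariant-HK}, pass to Milnor algebras by the Euler identity, and conclude with Proposition~\ref{prop:weighted-globalization}, using Proposition~\ref{prop:intrinsic-shift} for the coprime case. Your worry about a global sign \(q\mapsto -q\) is harmless, and in fact unnecessary, since \eqref{eq:orbit-euler-calculation} matches the dual action with \(\operatorname{Ad}(E_{\lambda})\) exactly and the same convention is used for both \(f\) and \(g\).
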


\begin{proof}
Let \((F,\eta)\) be the enhanced equivalence intertwining the internal
grading shifts.  Formula
\eqref{eq:induced-orbit-functor} gives an equivalence of dg orbits, and
Proposition~\ref{prop:equivariant-orbit} identifies its perfect hull with an
Euler-equivariant Morita equivalence
\[
 \Dsg^{\dgcat}(A_f)\simeq_{\Mor}\Dsg^{\dgcat}(A_g).
\]
By Lemma~\ref{lem:equivariant-HK}, the induced map on \(\HH^0\), conjugated by
\(\kappa_f\) and \(\kappa_g\), is an isomorphism
\(T_f\xrightarrow{\sim}T_g\) in the category of graded algebras.
Proposition~\ref{prop:weighted-globalization} therefore supplies a
weight-preserving \(u\in\operatorname{Aut}_{\C}(P)\) and \(c\in\C^*\) with
\(u(g)=cf\).  Since \(u(\lambda^{w_i}x_i)=\lambda^{w_i}u(x_i)\) coordinatewise,
\(u\) is \(\Gm\)-equivariant on \(\mathbb A^{n+2}\setminus\{0\}\), and hence
descends to an isomorphism of the quotient hypersurface stacks.  If
\(\gcd(d,W)=1\), Proposition~\ref{prop:intrinsic-shift} constructs \(\eta\)
from an arbitrary enhanced equivalence.
\end{proof}

\section{Fano hypersurfaces}\label{sec:fano}

\subsection{Passage from the Kuznetsov component}
Assume \(a=W-d>0\).  By
\cite[Theorems~2.5(i) and~3.10]{Orlov}, there is a semiorthogonal decomposition
of the Fano hypersurface \(X_f\),
\begin{equation}\label{eq:orlov-fano}
 \Db(X_f)=
 \langle\mathcal{K}u(X_f),\mathcal O_{X_f},\mathcal O_{X_f}(1),\ldots,
 \mathcal O_{X_f}(a-1)\rangle.
\end{equation}
The equivalence for equivariant factorizations in
\cite[Theorem~6.13]{BFK} identifies the Kuznetsov component with the full
graded matrix-factorization category:
\[
 \mathcal{K}u(X_f)_{\dgcat}\simeq_{\Mor}
 \mathcal C_f=\MF^{\gr,\dgcat}(P,f).
\]
Under this equivalence, the geometric rotation
\[
 \rho_f(F)=\mathbf L_{\mathcal O_{X_f}}
 \bigl(F\otimes\mathcal O_{X_f}(1)\bigr)
\]
corresponds to the internal grading shift
\((1):\mathcal C_f\to\mathcal C_f\).  Transporting
\eqref{eq:periodicity} and \eqref{eq:serre-full} through this equivalence gives
the following identities, where \(S_f\) denotes the Serre functor of the
Kuznetsov component \(\mathcal{K}u(X_f)\), and \(S_g\) has the analogous
meaning for \(X_g\):
\begin{equation}\label{eq:fano-identities}
 \rho_f^d\simeq[2],\qquad
 S_f\simeq\rho_f^{-a}[n].
\end{equation}

If \(\gcd(d,W)=1\), choose \(r,s\in\mathbb Z\) with \(ra+sd=1\).  Then
\begin{equation}\label{eq:fano-intrinsic}
 \rho_f\simeq S_f^{-r}[rn+2s].
\end{equation}
Thus an arbitrary enhanced equivalence of Kuznetsov components preserves
rotation because it preserves the Serre functor.

\subsection{Ordinary Fano hypersurfaces}
Let \(f\in\C[x_0,\ldots,x_{n+1}]\) be homogeneous of degree \(d\), assume
\(2\le d<n+2\), and put
\[
 \mathbb P=\mathbb P^{n+1},\qquad
 R_f=\C[x_0,\ldots,x_{n+1}]/(f),\qquad a=n+2-d.
\]
If \(X_f\subset\mathbb P\) is smooth, its affine cone is smooth away
from the vertex.  Since \(d\ge2\), the vertex is singular, so
Proposition~\ref{prop:orbit-completion} applies.

\begin{proposition}
\label{prop:fano-periodicity-serre}
There are isomorphisms of enhanced functors
\[
 \rho_f^d\cong[2],\qquad S_f\cong\rho_f^{-a}[n].
\]
\end{proposition}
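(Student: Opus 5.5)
The plan is to prove both isomorphisms on the matrix-factorization side and then transport them along the identification \(\mathcal{K}u(X_f)_{\dgcat}\simeq_{\Mor}\mathcal C_f\) of \cite[Theorem~6.13]{BFK}. That equivalence carries the geometric rotation \(\rho_f=\mathbf L_{\mathcal O_{X_f}}(-\otimes\mathcal O_{X_f}(1))\) to the internal shift \((1)\). This is part of Orlov's construction \cite[Theorems~2.5(i) and~3.10]{Orlov} and is recalled in the preceding subsection. An enhanced equivalence intertwines Serre functors, by uniqueness of the Serre bimodule. It also commutes strictly with \([1]\). Hence it suffices to establish
\[
 (1)^d\cong[2],\qquad S_{\mathcal C_f}\cong(1)^{-a}[n]
\]
as enhanced functors on \(\mathcal C_f=\MF^{\gr,\dgcat}(P,f)\), with all weights equal to one, \(W=n+2\), and \(a=W-d\).

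For the first isomorphism I use the explicit description of shifts of a graded matrix factorization. Write \(E=(E_0\xrightarrow{\delta_0}E_1\xrightarrow{\delta_1}E_0(d))\). Then \(E[1]\) is \((E_1\xrightarrow{-\delta_1}E_0(d)\xrightarrow{-\delta_0(d)}E_1(d))\), and applying \([1]\) twice returns \(E(d)\) with both differentials unchanged, because the two signs cancel. The identification is a strict dg isomorphism, natural in \(E\) and compatible with morphism complexes. It is therefore an isomorphism of dg functors, which is \eqref{eq:periodicity} at the enhanced level.

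For the Serre functor I invoke \eqref{eq:serre-full}. Graded Auslander--Reiten duality for the graded Gorenstein ring \(R_f\), of Krull dimension \(n+1\), with \(\omega_{R_f}\cong R_f(d-W)\) from \eqref{eq:canonical-module}, gives \(S\cong(d-W)[n]=(1)^{-a}[n]\) on \(\Dsg^{\gr}(R_f)\). The isolated singularity hypothesis holds because \(X_f\) is smooth and \(d\ge2\). To upgrade this to enhanced functors, I would use the fact that \(\mathcal C_f\) is smooth and proper. Its Serre functor is then represented by the dg bimodule \(\mathcal C_f^\vee\), the dual of the diagonal. The duality pairing \(\RHom(E,E')^\vee\simeq\RHom(E',E(d-W)[n])\) is a natural quasi-isomorphism of bimodules, since it comes from the graded residue/trace map on the Koszul-type model of \cite[Proposition~3.7]{LinZhang} and \cite[Section~5.3]{KMVdB}. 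This exhibits the graph bimodule of \((1)^{-a}[n]\) as a representative of the Serre bimodule. By uniqueness of the representing bimodule, \(S_{\mathcal C_f}\cong(1)^{-a}[n]\) in the homotopy category of dg functors.

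The main obstacle is this last bimodule-level upgrade of Auslander--Reiten duality. The triangulated statement is classical, but the enhanced one needs the duality to be natural in both variables at the chain level. My first approach would be to cite the dg statement in \cite[Proposition~3.7]{LinZhang}. If that is insufficient, I would construct the trace \(\Hom(E,E(d-W)[n])\to\C\) directly from the graded Kapustin--Li/residue pairing and check nondegeneracy on cohomology, where it reduces to the triangulated duality.

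A second, geometric check would be useful. Kuznetsov's formula for the Serre functor of the Kuznetsov component is \(S_f^{-1}\cong\rho_f^{a}[-n]\) up to the standard rotation relation \(\rho^{d}\cong[2]\), obtained by iterating mutations through \(\mathcal O,\ldots,\mathcal O(a-1)\). It should agree with the transported formula, which guards against sign or direction errors in the exponent \(-a\).
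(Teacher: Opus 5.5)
Your proposal is correct and follows the paper's proof. Both isomorphisms are proved on \(\MF^{\gr,\dgcat}(P,f)\): the periodicity comes from the explicit double shift \(E[2]=E(d)\), and the Serre functor comes from graded Auslander--Reiten duality with \(\omega_{R_f}\cong R_f(-a)\). Both are then transported along the Orlov--BFK equivalence normalized by \(\Psi_f\rho_f\simeq(1)\Psi_f\).

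The only differences are of emphasis. The paper simply asserts that the duality is a functorial quasi-isomorphism and reads off the Serre functor, while you explicitly flag the bimodule-level naturality needed for the enhanced statement. Your sign convention for \([1]\), which negates both differentials, differs harmlessly from the paper's single sign change.
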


\begin{proof}
Let \(\Psi_f:\mathcal{K}u(X_f)_{\dgcat}\simeq_{\Mor}
\MF^{\gr,\dgcat}(P,f)\) be the equivalence of
\cite[Theorems~2.5(i) and~3.10]{Orlov}, with the dg enhancement provided by
\cite[Theorem~6.13]{BFK}, normalized by
\(\Psi_f\rho_f\simeq(1)\Psi_f\).  A graded factorization is a diagram
\[
 E_0\xrightarrow{\delta_0}E_1
 \xrightarrow{\delta_1}E_0(d).
\]
The dg translation interchanges \(E_0,E_1\) and changes one differential by a
sign.  Applying it twice returns the original differentials and shifts every
graded module by \(d\); hence \([2]\simeq(d)=(1)^d\) as dg functors.  Transport
by \(\Psi_f^{-1}\) gives \(\rho_f^d\simeq[2]\).

The canonical bundle of \(\mathbb P\) is
\(\omega_{\mathbb P}\cong\mathcal O_{\mathbb P}(-(n+2))\), and adjunction
gives
\[
 \omega_{X_f}\cong\mathcal O_{X_f}(d-n-2)
 =\mathcal O_{X_f}(-a).
\]
Equivalently, the graded canonical module of \(R_f\) is
\(\omega_{R_f}\cong R_f(d-n-2)=R_f(-a)\).  Graded Auslander--Reiten duality on the
stable category of maximal Cohen--Macaulay \(R_f\)-modules gives, for two such
modules \(M\) and \(N\), the functorial quasi-isomorphism
\[
 \RHom_{R_f}(M,N)^\vee
 \simeq \RHom_{R_f}\bigl(N,M(-a)[n]\bigr).
\]
Thus the Serre functor on \(\Dsg^{\gr,\dgcat}(R_f)\) is \((-a)[n]\).  Transport
through \(\Psi_f\) yields \(S_f\simeq\rho_f^{-a}[n]\).
\end{proof}

\begin{lemma}
\label{lem:fano-intrinsic}
If \(\gcd(d,n+2)=1\), choose integers \(r,s\) satisfying
\(r(n+2-d)+sd=1\).  Then
\[
 \rho_f\cong S_f^{-r}[rn+2s].
\]
In particular every enhanced equivalence of the Kuznetsov components
intertwines their rotations.
\end{lemma}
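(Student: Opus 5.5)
The plan is to repeat the calculation of Proposition~\ref{prop:intrinsic-shift}, now on the Kuznetsov component and using the identities of Proposition~\ref{prop:fano-periodicity-serre}. First, the hypothesis \(\gcd(d,n+2)=1\) gives \(\gcd(d,a)=\gcd(d,n+2-d)=1\), so integers \(r,s\) with \(ra+sd=1\) exist. Second, we need that \(\rho_f\), \(S_f\) and \([1]\) commute up to canonical enhanced isomorphisms. The shift commutes strictly with every dg functor. The Serre functor commutes with any autoequivalence by uniqueness of the Serre bimodule, and \(\rho_f\) is an autoequivalence of \(\mathcal{K}u(X_f)\), since under \(\Psi_f\) it corresponds to \((1)\).

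The computation is then
\[
 S_f^{-r}[rn+2s]\simeq(\rho_f^{-a}[n])^{-r}[rn+2s]
 \simeq\rho_f^{ra}[-rn][rn+2s]=\rho_f^{ra}[2s]
 \simeq\rho_f^{ra}(\rho_f^{d})^{s}=\rho_f^{ra+sd}=\rho_f .
\]
Negative powers are taken in the group of dg autoequivalences, exactly as in the proof of Proposition~\ref{prop:intrinsic-shift}. Alternatively, one can transport that proposition through \(\Psi_f\): with \(u=-r\), \(v=s\) and \(W=n+2\), the Bézout relation \(u(d-W)+vd=ra+sd=1\) holds, so the formula there reads \(\rho_f\simeq S^{-r}[2s+rn]\) on \(\mathcal C_f\). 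Conjugating by \(\Psi_f\), which intertwines the Serre functors and satisfies \(\Psi_f\rho_f\simeq(1)\Psi_f\), gives the claimed isomorphism on the Kuznetsov component.

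For the last assertion, let \(\Phi\) be an enhanced equivalence \(\mathcal{K}u(X_f)_{\dgcat}\simeq_{\Mor}\mathcal{K}u(X_g)_{\dgcat}\). Here \(g\) has the same degree in the same \(\mathbb P^{n+1}\), so the same \(a,r,s\) serve for \(g\). The Serre bimodule gives a canonical isomorphism \(\Phi S_f\simeq S_g\Phi\). Iterating it \(-r\) times, with inverses for negative exponents, and using strict compatibility with shifts, yields \(\Phi\rho_f\simeq\Phi S_f^{-r}[rn+2s]\simeq S_g^{-r}[rn+2s]\Phi\simeq\rho_g\Phi\).

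The main obstacle is coherence at the enhanced level rather than the numerics. We must make sure that the isomorphisms \(\rho_f^d\simeq[2]\) and \(S_f\simeq\rho_f^{-a}[n]\) are isomorphisms of dg functors, that is, of bimodules, and not only of functors on the homotopy category. I would handle this by working with bimodules throughout. On \(\mathcal C_f\), \([2]\) and \((d)\) agree on the nose as dg functors, and the Serre bimodule is identified with the \((-a)[n]\)-twisted diagonal by graded Auslander--Reiten duality. Both identifications then transport through the Morita equivalence \(\Psi_f\) of Proposition~\ref{prop:fano-periodicity-serre}.
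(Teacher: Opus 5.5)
Your proposal is correct and follows essentially the paper's proof. You use Bézout with \(a=n+2-d\), rewrite \(\rho_f\) as \(S_f^{-r}[rn+2s]\) via the two identities of Proposition~\ref{prop:fano-periodicity-serre}, and then transport the Serre functor, \(\Phi S_f\simeq S_g\Phi\), to conclude \(\Phi\rho_f\simeq\rho_g\Phi\). The paper runs the same computation starting from \(\rho_f\) rather than from \(S_f^{-r}[rn+2s]\), and it does not spell out your additional remarks on commutation and enhanced coherence.
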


\begin{proof}
Since \(\gcd(d,n+2-d)=\gcd(d,n+2)=1\), such integers exist.  Using
Proposition~\ref{prop:fano-periodicity-serre},
\[
 \rho_f
 \cong\rho_f^{r(n+2-d)+sd}
 \cong(\rho_f^a)^r(\rho_f^d)^s
 \cong(S_f^{-1}[n])^r[2s]
\cong S_f^{-r}[rn+2s].
\]
Let \(\Phi:\mathcal{K}u(X_f)_{\dgcat}\to\mathcal{K}u(X_g)_{\dgcat}\) be an enhanced
equivalence.  Transport of the Serre pairing gives
\(\Phi S_f\simeq S_g\Phi\).  Therefore
\[
 \Phi\rho_f
 \simeq\Phi S_f^{-r}[rn+2s]
 \simeq S_g^{-r}[rn+2s]\Phi
 \simeq\rho_g\Phi,
\]
which is the required isomorphism of dg functors intertwining the rotations.
\end{proof}

\begin{remark}
If \(e=\gcd(d,n+2)>1\), the same calculation recovers only \(\rho_f^e\).
The Serre functor does not choose an \(e\)-th root.  Therefore
Theorem~\ref{thm:rotation-reconstruction} remains valid, but compatibility of an enhanced equivalence with the rotation functors
requires an additional argument.  Accordingly, the theorem in~\cite[Theorem~1.2]{Pirozhkov} is formulated using the
rotation functor, while the Hochschild--Serre method can apply under different
numerical assumptions.
\end{remark}

\begin{proposition}
\label{prop:fano-completed-cone}
Let \(X_f\) and \(X_g\) be smooth degree-\(d\) hypersurfaces in
\(\mathbb P^{n+1}\).  An enhanced equivalence
\(\Phi:\mathcal{K}u(X_f)\simeq_{\Mor}\mathcal{K}u(X_g)\)
intertwining the rotation functors induces a Morita equivalence
\[
 \Dsg^{\dgcat}(A_f)\simeq_{\Mor}\Dsg^{\dgcat}(A_g).
\]
Consequently \(A_f\cong A_g\).
\end{proposition}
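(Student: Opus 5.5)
The plan is to transport $\Phi$ to the graded matrix-factorization categories and then apply Proposition~\ref{prop:common-local}. Write $a=n+2-d>0$. Let $\Psi_f:\mathcal{K}u(X_f)_{\dgcat}\simeq_{\Mor}\mathcal C_f$ and $\Psi_g:\mathcal{K}u(X_g)_{\dgcat}\simeq_{\Mor}\mathcal C_g$ be the enhanced equivalences of \cite[Theorems~2.5(i) and~3.10]{Orlov} and \cite[Theorem~6.13]{BFK}. They are normalized, as in the proof of Proposition~\ref{prop:fano-periodicity-serre}, by isomorphisms $\Psi_f\rho_f\simeq(1)\Psi_f$ and $\Psi_g\rho_g\simeq(1)\Psi_g$. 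Set
\[
 F=\Psi_g\,\Phi\,\Psi_f^{-1}:\mathcal C_f\longrightarrow\mathcal C_g .
\]
This is an enhanced (Morita) equivalence, since it is a composite of three such equivalences.

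Next I will build the intertwining isomorphism $\eta:F(1)\xrightarrow{\sim}(1)F$. It is the composite
\[
 \Psi_g\Phi\Psi_f^{-1}(1)\simeq\Psi_g\Phi\rho_f\Psi_f^{-1}
 \simeq\Psi_g\rho_g\Phi\Psi_f^{-1}\simeq(1)\Psi_g\Phi\Psi_f^{-1}.
\]
The first step uses the normalization of $\Psi_f$, rewritten as $\Psi_f^{-1}(1)\simeq\rho_f\Psi_f^{-1}$. The second step uses the given isomorphism $\Phi\rho_f\cong\rho_g\Phi$. The third step uses the normalization of $\Psi_g$. Each piece is an isomorphism of enhanced functors, so $\eta$ is one too. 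This gives the pair $(F,\eta)$ required in Proposition~\ref{prop:common-local}.

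Then I will check the hypotheses of Proposition~\ref{prop:common-local}. The polynomials $f,g$ are homogeneous of the same degree $d$ in the same ring $P$, with all weights equal to $1$. Smoothness of $X_f$ and $X_g$ makes the affine cones smooth away from the vertex. Since $d\ge2$, we have $f,g\in\m^2$, so each vertex is a genuine singularity. The cone singularities are therefore isolated, and Proposition~\ref{prop:orbit-completion} applies to both $R_f$ and $R_g$.

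Proposition~\ref{prop:common-local} then gives $\Dsg^{\dgcat}(A_f)\simeq_{\Mor}\Dsg^{\dgcat}(A_g)$. Theorem~\ref{thm:hua-keller} applies because both rings have the same number of variables, $n+2$, and it gives $A_f\cong A_g$.

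The main obstacle I expect is the enhanced-level bookkeeping. The normalization $\Psi_f\rho_f\simeq(1)\Psi_f$ must hold as an isomorphism of dg functors, not only of triangulated functors. Otherwise the recursive cocycle $\eta_j$ in the proof of Proposition~\ref{prop:common-local} is not defined at the dg level. I will handle this by invoking the compatibility at the level of Fourier--Mukai kernels in \cite{BFK}. That compatibility should yield the isomorphism of dg bimodules directly, and inverses and composites of such isomorphisms remain enhanced.
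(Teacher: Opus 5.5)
Your proposal is correct and follows the paper's proof: you transport $\Phi$ through the normalized Orlov--BFK equivalences to $F=\Psi_g\Phi\Psi_f^{-1}$ with $F(1)\simeq(1)F$, and then the orbit construction, Proposition~\ref{prop:orbit-completion}, and Theorem~\ref{thm:hua-keller} give the result. The only difference is packaging: you cite Proposition~\ref{prop:common-local}, which already yields $A_f\cong A_g$ and so makes your separate appeal to Hua--Keller redundant, whereas the paper writes these steps out directly.
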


\begin{proof}
Choose the equivalences of
\cite[Theorems~2.5(i) and~3.10]{Orlov}, with the dg enhancements
of~\cite[Theorem~6.13]{BFK},
\(\Psi_f:\mathcal{K}u(X_f)_{\dgcat}\simeq_{\Mor}\mathcal C_f\) and
\(\Psi_g:\mathcal{K}u(X_g)_{\dgcat}\simeq_{\Mor}\mathcal C_g\), normalized by
\(\Psi_f\rho_f\simeq(1)\Psi_f\) and similarly for \(g\).  Then
\[
 F:=\Psi_g\Phi\Psi_f^{-1}:\mathcal C_f\simeq_{\Mor}\mathcal C_g
\]
admits an isomorphism \(F(1)\simeq(1)F\).  Applying the explicit construction
\eqref{eq:induced-orbit-functor} gives
\[
 \Perf_{\dgcat}(\mathcal C_f/(1))
 \simeq_{\Mor}\Perf_{\dgcat}(\mathcal C_g/(1)).
\]
The two instances of Proposition~\ref{prop:orbit-completion} identify the left
and right sides with \(\Dsg^{\dgcat}(A_f)\) and
\(\Dsg^{\dgcat}(A_g)\), respectively.

Smoothness of \(X_f\) and \(X_g\) implies that the two cone singularities are
isolated.  Both complete local rings have Krull dimension \(n+1\) and embedding
dimension \(n+2\).  Hence Theorem~\ref{thm:hua-keller} applies without a
Kn\"orrer stabilization and gives \(A_f\cong A_g\).  Equivalently,
Theorem~\ref{thm:kalck} permits only the addition of an even number of quadratic
variables; equality of embedding dimensions forces that number to be zero.
\end{proof}

\subsection{Weighted Fano hypersurfaces}
Fix positive weights \(w_0,\ldots,w_{n+1}\), put
\[
 P=\C[x_0,\ldots,x_{n+1}],\qquad \deg x_i=w_i,
 \qquad W=\sum_iw_i,
\]
and, for a weighted homogeneous \(f\in P_d\), define the hypersurface stack
\[
 X_f=[(V(f)\setminus\{0\})/\Gm]\subset\mathbb P(w_0,\ldots,w_{n+1}).
\]
The theorem for weighted Fano hypersurfaces will be used under the following
hypotheses:
\begin{enumerate}[label=\textup{(W\arabic*)},leftmargin=3em]
\item the affine cone is smooth away from the origin, equivalently \(f\) is
quasismooth;
\item \(f\in(x_0,\ldots,x_{n+1})^2\), so the presentation is not a linear
cone;
\item \(a=W-d>0\), so \(X_f\) is Fano;
\item the weight system and \(d\) are fixed when two hypersurfaces are
compared.
\end{enumerate}
The first two conditions make \(A_f\) a nontrivial isolated hypersurface
singularity.  The hypersurface stack is smooth, while its coarse space may
retain quotient singularities.  Unless stated otherwise, the categorical
assertions concern the stack.  Let \(\rho_f\) be the grading shift transported from
\(\Dsg^{\gr}(R_f)\) to \(\mathcal{K}u(X_f)\) by the equivalence in
\cite[Theorems~2.5(i) and~3.10]{Orlov}.

\begin{proposition}
There are isomorphisms of enhanced functors
\[
 \rho_f^d\cong[2],\qquad S_f\cong\rho_f^{-a}[n].
\]
If \(\gcd(d,W)=1\) and \(ra+sd=1\), then
\[
 \rho_f\cong S_f^{-r}[rn+2s].
\]
\end{proposition}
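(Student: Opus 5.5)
The proof will repeat the argument of Proposition~\ref{prop:fano-periodicity-serre}, now for weighted degrees, and then the Bezout step of Lemma~\ref{lem:fano-intrinsic}. I would fix the equivalence \(\Psi_f:\mathcal{K}u(X_f)_{\dgcat}\simeq_{\Mor}\mathcal C_f=\MF^{\gr,\dgcat}(P,f)\) of \cite[Theorems~2.5(i) and~3.10]{Orlov}, with the dg enhancement of \cite[Theorem~6.13]{BFK} for the weighted (equivariant) setting. It is normalized so that \(\Psi_f\rho_f\simeq(1)\Psi_f\); this is how \(\rho_f\) was defined just above. All three identities are then proved on \(\mathcal C_f\) and transported back through \(\Psi_f\). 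Conjugation by a dg equivalence preserves isomorphisms of enhanced functors, and by uniqueness of the Serre bimodule it carries the Serre functor of \(\mathcal C_f\) to \(S_f\).

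For periodicity, a graded factorization \(E_0\to E_1\to E_0(d)\) of the weighted homogeneous \(f\in P_d\) is shifted once by swapping the terms and negating one differential. Applying this twice gives back the same differentials with every graded module twisted by \(d\). This yields a strict dg isomorphism \([2]\cong(d)=(1)^d\), which is \eqref{eq:periodicity}, and hence \(\rho_f^d\cong[2]\).

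For the Serre functor, I will use \eqref{eq:canonical-module}. The graded canonical module of \(P\) is \(P(-W)\), so \(\omega_{R_f}\cong R_f(d-W)=R_f(-a)\). Since \(R_f\) has Krull dimension \(n+1\) and, by (W1), is an isolated singularity, graded Auslander--Reiten duality on graded maximal Cohen--Macaulay modules applies. It gives the functorial duality \(\RHom(M,N)^\vee\simeq\RHom(N,M(-a)[n])\), which is \eqref{eq:serre-full} with \(d-W=-a\). Transporting this gives \(S_f\cong\rho_f^{-a}[n]\).

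The main obstacle is this duality step: one has to make sure that Auslander--Reiten duality holds for weighted gradings and is realized at the enhanced level, not merely on homotopy categories. I would handle this by citing \eqref{eq:serre-full} together with \cite[Section~5.3]{KMVdB}, which treat arbitrary positive gradings. The enhanced statement then follows because the Serre functor of a smooth proper dg category is determined by its bimodule, so an isomorphism on \(H^0\)-level Serre data lifts.

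Finally, \(\gcd(a,d)=\gcd(W-d,d)=\gcd(W,d)=1\), so integers \(r,s\) with \(ra+sd=1\) exist. All the functors involved commute up to canonical dg isomorphisms, and negative powers are taken in the group of dg autoequivalences. Therefore
\[
\rho_f\cong(\rho_f^{a})^r(\rho_f^d)^s\cong(S_f^{-1}[n])^r[2s]\cong S_f^{-r}[rn+2s],
\]
exactly as in Lemma~\ref{lem:fano-intrinsic} and Proposition~\ref{prop:intrinsic-shift}.
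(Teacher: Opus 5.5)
Your proposal follows the paper's proof essentially step for step: transport through the Orlov--BFK equivalence normalized by \(\Psi_f\rho_f\simeq(1)\Psi_f\), periodicity from the matrix-factorization shift, the Serre functor from \(\omega_{R_f}\cong R_f(-a)\) and graded Auslander--Reiten duality, and the same B\'ezout computation (the paper also checks that the result is independent of the choice of \((r,s)\), which the statement does not require). One correction: the enhanced form of \(S_f\cong\rho_f^{-a}[n]\) does not follow from a general principle that \(H^0\)-level isomorphisms of Serre data lift, which is false in general. It follows because the duality \(\RHom(M,N)^\vee\simeq\RHom(N,M(-a)[n])\) is natural at the dg level, i.e.\ it is a quasi-isomorphism between the Serre bimodule and the bimodule of \((-a)[n]\), and this is the form in which the paper uses it.
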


\begin{proof}
The equivalence in~\cite[Theorems~2.5(i) and~3.10]{Orlov}, with its dg
enhancement in~\cite[Theorem~6.13]{BFK}, identifies
\(\rho_f\) with \((1)\).  The
calculation in the matrix-factorization model gives \((1)^d\simeq[2]\).
Furthermore, the canonical bundle of
\(\mathbb P(w_0,\ldots,w_{n+1})\) is
\(\mathcal O_{\mathbb P(w_0,\ldots,w_{n+1})}(-W)\).  Adjunction, equivalently
the graded hypersurface resolution, gives
\(\omega_{R_f}\cong R_f(d-W)=R_f(-a)\).  Graded
Auslander--Reiten duality therefore
identifies the Serre functor with \((-a)[n]\), and transport through the same
equivalence gives \(S_f\simeq\rho_f^{-a}[n]\).

Since \(\gcd(a,d)=\gcd(W-d,d)=\gcd(W,d)=1\), choose \(r,s\) with
\(ra+sd=1\).  In the group of enhanced autoequivalences,
\begin{align*}
 S_f^{-r}[rn+2s]
 &\simeq(\rho_f^{-a}[n])^{-r}[rn+2s]\\
 &\simeq\rho_f^{ra}[2s]
 \simeq\rho_f^{ra+sd}=\rho_f.
\end{align*}
If \((r',s')=(r+kd,s-ka)\), then composing the expression for \((r',s')\) with the inverse of that
for \((r,s)\) gives
\[
 S_f^{-kd}[kdn-2ka]
 \simeq \rho_f^{kad}[-kdn+kdn-2ka]
 \simeq \rho_f^{kad}[ -2ka]
 \simeq \id,
\]
where \([2ka]\simeq\rho_f^{kad}\).  Thus different choices of the B\'ezout coefficients give
isomorphic enhanced functors.
\end{proof}

The coprimality condition is not a hypothesis of
Theorems~\ref{thm:hua-keller} or~\ref{thm:kalck}.  It is used earlier and only
to recover the degree shift from the abstract Kuznetsov component.  If an
equivalence is assumed to commute with \(\rho\), the weighted argument works
without coprimality.

\subsection{Proofs for Fano hypersurfaces}
\begin{proof}[Proof of Theorem~\ref{thm:rotation-reconstruction}]
Proposition~\ref{prop:fano-completed-cone} first gives a Morita equivalence of
the completed singularity categories, and Theorem~\ref{thm:hua-keller} then
gives an isomorphism \(A_f\cong A_g\).  Applying
Lemma~\ref{lem:ordinary-globalization} to this
complete local algebra isomorphism yields \(L\in\operatorname{GL}_{n+2}(\C)\)
and \(c\in\C^*\) satisfying \(g(Lx)=cf(x)\).  Hence
\([L](V(f))=V(g)\).  The completed singularity is determined before the reconstruction of the projective hypersurface, as asserted.
\end{proof}

\begin{proof}[Proof of Corollary~\ref{thm:fano-ordinary}]
Let \(\Phi:\mathcal{K}u(X_f)_{\dgcat}\simeq_{\Mor}\mathcal{K}u(X_g)_{\dgcat}\).  By
Lemma~\ref{lem:fano-intrinsic}, the fixed B\'ezout pair \((r,s)\) gives
\[
 \Phi\rho_f\simeq\Phi S_f^{-r}[rn+2s]
 \simeq S_g^{-r}[rn+2s]\Phi\simeq\rho_g\Phi.
\]
Thus \(\Phi\) satisfies the hypothesis of
Theorem~\ref{thm:rotation-reconstruction}, which gives the claimed equation and
projective isomorphism.
\end{proof}

\begin{proof}[Proof of Theorem~\ref{thm:fano-weighted}]
Let \(\Phi:\mathcal{K}u(X_f)_{\dgcat}\simeq_{\Mor}\mathcal{K}u(X_g)_{\dgcat}\).  The weighted
equivalences in \eqref{eq:orlov-fano} transport \(\Phi\) to
\(F:\mathcal C_f\simeq_{\Mor}\mathcal C_g\).  The preceding proposition and
the same B\'ezout coefficients give \(F\rho_f\simeq\rho_gF\).  All hypotheses of
Theorem~\ref{thm:common-weighted} hold by quasismoothness and the assumption that neither presentation is a linear cone.  It follows that there are
\(u\in\operatorname{Aut}_{\gr}(P)\) and \(c\in\C^*\) with \(u(g)=cf\), and the
induced \(\Gm\)-equivariant automorphism of the punctured affine space descends
to \(X_f\cong X_g\).
\end{proof}

\begin{proof}[Proof of Corollary~\ref{cor:index-one}]
For \(d=n+1\) in \(\mathbb P^{n+1}\), the Fano index is \(a=1\), and the
decomposition in~\cite[Theorems~2.5(i) and~3.10]{Orlov} is
\[
 \Db(X_f)=\langle\mathcal{K}u(X_f),\mathcal O_{X_f}\rangle,
 \qquad \mathcal{K}u(X_f)=\mathcal O_{X_f}^{\perp}.
\]
Moreover \(\gcd(n+1,n+2)=1\), and \eqref{eq:fano-intrinsic} becomes
\[
 \rho_f\simeq S_{\mathcal{K}u(X_f)}^{-1}[n].
\]
If \(\Phi:\mathcal O_{X_f}^{\perp}\simeq_{\Mor}
\mathcal O_{X_g}^{\perp}\), then transport of the Serre bimodule gives
\(\Phi S_f\simeq S_g\Phi\), and consequently
\[
 \Phi\rho_f\simeq\Phi S_f^{-1}[n]
 \simeq S_g^{-1}[n]\Phi\simeq\rho_g\Phi.
\]
Theorem~\ref{thm:rotation-reconstruction} gives \(X_f\cong X_g\), and the
orbit construction in Proposition~\ref{prop:fano-completed-cone} gives
\(A_f\cong A_g\).

To identify the complete local ring among all such singularities, let \(S\) be any complete local
commutative Noetherian \(\C\)-algebra whose dg singularity category is Morita
equivalent to the category obtained from the dg orbit of
\(\mathcal O_{X_f}^{\perp}\).  Theorem~\ref{thm:kalck} gives a hypersurface
presentation of \(S\) and, after ordering the presentations by
embedding dimension, an integer \(r\ge0\) such that its equation is formally
right equivalent to
\[
 f+z_1^2+\cdots+z_{2r}^2.
\]
If \(\dim S=\dim A_f=n+1\), the two hypersurface presentations have the same
embedding dimension, hence \(2r=0\), and \(S\cong A_f\).
\end{proof}

\begin{remark}
The polarized projective hypersurface determines its affine cone.  The
categorical argument first recovers the formal germ at the vertex.
Theorem~\ref{thm:kalck} identifies this germ among complete local commutative
singularities, up to the addition of an even number of nondegenerate
quadratic variables.  Comparing the lowest-degree terms of the homogeneous
equations then recovers the projective hypersurface.
\end{remark}

\begin{proof}[Proof of Corollary~\ref{cor:double-covers}]
Write the double cover as
\[
 X_F=\{z^2-F_{2n}(x_0,\ldots,x_n)=0\}
 \subset\mathbb P(1^{n+1},n).
\]
Here \(W=2n+1\), \(d=2n\), and \(\gcd(d,W)=1\), so
Theorem~\ref{thm:fano-weighted} applies.  A weight-preserving automorphism has
\[
 x\longmapsto Lx,\qquad z\longmapsto\alpha z+h_n(x),
\]
with \(L\in\operatorname{GL}_{n+1}(\C)\) and \(\alpha\ne0\).  If it carries
\(z^2-G(x)\) to a scalar multiple of \(z^2-F(x)\), then for some
\(c\in\C^*\)
\[
 (\alpha z+h_n(x))^2-G(Lx)=c\bigl(z^2-F(x)\bigr).
\]
Comparison of the \(z^2\)-coefficient gives \(c=\alpha^2\), while comparison
of the \(z\)-coefficient gives \(2\alpha h_n=0\).  Since the characteristic is
zero and \(\alpha\ne0\), one has \(h_n=0\).  The term independent of \(z\)
then gives \(G(Lx)=\alpha^2F(x)\).  Thus \([L]\) carries \(V(F)\) to \(V(G)\),
and the corresponding equivariant map of the two weighted hypersurfaces is an
isomorphism of the double covers.
\end{proof}

For comparison with the local reconstruction, the Tjurina algebra can be
computed directly:
\[
 T_{z^2-F}
 =\frac{\C[x_0,\ldots,x_n,z]}
 {(z^2-F,2z,\partial_0F,\ldots,\partial_nF)}
 \cong
 \frac{\C[x_0,\ldots,x_n]}
 {(\partial_0F,\ldots,\partial_nF)}.
\]
The normalized Euler identity removes \(F\) in the last quotient.  Thus the
equivariant local algebra records exactly the graded Jacobian algebra of the
branch equation.

\begin{remark}
Both the ordinary and weighted arguments pass through the full category
\(\mathcal C_f\) and use the same comparison between the dg orbit and the singularity category of the completed cone as the
argument for hypersurfaces of general type.  No separate calculation of
the components of the Hochschild--Serre algebra corresponding to the Jacobian algebra is needed.  For a weighted
hypersurface one must retain, in addition, the dual torus action, which corresponds to the Euler action.
\end{remark}

\section{Hypersurfaces of general type}\label{sec:general-type}

\subsection{The derived category and the graded singularity category}
Assume \(b=d-W>0\).  By~\cite[Theorem~2.5(ii), Proposition~2.17,
and Theorem~3.10]{Orlov}, there is a fully faithful functor from
\(\Db(X_f)\) to \(\mathcal C_f\) whose orthogonal is generated by \(b\)
exceptional objects.  For ordinary hypersurfaces, this is precisely
\cite[Theorem~3.11(ii)]{Orlov}; the weighted case is explained
in~\cite[Remark~3.12]{Orlov}.
Thus \(\Db(X_f)\) is an admissible component of \(\mathcal C_f\).  In
particular, the
theorem for a hypersurface of general type must start with
\[
 \mathcal C_f=\MF^{\gr,\dgcat}(P,f),
\]
equipped with the internal grading shift
\(\rho_f=(1):\mathcal C_f\to\mathcal C_f\) and its dg orbit.

The categorical identities now read
\begin{equation}\label{eq:general-identities}
 \rho_f^d\simeq[2],\qquad
 S_{\mathcal C_f}\simeq\rho_f^b[n].
\end{equation}
If \(\gcd(d,W)=1\) and \(ub+vd=1\), then
\begin{equation}\label{eq:general-intrinsic}
 \rho_f\simeq S_{\mathcal C_f}^{u}[2v-un].
\end{equation}
This is formula~\eqref{eq:intrinsic-shift} with \(d-W>0\).

\begin{proposition}
\label{prop:general-serre}
On the full graded matrix-factorization category one has
\[
 S_{\mathcal C_f}\cong\rho_f^{d-W}[n].
\]
If \(\gcd(d,W)=1\), choose \(u,v\in\mathbb Z\) with
\(u(d-W)+vd=1\).  Then
\[
 \rho_f\cong S_{\mathcal C_f}^{u}[2v-un].
\]
Consequently every dg Morita equivalence
\(F:\mathcal C_f\simeq_{\Mor}\mathcal C_g\), for the same weights and degree,
automatically intertwines the internal shifts:
\(F\rho_f\cong\rho_gF\).
\end{proposition}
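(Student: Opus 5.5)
The plan is to reduce the statement to facts already established in Section~\ref{sec:preliminaries}: the Serre functor formula \eqref{eq:serre-full} and the formal group-theoretic calculation in Proposition~\ref{prop:intrinsic-shift}. The new ingredient for \(d>W\) is only the sign of \(b=d-W\), and nothing in the earlier argument used a sign.

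\emph{First step: the Serre functor.} I will use the graded canonical module \eqref{eq:canonical-module}, \(\omega_{R_f}\cong R_f(d-W)=R_f(b)\), coming from the Koszul resolution \(0\to P(-d)\to P\to R_f\to0\) and \(\omega_P\cong P(-W)\). The ring \(R_f\) is Gorenstein of Krull dimension \(n+1\) with an isolated singularity at \(\m\), by quasismoothness. Graded Auslander--Reiten duality on the stable category of graded maximal Cohen--Macaulay modules then gives a functorial quasi-isomorphism
\[
 \RHom(M,N)^\vee\simeq\RHom\bigl(N,M\otimes\omega_{R_f}[n]\bigr)=\RHom\bigl(N,M(b)[n]\bigr).
\]
Because \(\mathcal C_f\) is smooth and proper, its Serre functor is represented by a unique bimodule. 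The duality is natural in both variables at the dg level, since it comes from local duality applied to Hom complexes of matrix factorizations. Hence the Serre bimodule is that of \((b)[n]\), which gives \(S_{\mathcal C_f}\cong\rho_f^{b}[n]\). This is exactly \eqref{eq:serre-full}, cited from \cite[Section~5.3]{KMVdB}, and it is the step I expect to be the main obstacle. The concern is to check that the dg-enhanced version holds, not only the triangulated one. I would verify this in the matrix-factorization model, where both sides are explicit dg functors and the duality pairing is a trace/residue map that commutes with the dg structure.

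\emph{Second step: the Bezout formula.} Periodicity \eqref{eq:periodicity} gives \(\rho_f^d\simeq[2]\). The functors \(\rho_f\), \([1]\) and \(S_{\mathcal C_f}\) commute up to canonical dg isomorphisms. Since \(\gcd(b,d)=\gcd(d,W)=1\), I choose \(u,v\) with \(ub+vd=1\) and compute exactly as in Proposition~\ref{prop:intrinsic-shift}:
\[
 S_{\mathcal C_f}^u[2v-un]\simeq\rho_f^{ub}[un][2v-un]\simeq\rho_f^{ub}\rho_f^{vd}\simeq\rho_f.
\]
I will also note that changing the pair to \((u+kd,\,v-kb)\) changes the right-hand side by \(\rho_f^{kbd}[-2kb]\simeq\id\). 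So the resulting functor does not depend on the choice of \(u,v\).

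\emph{Third step: intertwining.} A Morita equivalence \(F\) of smooth proper dg categories transports the Serre bimodule, which yields \(FS_{\mathcal C_f}\simeq S_{\mathcal C_g}F\). It commutes strictly with \([1]\). Since \(g\) has the same weights and degree, the same \(u,v\) apply to \(\mathcal C_g\). Composing these isomorphisms gives
\[
 F\rho_f\simeq FS_{\mathcal C_f}^u[2v-un]\simeq S_{\mathcal C_g}^u[2v-un]F\simeq\rho_gF.
\]
For negative \(u\), I use the inverse of the Serre compatibility isomorphism, working in the group of dg autoequivalences.
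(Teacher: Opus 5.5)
Your proposal is correct and follows the paper's proof step for step. Like the paper, you derive $\omega_{R_f}\cong R_f(d-W)$ from the hypersurface resolution and apply graded Auslander--Reiten duality in dimension $n+1$ to get $S_{\mathcal C_f}\simeq\rho_f^{d-W}[n]$, then run the same B\'ezout computation using $\rho_f^d\simeq[2]$ and transport the Serre bimodule to obtain $F\rho_f\simeq\rho_gF$; your remark on independence of the B\'ezout pair is correct but not needed here.
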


\begin{proof}
Put
\(\mathbb P_w=\mathbb P(w_0,\ldots,w_{n+1})\).  Its canonical bundle is
\(\omega_{\mathbb P_w}\cong\mathcal O_{\mathbb P_w}(-W)\), and adjunction
gives \(\omega_{X_f}\cong\mathcal O_{X_f}(d-W)\).  For the graded coordinate ring, applying \(\operatorname{Ext}^1_P(-,P(-W))\) to
\[
 0\longrightarrow P(-d)\xrightarrow{\cdot f}P
 \longrightarrow R_f\longrightarrow0
\]
gives \(\omega_{R_f}\cong R_f(d-W)\).  Since \(\dim R_f=n+1\), graded
Auslander--Reiten duality gives, for objects \(M,N\) of the graded singularity
category, functorial quasi-isomorphisms
\[
 \RHom_{R_f}(M,N)^\vee
 \simeq
 \RHom_{R_f}\bigl(N,M(d-W)[n]\bigr)
\]
in the graded singularity category.  Hence
\(S_{\mathcal C_f}\simeq\rho_f^{d-W}[n]\).

Let \(u(d-W)+vd=1\).  The periodicity \(\rho_f^d\simeq[2]\) gives
\begin{align*}
 S_{\mathcal C_f}^{u}[2v-un]
 &\simeq(\rho_f^{d-W}[n])^u[2v-un]\\
 &\simeq\rho_f^{u(d-W)}[2v]
 \simeq\rho_f^{u(d-W)+vd}=\rho_f.
\end{align*}
If \(F:\mathcal C_f\simeq_{\Mor}\mathcal C_g\), transport of the Serre
bimodule gives
\(F S_{\mathcal C_f}\simeq S_{\mathcal C_g}F\); applying \(F\) to the
displayed intrinsic formula gives \(F\rho_f\simeq\rho_gF\).  Finally
\(\gcd(d,d-W)=\gcd(d,W)\), so such integers exist under the stated
coprimality hypothesis.
\end{proof}

\begin{corollary}
\label{cor:general-local}
Under the hypotheses of Proposition~\ref{prop:general-serre}, every Morita
equivalence \(F:\mathcal C_f\simeq_{\Mor}\mathcal C_g\) induces a Morita
equivalence
\[
 \Dsg^{\dgcat}(A_f)\simeq_{\Mor}\Dsg^{\dgcat}(A_g).
\]
\end{corollary}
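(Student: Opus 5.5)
The plan is to combine Proposition~\ref{prop:general-serre} with Proposition~\ref{prop:common-local}, so that no new input is needed.

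First, fix a Morita equivalence \(F:\mathcal C_f\simeq_{\Mor}\mathcal C_g\) and represent it by a dg quasi-equivalence between cofibrant Morita-fibrant models, as in the conventions. Proposition~\ref{prop:general-serre} gives an enhanced isomorphism \(\eta:F\rho_f\xrightarrow{\sim}\rho_gF\). It is obtained from the canonical isomorphism \(FS_{\mathcal C_f}\simeq S_{\mathcal C_g}F\), which comes from transporting the Serre bimodule. One takes its \(u\)-th power and composes with the strict compatibility of \(F\) with \([2v-un]\). Here \(u,v\) are fixed B\'ezout coefficients, chosen once for both \(f\) and \(g\); this is possible because the weights and degree agree. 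I would note that \(\eta\) is a genuine enhanced natural isomorphism, not merely an isomorphism on \(H^0\). This holds because the Serre functor is represented by a bimodule and the transport takes place in the homotopy category of bimodules.

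Second, feed \((F,\eta)\) into Proposition~\ref{prop:common-local}. This requires its hypotheses:
\begin{enumerate}[label=(\roman*)]
\item \(f\) and \(g\) are weighted homogeneous of the same degree in the same weighted polynomial ring;
\item the cone singularities are isolated.
\end{enumerate}
Condition (ii) follows from quasismoothness (or smoothness, in the ordinary case) together with \(f,g\in\m^2\), which says the presentations are not linear cones. Under these conditions, \(\Spec R_f\) is smooth off \(\m\) and singular at \(\m\), so Proposition~\ref{prop:orbit-completion} applies to both \(R_f\) and \(R_g\).

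The cocycle construction \eqref{eq:induced-orbit-functor} then produces the orbit functor \(F_{\mathrm{orb}}\). Its perfect hull is a Morita equivalence \(\Perf_{\dgcat}(\mathcal C_f/\rho_f)\simeq_{\Mor}\Perf_{\dgcat}(\mathcal C_g/\rho_g)\). Composing with the two instances of \eqref{eq:orbit-completion} yields \(\Dsg^{\dgcat}(A_f)\simeq_{\Mor}\Dsg^{\dgcat}(A_g)\). Here \eqref{eq:mf-sg} identifies \(\mathcal C_f\) with \(\Dsg^{\gr,\dgcat}(R_f)\) compatibly with \((1)\).

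The only delicate point is making sure the iterates \(\eta_j\) for negative \(j\) are coherent. This is needed so that \(F_{\mathrm{orb}}\) is strictly a dg functor and a quasi-inverse produces an inverse orbit functor. I would handle it exactly as in the proof of Proposition~\ref{prop:common-local}: define \(\eta_{-1}\) from \(\eta^{-1}\) and then verify the cocycle identity. Everything else is formal, and the sign of \(d-W\) plays no role.
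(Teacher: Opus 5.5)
Your proposal is correct and is essentially the paper's argument. The paper likewise takes \(\eta\) from Proposition~\ref{prop:general-serre}, builds the orbit functor by \eqref{eq:induced-orbit-functor} with a quasi-inverse supplying the inverse, and identifies both perfect hulls with \(\Dsg^{\dgcat}(A_f)\) and \(\Dsg^{\dgcat}(A_g)\) via Proposition~\ref{prop:orbit-completion}; routing this through Proposition~\ref{prop:common-local}, whose proof is exactly that construction, changes nothing of substance.
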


\begin{proof}
Proposition~\ref{prop:general-serre} gives an enhanced natural isomorphism
\(\eta:F\rho_f\xrightarrow{\sim}\rho_gF\).  Its coherent iterates define the
dg orbit functor by \eqref{eq:induced-orbit-functor}.  Applying the same
construction to a quasi-inverse of \(F\) proves
\[
 \Perf_{\dgcat}(\mathcal C_f/\rho_f)
 \simeq_{\Mor}\Perf_{\dgcat}(\mathcal C_g/\rho_g).
\]
The two equivalences of
Proposition~\ref{prop:orbit-completion} identify these terms with
\(\Dsg^{\dgcat}(A_f)\) and \(\Dsg^{\dgcat}(A_g)\), respectively.
\end{proof}

\subsection{Proofs for hypersurfaces of general type}
\begin{proof}[Proof of Theorem~\ref{thm:general-ordinary}]
Let \(F:\mathcal C_f\simeq_{\Mor}\mathcal C_g\) be the equivalence in the
hypothesis.  Since \(W=n+2\), Proposition~\ref{prop:general-serre} and the
coprimality hypothesis give \(F\rho_f\simeq\rho_gF\).  Corollary
\ref{cor:general-local} and Theorem~\ref{thm:hua-keller} give, respectively,
\[
 \Dsg^{\dgcat}(A_f)\simeq_{\Mor}\Dsg^{\dgcat}(A_g),
 \qquad A_f\cong A_g.
\]
The two complete rings have equal Krull and embedding dimensions, so no
Kn\"orrer stabilization occurs.  Lemma~\ref{lem:ordinary-globalization}
applied to \(A_f\cong A_g\) yields
\(L\in\operatorname{GL}_{n+2}(\C)\) and \(c\in\C^*\) such that
\(g(Lx)=cf(x)\).  Therefore \([L]\) restricts to an isomorphism
\(X_f\xrightarrow{\sim}X_g\).
\end{proof}

\begin{proof}[Proof of Theorem~\ref{thm:general-weighted}]
Let \(F:\mathcal C_f\simeq_{\Mor}\mathcal C_g\).  Proposition
\ref{prop:general-serre} produces an enhanced isomorphism
\(F\rho_f\simeq\rho_gF\).  The induced orbit functor
\eqref{eq:induced-orbit-functor} preserves every exponent \(j\), so
Proposition~\ref{prop:equivariant-orbit} gives an Euler-equivariant Morita
equivalence
\[
 \Dsg^{\dgcat}(A_f)\simeq_{\Mor}\Dsg^{\dgcat}(A_g).
\]
By Lemma~\ref{lem:equivariant-HK}, the induced \(\HH^0\)-isomorphism is an
isomorphism \(T_f\cong T_g\) of graded algebras.  The identities
\[
 \deg(f)\,f=\sum_iw_ix_i\partial_i f,
 \qquad
 \deg(g)\,g=\sum_iw_ix_i\partial_i g
\]
identify these with the graded Milnor algebras.  Proposition
\ref{prop:weighted-globalization} gives
\(u\in\operatorname{Aut}_{\gr}(P)\) and \(c\in\C^*\) with \(u(g)=cf\).
The induced \(\Gm\)-equivariant automorphism of the punctured affine space
descends to an isomorphism of quotient hypersurface stacks and, functorially,
to an isomorphism of their coarse spaces.
\end{proof}

For a weighted hypersurface, no new local reconstruction theorem is needed.
We use the equivariance of the isomorphism between the Tjurina algebra
and degree-zero Hochschild cohomology: the dual
action on the orbit becomes the Euler action on \(A_f\), and naturality of the
multiplication classes makes \(T_f\cong T_g\) an isomorphism of graded
algebras.  The weighted conclusion is then obtained from
Theorem~\ref{thm:hua-keller} and the homogeneous reconstruction result
\cite[Corollary~3]{Ahmed}.

\section{Comparison with the Hochschild--Serre method}
\label{sec:comparison}

\subsection{The grading shift and the Hochschild--Serre algebra}
For a smooth proper dg category \(\mathcal C\) with Serre functor \(S\), set
\[
 \HS^{p,q}(\mathcal C)=
 \Hom_{\operatorname{Fun}(\mathcal C,\mathcal C)}
     (\id_{\mathcal C},S^q[p]),
 \qquad
 \HS(\mathcal C)=\bigoplus_{p,q\in\mathbb Z}\HS^{p,q}(\mathcal C),
\]
with multiplication by twisted composition.  Under the assumption
\(\gcd(d,W)=1\),
\(\rho\) is a composition of powers of \(S\) and \([1]\), so the spaces involving
powers of \(\rho\) lie on an intrinsic diagonal of \(\HS(\mathcal C)\).  Both
methods therefore use the same categorical polarization and both use the associated
multiplicative structure.

For example, suppose \(d<W\), put \(a=W-d\), and choose integers \(r,s\)
satisfying \(ra+sd=1\).  Then \(\rho\cong S^{-r}[rn+2s]\), and hence
\begin{equation}\label{eq:HS-diagonal}
 \Hom_{\operatorname{Fun}(\mathcal C,\mathcal C)}
 \bigl(\id,\rho^m[\ell]\bigr)
 =\HS^{m(rn+2s)+\ell,-rm}(\mathcal C).
\end{equation}
The composition on the left is exactly the twisted composition inherited from
\eqref{eq:HS-product}.  Thus both methods use these composition products, which contain more
information than the dimensions of the corresponding vector spaces.

The Hochschild--Serre method selects bidegrees and calculates them through
equivariant matrix-factorization kernels.  For suitable bidegrees, these calculations recover the
Jacobian algebra or a specified Veronese subalgebra
\cite{BFK,LinZhang,LinRennemoZhang}.  The present method uses all
powers of \(\rho\) to form the dg orbit, identifies its perfect hull with the
completed singularity category, and invokes
\(\HH^0(\Dsg^{\dgcat}(A_f))\cong T_f\).  For a weighted hypersurface, the dual action
on the orbit retains the grading that the Hochschild--Serre bigrading records.

The methods are complementary.  If \(\gcd(d,W)>1\), the Serre functor does not
choose a root \(\rho\), while a suitable Hochschild--Serre diagonal may still
recover a Veronese subalgebra sufficient for reconstruction.  Conversely, once \(\rho\) is intrinsic
or included in the data, the singularity-category method gives a short
reconstruction and determines the formal
singularity type.

\subsection{The index-one orthogonal category}
For a degree-\((n+1)\) hypersurface in \(\mathbb P^{n+1}\), the identification
of the right orthogonal with the Kuznetsov component,
\(\mathcal O_X^\perp=\mathcal{K}u(X)\), together with the formula
\(\rho=S^{-1}[n]\), shows that the
dg orbit is constructed from \(\mathcal O_X^\perp\) alone.  In
Hochschild--Serre language, all spaces
\[
 \Hom(\id,\rho^m[\ell])
 =\HS^{mn+\ell,-m}(\mathcal O_X^\perp)
\]
lie on an intrinsic diagonal.  The Hochschild--Serre method reconstructs the
Jacobian algebra from suitable pieces of this diagonal~\cite{LinZhang}; the
present method takes the orbit determined by the same diagonal and invokes
Theorem~\ref{thm:hua-keller}.  Thus the proof for Fano hypersurfaces uses the same intrinsic grading
shift as the earlier Hochschild--Serre argument and reduces reconstruction
to the dg singularity category.

\end{document}